\documentclass[a4paper, 10pt]{amsart}
\usepackage[utf8]{inputenc}
\usepackage[english]{babel}
\usepackage{hyperref}
\usepackage{amsmath, amsthm, amssymb, amsfonts}
\usepackage[foot]{amsaddr}
\usepackage{mathtools}
\usepackage{graphicx}
\usepackage{caption}
\usepackage{subcaption}
\usepackage{bm}
\usepackage{mathrsfs}
\usepackage{nicefrac}
\usepackage{tikz}
\hypersetup{colorlinks=true, linkcolor=blue, citecolor=blue}

\textwidth=14cm \oddsidemargin=1cm \evensidemargin=1cm

\newtheorem{theorem}{Theorem}[section]

\newtheorem{proposition}[theorem]{Proposition}
\newtheorem{lemma}[theorem]{Lemma}

\theoremstyle{remark}
\newtheorem{remark}[theorem]{Remark}

\numberwithin{equation}{section}

\DeclareMathOperator{\image}{im}

\DeclareMathOperator{\dist}{dist}
\DeclareMathOperator{\id}{id}
\DeclareMathOperator{\imaginary}{Im}
\DeclareMathOperator{\real}{Re}

\newcommand{\R}{\mathbb{R}}
\newcommand{\Z}{\mathbb{Z}}
\newcommand{\N}{\mathbb{N}}
\newcommand{\C}{\mathbb{C}}
\newcommand{\ptorus}{\mathbb{S}_P}
\newcommand{\setsep}{\,;\,}
\newcommand{\doublehookrightarrow}{\mathrel{\mathrlap{{\mspace{4mu}\lhook}}{\hookrightarrow}}}
\newcommand{\jap}[2] {\langle#1\rangle^{#2}}
\newcommand{\bigjap}[2] {\big\langle#1\big\rangle^{#2}}
\newcommand{\biggjap}[2] {\bigg\langle#1\bigg\rangle^{#2}}
\newcommand{\bessel}[1]{\Lambda^{#1}}
\newcommand{\besselkernel}[1]{K_{#1}}
\newcommand{\besselkernelreg}[1]{J_{#1}}
\newcommand{\pbesselkernel}[1]{K_{P, #1}}
\newcommand{\FT}{\mathscr{F}}
\newcommand{\IFT}{\mathscr{F}^{-1}}
\newcommand{\holderspace}[1]{C^{#1}}
\newcommand{\holderspaceeven}[1]{C^{#1}_{\textnormal{even}}}
\newcommand{\zygmundspace}[1]{\mathcal{C}^{#1}}
\newcommand{\zygmundspaceeven}[1]{\mathcal{C}^{#1}_{\textnormal{even}}}
\newcommand{\schwartzspace}{\mathcal{S}}
\newcommand{\schwartzspacedual}{\mathcal{S}'}
\newcommand{\contspace}{BUC}
\newcommand{\contderspace}[1]{BUC^{#1}}

\newcommand{\norm}[1]{\|#1\|}
\newcommand{\seminorm}[1]{[#1]}
\newcommand{\diffop}[1]{\Delta_{#1}}
\renewcommand{\d}[1]{d#1}
\newcommand{\slot}{\,}
\newcommand{\set}[1]{\{#1\}}
\newcommand{\bigset}[1]{\big\{#1\big\}}
\newcommand{\lebesguespace}[1]{L^{#1}}
\newcommand{\bigparanth}[1]{\big(#1\big)}
\newcommand{\biggparanth}[1]{\bigg(#1\bigg)}
\newcommand{\integer}[1]{\lfloor#1\rfloor}
\newcommand{\fraction}[1]{\{#1\}}
\newcommand{\intconst}{\kappa}
\newcommand{\bigo}[1]{O(#1)}
\newcommand{\fouriercoeff}[2]{\hat{#1}_{#2}}
\newcommand{\abs}[1]{|#1|}
\newcommand{\bigabs}[1]{\big|#1\big|}

\newcommand{\wavespeed}{\mu}
\newcommand{\conv}{*}
\newcommand{\localbifurcationcurve}{\mathcal{R}}
\newcommand{\globalbifurcationcurve}{\mathfrak{R}}
\newcommand{\submaxset}{U}
\newcommand{\solutionset}{S}
\newcommand{\varphicoeff}[1]{\varphi_{#1}}

\newcommand{\wavespeedcoeff}[1]{\wavespeed_{#1}}
\newcommand{\cone}{\mathcal{K}}
\newcommand{\bifurcationfuncfDP}{G}
\newcommand{\trivialfuncfDP}{\widetilde{G}}
\newcommand{\constsol}{\gamma}
\newcommand{\shiftedsol}{\phi}
\newcommand{\localbifurcationcurvefDP}{\mathcal{Q}}
\newcommand{\localshiftedcurvefDP}{\widetilde{\mathcal{Q}}}
\newcommand{\globalbifurcationcurvefDP}{\mathfrak{Q}}
\newcommand{\submaxsetfDP}{V}
\newcommand{\solutionsetfDP}{W}
\newcommand{\perturbedsubmaxsetfDP}{\widetilde{V}}
\newcommand{\perturbedsolutionsetfDP}{\widetilde{W}}
\newcommand{\bifurcationpoint}{\wavespeed_{P, k}^*}
\newcommand{\eqindent}{\quad}

\newcommand{\remainder}[2]{R_{#1}^{#2}}
\newcommand{\boundary}{\partial}
\newcommand{\altsymbol}{m}
\newcommand{\altoperator}[1]{\widetilde{\Lambda}^{#1}}
\newcommand{\altkernel}[1]{\widetilde{K}_{#1}}
\newcommand{\paltkernel}[1]{\widetilde{K}_{P, #1}}
\newcommand{\constsolpos}{\gamma_+}
\newcommand{\constsolneg}{\gamma_-}

\title[Fractional Korteweg--De Vries and Degasperis--Procesi equations]{Highest waves for fractional Korteweg--De Vries and Degasperis--Procesi equations}

\date{}
\author[M. C. {\O}rke]{Magnus C. {\O}rke}
\email[]{magnusco@math.uio.no}

\begin{document}

\begin{abstract}
    We study traveling waves for a class of fractional Korteweg--De Vries and fractional Degasperis--Procesi equations with a parametrized Fourier multiplier operator of order $-s \in (-1, 0)$. For both equations there exist local analytic bifurcation branches emanating from a curve of constant solutions, consisting of smooth, even and periodic traveling waves. The local branches extend to global solution curves. In the limit we find a highest, cusped traveling-wave solution and prove its optimal $s$-Hölder regularity, attained in the cusp.
\end{abstract}

\maketitle

\section{Introduction} \label{sec:introduction}

We consider a class of fractional Korteweg--De Vries (fKdV) equations of the form
\begin{equation} \label{eq:main_fKdV}
    u_t + u u_x + (\bessel{-s} u)_x = 0, \qquad s \in (0, 1),
\end{equation}
and a class of fractional Degasperis--Procesi (fDP) equations similarly given by
\begin{equation} \label{eq:main_fDP}
    u_t + u u_x + \frac{3}{2} (\bessel{-s} u^2)_x = 0, \qquad s \in (0, 1),
\end{equation}
where $u(t, x)$ is a real-valued function, and the operator $\bessel{-s}$ is a Fourier multiplier defined as
\begin{equation*} \label{eq:bessel_operator_definition}
    \bessel{-s} \colon f \mapsto \IFT\bigparanth{\jap{\xi}{-s} \hat{f}(\xi)}, \qquad \jap{\xi}{-s} = (1 + \xi^2)^{-\frac{s}{2}},
\end{equation*}
in the sense of distributions (see \eqref{eq:fourier_transform_normalization} for our normalization of the Fourier transform). The nonlocal term $\bessel{-s} u$ can equivalently be characterized as a convolution ${\besselkernel{s} * u}$, with kernel
\begin{equation*} \label{eq:bessel_convolution_kernel}
	\besselkernel{s}(x) = \IFT(\jap{\xi}{-s})(x) = \frac{1}{2\pi} \int_{\R} \jap{\xi}{-s} e^{i x \xi} \slot \d{\xi}.
\end{equation*}

Inserting the traveling wave assumption ${u(x, t) = \varphi(x - \wavespeed t)}$ in the fKdV equation \eqref{eq:main_fKdV} and integrating, we obtain
\begin{equation} \label{eq:main_steady_fKdV}
	- \wavespeed \varphi + \frac{1}{2} \varphi^2 + \bessel{-s} \varphi = 0.
\end{equation}
The right-hand side of \eqref{eq:main_steady_fKdV} is assumed to be zero without loss of generality, due to the Galilean transformation
\begin{equation*}
    \varphi \mapsto \varphi + \gamma, \qquad \wavespeed \mapsto \wavespeed + \gamma,
\end{equation*}
with $\gamma$ chosen such that $\gamma(1-\wavespeed - \frac{1}{2} \gamma)$ cancels the possible constant of integration. Similarly, the traveling-wave assumption for the fDP equation yields
\begin{equation} \label{eq:main_steady_fDP}
    -\wavespeed \varphi + \frac{1}{2} \varphi^2 + \frac{3}{2} \bessel{-s} \varphi^2 = \intconst,
\end{equation}
but here it is not possible to obtain zero on the right-hand side while at the same time preserving the structure of the equation. Therefore, we work with an arbitrary real constant~$\intconst$ on the right-hand side in \eqref{eq:main_steady_fDP}.

When referring to a traveling-wave solution to the fKdV (resp. fDP) equation, we mean a real-valued continuous and bounded function $\varphi$ satisfying the equation \eqref{eq:main_steady_fKdV} (resp. \eqref{eq:main_steady_fDP}) on $\R$.

\subsection{Main results}

The goal of this paper is to characterize and prove existence of even and periodic traveling-wave solutions for the fKdV and the fDP equation. In particular, we shall see that traveling-wave solutions are cusped if the amplitude is equal to the wave speed~$\wavespeed$. We call such solutions highest traveling waves, and our main results deal with their existence and regularity:

\begin{theorem}\label{thm:alt_1_and_2_occur}
	Let $s \in (0, 1)$ and $P \in (0, \infty)$. Then there exists an $s$-Hölder continuous, $P$-periodic function $\varphi$, along with a number $\mu \in (0, 1)$, such that $\varphi$ is a highest traveling-wave solution to the steady fKdV equation \eqref{eq:main_steady_fKdV} with wave speed $\mu$. The solution $\varphi$ is even, strictly increasing and smooth on $(\nicefrac{-P}{2}, 0)$, with $\varphi(0) = \wavespeed$ and
	\begin{equation*}
		\mu - \varphi(x) \eqsim \abs{x}^s
	\end{equation*}
	uniformly for $\abs{x} \ll 1$.
\end{theorem}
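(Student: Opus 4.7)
The plan is to realize $\varphi$ as a limit point of the global bifurcation curve $\globalbifurcationcurve$ constructed in the preceding sections. Crandall--Rabinowitz at each bifurcation point $\bifurcationpoint$ produces a local analytic curve of smooth, even, $P$-periodic, nontrivial solutions, and analytic global bifurcation theory extends it to $\globalbifurcationcurve$. After excluding (in the global bifurcation section) the alternatives of a closed loop and of blow-up in a suitable norm, the only remaining possibility is that along $\globalbifurcationcurve$ one has a sequence $(\varphi_n, \wavespeedcoeff{n})$ with $\wavespeedcoeff{n} - \max \varphi_n \to 0$, and each member even, strictly increasing on $(\nicefrac{-P}{2}, 0)$, smooth, uniformly bounded in $L^\infty$, and with $\wavespeedcoeff{n} \in (0, 1)$.

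From this sequence I would extract, via Helly's selection theorem applied to the monotone functions $\wavespeedcoeff{n} - \varphi_n$ on $[\nicefrac{-P}{2}, 0]$, a subsequence converging pointwise, and then uniformly by continuity and monotonicity of the limit, to an even, nondecreasing $\varphi$ with $\wavespeed \in (0, 1)$ and $\varphi(0) = \wavespeed$. Passage to the limit in the steady equation \eqref{eq:main_steady_fKdV} is justified by the $s$-order smoothing property of $\bessel{-s}$, which in particular gives compactness on bounded sets in $L^\infty$. On any subinterval where $\wavespeed - \varphi \geq \delta > 0$, the identity $\wavespeed \varphi - \frac{1}{2}\varphi^2 = \bessel{-s}\varphi$ can be solved locally for $\varphi$ as a smooth function of $\bessel{-s}\varphi$, and a standard bootstrap yields both $\smoothspace(\nicefrac{-P}{2}, 0)$ and strict monotonicity there.

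The central step is the sharp regularity at the crest. Completing the square in \eqref{eq:main_steady_fKdV} and evaluating at $x = 0$ gives the key identity
\begin{equation*}
    \tfrac{1}{2}\bigparanth{\wavespeed - \varphi(x)}^2 = \bessel{-s}\varphi(0) - \bessel{-s}\varphi(x),
\end{equation*}
so the desired $\wavespeed - \varphi(x) \eqsim \abs{x}^s$ reduces to the two-sided estimate $\bessel{-s}\varphi(0) - \bessel{-s}\varphi(x) \eqsim \abs{x}^{2s}$ for $\abs{x} \ll 1$. The lower bound I would obtain by splitting off the local singular part $\besselkernel{s}(y) = c_s \abs{y}^{s-1} + O(1)$ near $y = 0$ in the convolution representation, localizing to $\abs{y} \lesssim \abs{x}$, and invoking the strict positivity $\varphi(0) - \varphi(y) > 0$ for $y \not\equiv 0 \pmod{P}$ together with evenness. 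The matching upper bound requires first establishing a uniform $\holderspace{s}$ bound along the approximants $(\varphi_n)$ and then transferring it to the limit.

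The main obstacle will be this matching upper bound $\wavespeed - \varphi(x) \lesssim \abs{x}^s$, since uniform Hölder estimates along the bifurcation branch are not transparent from the smoothness of individual $\varphi_n$. I expect this to rest on a self-improving Campanato-type argument applied to the above identity, exploiting the sign and asymptotics of $\pbesselkernel{s}$ along with the monotonicity of $\varphi_n$ on $(\nicefrac{-P}{2}, 0)$; once the uniform $\holderspace{s}$-bound is secured, the two-sided asymptotic, Hölder regularity of $\varphi$ on all of $\R$, and strict monotonicity of the limit follow together.
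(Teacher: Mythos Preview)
Your overall architecture---global bifurcation, extract a limit, then establish the sharp crest asymptotics---matches the paper, but two of the key steps are handled differently and one of them has a real gap.

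\textbf{The lower bound $\wavespeed-\varphi(x)\gtrsim|x|^s$.} Your plan is to use the squared identity $\tfrac12(\wavespeed-\varphi(x))^2=(\bessel{-s}\varphi)(0)-(\bessel{-s}\varphi)(x)$, localize the convolution to $|y|\lesssim|x|$, and invoke strict positivity of $\varphi(0)-\varphi(y)$. This does not work as stated: in the symmetrized representation the kernel factor $\besselkernel{s}(x+y)+\besselkernel{s}(x-y)-2\besselkernel{s}(y)$ is \emph{negative} on most of $|y|<|x|$ (dominated by $-2c_s|y|^{s-1}$ near $y=0$), so localizing there gives the wrong sign. Even on the region $|y|>|x|$ where the kernel factor is nonnegative, bare positivity of $\varphi(0)-\varphi(y)$ yields no quantitative lower bound; one needs an a priori rate, which is precisely what you are trying to prove. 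The paper avoids this circularity entirely: its Proposition~\ref{prop:upper_regularity_bound} works at the level of the \emph{first}-derivative identity $(\wavespeed-\varphi)\varphi'=\bessel{-s}\varphi'$, restricts the kernel integral to a dyadic interval $(x_0/2,x_0/4)$, and extracts $\wavespeed-\varphi(x_0)\gtrsim|x_0|^s$ directly from the monotonicity of $\pbesselkernel{s}$ and $\varphi$---no information about the crest rate is needed as input. This lower bound is also what drives the proof that $\wavespeed(t)$ stays away from $0$ (via $\wavespeed-\varphi(-P/2)\gtrsim 1$), so you cannot defer it.

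\textbf{The upper bound and global $\holderspace{0,s}$ regularity.} You propose to obtain these via uniform $\holderspace{0,s}$ bounds along the approximating sequence $(\varphi_n)$. This is harder than necessary and awkward because $\varphi_n(0)<\wavespeed_n$, so the clean crest identity you wrote down does not hold for $\varphi_n$. The paper instead proves Theorem~\ref{thm:regularity_2} once and for all for \emph{any} solution with $\varphi(0)=\wavespeed$; the iterated bootstrap and interpolation there take place directly on the limit, with Proposition~\ref{prop:upper_regularity_bound} supplying the lower bound needed to close the argument. This separation (abstract regularity result, then existence via bifurcation) is what makes the proof go through.

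\textbf{Smaller points.} You treat $\wavespeed_n\in(0,1)$ as given, but both endpoints require argument (Lemma~\ref{lemma:a_priori_l2_norm} for $\wavespeed<1$, Proposition~\ref{prop:upper_regularity_bound} for $\wavespeed>0$). Also, the paper does not ``exclude blow-up'': alternatives~(i) and~(ii) of Proposition~\ref{prop:global_bifurcation} are shown to be \emph{equivalent}, hence both occur once~(iii) is ruled out; in particular the $\holderspace{0,\beta}$ norm does blow up along $\globalbifurcationcurve$.
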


Theorem \ref{thm:alt_1_and_2_occur} shows that there is a direct relationship between the order $-s$ of the dispersion in the equation and the Hölder regularity of exponent $s$ of the highest wave. This result is new, and expands upon the results from \cite{ehrnstrom_wahlen} for the Whitham equation 
\begin{equation} \label{eq:whitham_equation}
	u_t + uu_x + L_{W} u_x = 0, \qquad L_{W} \colon f \mapsto \IFT\bigparanth{\sqrt{\tanh{\xi}/ \xi}\, \hat{f}(\xi)},
\end{equation}
where the authors prove that there exist periodic highest traveling-wave solutions, which are cusped with exact $\nicefrac{1}{2}$-Hölder regularity at crests --- corresponding to the order $\nicefrac{-1}{2}$ of the dispersion in the equation. Our results thereby establish a broader picture of traveling waves for scalar, one-dimensional equations of the form \eqref{eq:whitham_equation} with weak inhomogeneous dispersive operators.

Using the same approach, we obtain the following result for the fDP equation.

\begin{theorem} \label{thm:alt_1_and_2_occur_fDP}
	Let $s \in (0, 1)$ and $\kappa > 0$. For small enough periods ${0 < P \ll 2\pi/ \sqrt{3^{2/s} - 1}}$, there exists an $s$-Hölder continuous, $P$-periodic function $\varphi$, along with a number $\mu > \sqrt{\intconst}$, such that $\varphi$ is a highest traveling-wave solution to the steady fDP equation \eqref{eq:main_steady_fDP} with wave-speed $\mu$. The solution $\varphi$ is even, strictly increasing and smooth on $(\nicefrac{-P}{2}, 0)$, with $\varphi(0) = \wavespeed$ and
	\begin{equation*}
		\mu - \varphi(x) \eqsim \abs{x}^s
	\end{equation*}
	uniformly for $\abs{x} \ll 1$.
\end{theorem}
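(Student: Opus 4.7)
The plan is to follow the bifurcation-theoretic strategy used for Theorem~\ref{thm:alt_1_and_2_occur}, suitably modified to handle the quadratic nonlocal term in \eqref{eq:main_steady_fDP}. I take $\wavespeed$ as the bifurcation parameter and work along the curve of positive constant solutions $\constsolpos(\wavespeed) := \tfrac{1}{4}\bigparanth{\wavespeed + \sqrt{\wavespeed^2 + 8\intconst}}$, obtained from the quadratic $2\constsol^2 - \wavespeed\constsol = \intconst$. Linearizing the left-hand side of \eqref{eq:main_steady_fDP} at $\varphi = \constsol$ yields
\[
    L_\constsol \psi = (\constsol - \wavespeed)\psi + 3\constsol\, \bessel{-s}\psi,
\]
which has $\cos(2\pi x/P)$ in its kernel exactly when $\wavespeed = \constsol\bigparanth{1 + 3\jap{2\pi/P}{-s}}$. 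Combined with $2\constsol^2 - \wavespeed\constsol = \intconst$, this system admits a positive solution for $\constsol$ precisely under the standing hypothesis $3\jap{2\pi/P}{-s} < 1$, i.e.\ $P \ll 2\pi/\sqrt{3^{2/s}-1}$. Strict monotonicity of $\jap{\cdot}{-s}$ secures simplicity of the eigenvalue on the space of even $P$-periodic functions, and Crandall--Rabinowitz then yields a local analytic curve $\localbifurcationcurvefDP$ of smooth, even, non-constant $P$-periodic solutions emanating from $(\wavespeed^*, \constsol^*)$.

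Next, I extend $\localbifurcationcurvefDP$ to a global analytic continuum $\globalbifurcationcurvefDP$ via the Buffoni--Toland analytic global bifurcation theorem. The usual alternatives are eliminated in turn: compactness of the nonlinear operator follows from the smoothing of $\bessel{-s}$, positivity of the convolution kernel $\besselkernel{s}$ delivers the maximum-principle bound $\varphi \leq \wavespeed$, and nodal arguments propagate the even/monotone structure along the branch. The only surviving scenario is $\max\varphi \to \wavespeed$, and a compactness argument along a sequence $(\wavespeed_n, \varphi_n) \in \globalbifurcationcurvefDP$ with $\max\varphi_n \to \wavespeed$ produces a limiting $P$-periodic, even solution $\varphi$, non-increasing on $(0, P/2)$, with $\varphi(0) = \wavespeed$. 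The strict lower bound $\wavespeed > \sqrt{\intconst}$ follows by evaluating \eqref{eq:main_steady_fDP} at the crest to get $\bessel{-s}\varphi^2(0) = (\wavespeed^2 + 2\intconst)/3$, which is strictly smaller than $\wavespeed^2 = \|\varphi^2\|_\infty$ because $\besselkernel{s}$ is strictly positive and $\varphi$ is non-constant.

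The main obstacle is the sharp Hölder estimate at the crest. Subtracting \eqref{eq:main_steady_fDP} at $x$ from its value at $0$ produces the clean algebraic identity
\[
    \tfrac{1}{2}(\wavespeed - \varphi(x))^2 = \tfrac{3}{2}\bigparanth{\bessel{-s}\varphi^2(0) - \bessel{-s}\varphi^2(x)},
\]
reducing the problem to a two-sided $\abs{x}^{2s}$ estimate of the increment of $\bessel{-s}\varphi^2$. The upper bound proceeds by a bootstrap: starting from mere continuity of $\varphi$, each pass through the identity combined with the $s$-smoothing $\bessel{-s} \colon \holderspace{\alpha} \to \holderspace{\alpha + s}$ improves the Hölder exponent, the iteration $\alpha_{n+1} = \tfrac{1}{2}(\alpha_n + s)$ converging to $s$ and yielding $\wavespeed - \varphi(x) \lesssim \abs{x}^s$. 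The matching lower bound is the most delicate step: one writes the increment of $\bessel{-s}\varphi^2$ as a kernel integral against the oscillation $\varphi^2(0) - \varphi^2$, isolates a local contribution near the origin where the singularity $\besselkernel{s}(y) \sim c_s\abs{y}^{s-1}$ couples with the strict positivity of the oscillation to produce the $\abs{x}^{2s}$ scaling, and controls the far-field remainder via the already-established upper bound. Smoothness and strict monotonicity of $\varphi$ on $(-P/2, 0)$ away from the cusp then follow from standard regularity theory for nonlocal convolution equations on the open set $\set{\wavespeed - \varphi > 0}$.
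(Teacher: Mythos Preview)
Your bifurcation setup and the regularity bootstrap are broadly in line with the paper's approach, but there is a genuine gap in the global-branch analysis. You invoke the smallness of $P$ only to produce a bifurcation point (i.e., to ensure $3\jap{2\pi/P}{-s} < 1$), and then assert that ``the only surviving scenario is $\max\varphi \to \wavespeed$.'' This is not automatic for the fDP equation. Unlike the fKdV case, where $\wavespeed(t)$ is trapped in $(0,1)$ by the a~priori bounds of Proposition~\ref{prop:a_priori_inf_sup} and Lemma~\ref{lemma:a_priori_l2_norm}, here alternative~(i) of the global bifurcation theorem may occur via $\wavespeed(t) \to \infty$ while $\varphi(t) < \wavespeed(t)$ for all $t$. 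The remark following Proposition~\ref{prop:no_sols_large_speeds} in the paper even notes numerical evidence that this \emph{does} happen for periods below $2\pi/\sqrt{3^{2/s}-1}$ but not sufficiently small. Without an upper bound on $\wavespeed(t)$, your compactness argument along $(\wavespeed_n,\varphi_n)$ fails to produce a limit, and the existence step collapses.

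The missing ingredient is precisely Proposition~\ref{prop:no_sols_large_speeds}, and it is where the stronger hypothesis $P \ll 2\pi/\sqrt{3^{2/s}-1}$ (not merely $<$) is used. The argument works with the reformulated equation \eqref{eq:main_steady_fdp_linearized}: differentiate $(\wavespeed-\varphi)\varphi' = \tfrac{3}{4}\wavespeed\,\altoperator{-s}\varphi'$, multiply by $\varphi'$, integrate over one period, and use Parseval together with $a_0=0$ to obtain $\int \varphi'\,\altoperator{-s}\varphi' \leq \altsymbol(2\pi/P)\int|\varphi'|^2$. Combining this with the crude bound $\max\varphi_n/\wavespeed_n \leq \tfrac{1}{2} + \intconst/\wavespeed_n^2 + C P^{s/2}$, which comes from the first bootstrap estimate and the fact that $\varphi_n$ crosses the constant $\constsolpos$, one forces $\int|\varphi'_n|^2 = 0$ once $\wavespeed_n$ is large and $P$ is small enough---contradicting nonconstancy. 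Your proposal neither mentions the linearized reformulation via $\altoperator{-s}$ nor addresses this unboundedness scenario, so as written it does not prove the theorem.
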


This development is also new, demonstrating the same kind of connection between the order of the dispersion and the regularity of highest waves, even in the case when the nonlocal operator $\bessel{-s}$ acts on a quadratic term in $\varphi$. It is not obvious that such a result should hold for the fDP equation, since the balance between dispersion and nonlinear effects is changed compared to the fKdV equation --- in fact, that highest waves only exist for sufficiently small periods can be seen as a by-product of this adjusted balance.

\begin{remark}
	Throughout, $X \lesssim Y$ means that there is a positive constant $C$ such that the inequality ${X \leq C Y}$ holds. The relation $X \lesssim Y \lesssim X$ is denoted by $X \eqsim Y$. Writing $X \ll Y$ signifies that $X$ is "much smaller than" $Y$, or equivalently, that $X \leq cY$ some small positive constant $c$.
\end{remark}

\subsection{Background}

The present work stands in the context of several recent studies of nonlocal scalar models for surface water waves. An important example is the Witham equation~\eqref{eq:whitham_equation}, which was introduced in \cite{whitham_1967} by combining the structure of the KdV equation with the exact linear dispersion relation of gravity water waves. The model was motivated by physical considerations: as remarked by G. B. Whitham \cite[p. 476]{whitham}, nonlinear shallow water equations which neglect dispersion allow wave breaking but not traveling waves, while on the other hand the KdV equation allows traveling waves but not wave breaking. The dispersion in the Whitham equation is much weaker than that of the KdV equation, promoting a wider array of wave-phenomena than captured by either model on its own. Both wave-breaking~\cite{hur} and traveling waves \cite{ehrnstrom_wahlen, truong_wahlen_wheeler} have been proved for the Whitham equation, and as mentioned above, it was shown in \cite{ehrnstrom_wahlen} that there are highest periodic traveling-wave solutions which are exactly $\nicefrac{1}{2}$-Hölder regular in the cusp. It was furthermore conjectured in \cite{ehrnstrom_wahlen} that such solutions are convex between cusps and behave exactly like $\varphi(x) = \wavespeed - \sqrt{\frac{\pi}{2}} \abs{x}^{\frac{1}{2}} + o(x)$ (in the normalization of \eqref{eq:whitham_equation}); we refer to \cite{ehrnstrom_maehlen_varholm_2023, enciso_serrano_vergara} for results in this direction.

The fKdV and fDP equations can be seen as a toy models that we use to investigate the connection between the order of the operator $\bessel{-s}$ and the precise regularity of cusps of highest traveling waves. A partial result in this direction, for a class of generalized Whitham equations with a parametrized inhomogeneous symbol on the form $(\tanh(\xi)/\xi)^{s}$ of order in~$(-1, 0)$, is presented in \cite{afram}. The fKdV equation with a homogeneous symbol of order in~$(-1, 0)$ and a generalized nonlinearity has been studied in \cite{hildrum_xue}, where it is proved that the equation admits highest cusped traveling waves with Hölder regularity given exactly by the order of the dispersion. We point out that the symbol $(1 + \xi^2)^{-s/2}$ considered in this paper is inhomogeneous, and that we here take a different direction of generalization: instead of a generalized nonlinear (local) term we consider the case when the nonlocal operator $\bessel{-s}$ acts on a nonlinear term $u^2$, thereby broadening the analysis to fractional equations of Degasperis-Procesi type.

The steady fKdV equation \eqref{eq:main_steady_fKdV} with a parameter $s > 1$ has been studied in \cite{hung_le}, where highest periodic traveling waves with Lipschitz regularity at crests were proved to exist. In~\cite{bruell_dhara} the authors consider the homogeneous counterpart of the fKdV equation with $s > 1$, and analogous results are obtained. Dispersion of order corresponding to $s = 1$ has been considered in \cite{ehrnstrom_johnson_claassen} for a dispersive shallow water wave model; it was shown that highest cusped traveling waves of log-Lipschitz regularity exist.

The (local) Degasperis--Procesi equation was first studied in \cite{degasperis_procesi}, and is known to permit peaked traveling-wave solutions with Lipschitz regularity \cite{lenells}. A nonlocal formulation of the equation was studied in \cite{arnesen}, where the existence of highest periodic traveling waves of Lipschitz regularity at crests was proved.

\subsection{Outline}

In Section~\ref{sec:preliminaries} we recall properties of the Fourier multiplier $\bessel{-s}$ and its corresponding convolution kernel $\besselkernel{s}$. Section~\ref{sec:fkdv} treats the steady fKdV equation, first with a study of regularity of solutions and then existence by means of global analytic bifurcation. Our main contribution here is Theorem \ref{thm:regularity_2}, where we prove that highest traveling waves for the fKdV equation with a parameter $s$ are precisely \mbox{$s$-Hölder} regular at cusps. In Section~\ref{sec:fdp} we study regularity and existence for the steady fDP equation using the same framework. The main difficulty here is dealing with the nonlocal and nonlinear term $\bessel{-s}\varphi^2$. It turns out that this can be circumvented by rewriting the equation such that the nonlocal term is linear in $\varphi$, but with a slightly different structure in terms of the wave speed~$\wavespeed$, resulting in analogous regularity results but a different bifurcation pattern.

\section{Preliminaries} \label{sec:preliminaries}

We introduce conventions and study the nonlocal operators present in the fKdV and fDP equations. First, we show that the operator $\bessel{-s}$ is a smoothing operator on the scale of Hölder--Zygmund spaces, and present properties of the convolution kernel $\besselkernel{s}$. Second, we derive and study an additional Fourier multiplier operator $\altoperator{-s}$ defined by the symbol $\altsymbol(\xi) = 4 (3 + \jap{\xi}{s})^{-1}$.

\subsection{The operator \texorpdfstring{$\bessel{-s}$}{TEXT}} \label{subsec:convolution_kernel}

The Fourier transform is denoted by $\FT$ and defined on the Schwartz space $\schwartzspace(\R)$ of rapidly decreasing smooth functions on $\R$. It extends to the space of tempered distributions $\schwartzspacedual(\R)$ via duality. Our normalization is
\begin{equation} \label{eq:fourier_transform_normalization}
    (\FT f )(\xi) = \int_{\R} f(x) e^{-i x \xi} \slot \d{x}
\end{equation}
for $f \in \schwartzspace(\R)$, meaning that $(\IFT f)(x) = \frac{1}{2\pi} (\FT f)(-x)$. We sometimes write $\hat{f}$ for the Fourier transform of $f$

Let $\contspace(\R)$ denote the space of uniformly continuous and bounded functions over $\R$ normed by ${\norm{f}_{\infty} = \sup_{x \in \R} |f(x)|}$, and let $\contderspace{k}(\R)$ be the space of $k$ times uniformly continuous differentiable and bounded functions with the norm $\norm{f}_{C^{k}(\R)} = \sum_{m = 0}^{k} \norm{f^{(m)}}_{\infty}$. We use the convention that $\integer{\alpha}$ and $\fraction{\alpha}$ denote the integer and fractional part of the real number $\alpha > 0$, with $0 < \fraction{\alpha} \leq 1$ imposed. The space of $\alpha$-Hölder continuous functions on $\R$ with $\alpha \in (0, 1)$ is defined as
\begin{equation*} \label{eq:definition_holder_space}
    \holderspace{0, \alpha}(\R) = \set{f \in \contspace(\R) \setsep \seminorm{f}_{\holderspace{0, \alpha}(\R)} < \infty}, \qquad \seminorm{f}_{\holderspace{0, \alpha}(\R)} = \sup_{\substack{x, y \in \R\\ x \neq y}} \frac{|f(x) - f(y)|}{|x - y|^{\alpha}}.
\end{equation*}
Moreover, using the second-order difference
\begin{equation*}
	(\diffop{h}^2 f)(x) = (\diffop{h} (\diffop{h}f))(x) = f(x+2h) - 2f(x+h) + f(x),
\end{equation*}
we define for every $\alpha > 0$ the Zygmund (sometimes called Hölder-Besov) space
\begin{equation*} \label{eq:definition_zygmund_space}
    \zygmundspace{\alpha}(\R) = \set{f \in C^{\integer{\alpha}}(\R) \setsep \seminorm{f}_{\zygmundspace{\alpha}(\R)} < \infty}, \qquad \seminorm{f}_{\zygmundspace{\alpha}(\R)} = \sup_{0 \neq h \in \R} \frac{\norm{\diffop{h}^2 f^{(\integer{\alpha})}}_{C^{0}(\R)}}{|h|^{\fraction{\alpha}}}.
\end{equation*}
Then $\holderspace{\integer{\alpha}, \fraction{\alpha}}(\R)$ and $\zygmundspace{\alpha}(\R)$ are the spaces of $\integer{\alpha}$-times continuously differentiable bounded functions on $\R$ with Hölder and Zygmund exponent $\fraction{\alpha}$, respectively (see \cite{triebel_2} for details on how these spaces are defined). By \cite[Theorem 1.2.2]{triebel_2} the Hölder space $\holderspace{\integer{s}, \fraction{s}}$ and the Zygmund space $\zygmundspace{s}$ coincide for non-integer exponent $s$, in the sense of equivalent norms. In this context we sometimes refer to Hölder-Zygmund spaces, and the two are used interchangeably when there is no confusion.

By smoothing, we mean increasing the Hölder-Zygmund exponent. One can verify that
\begin{equation*} \label{eq:s_multiplier}
	\abs{D_\xi^k (1 + \xi^2)^{-\frac{s}{2}}} \lesssim_k (1 + \abs{\xi})^{-s-k}
\end{equation*}
for all $k \in \N_0$ (here, $\N_0$ is the set of nonnegative integers). It follows as a special case of \mbox{\cite[Proposition 2.78]{bahouri_chemin_danchin}} that the operator $\bessel{-s}$ is a linear and bounded map
\begin{equation} \label{eq:smoothing_operator}
	\bessel{-s}\colon L^{\infty}(\R) \rightarrow C^{0, s}(\R) \qquad \textnormal{and} \qquad \bessel{-s} \colon \zygmundspace{\alpha}(\R) \rightarrow \zygmundspace{\alpha + s}(\R)
\end{equation}
for every $\alpha > 0$ and $s \in (0, 1)$.

The following characterization of the kernel $\besselkernel{s}$ is a version of \cite[Proposition 1.2.5]{grafakos_modern}.

\begin{lemma} \label{lemma:asympt_kernel_behavior}
	Let $s \in (0, 1)$. Then
	\begin{itemize}
		\item[(i)] $\besselkernel{s}$ has the representation
		\begin{equation*} \label{eq:kernel_integral_representation}
			\besselkernel{s}(x) = \frac{1}{\sqrt{4 \pi} \Gamma(\frac{s}{2})} \int_{0}^{\infty} e^{-t -\frac{x^2}{4t}}\ t^{\frac{s - 3}{2}} \slot \d{t},
		\end{equation*}
		\item[(ii)] $\besselkernel{s}$ is even, strictly positive and smooth on $\R \setminus \set{0}$,
		\item[(iii)] there exist constants $C_s$ and $C_s'$ such that
		\begin{equation*} \label{eq:asympt_kernel_behavior}
			\begin{cases}
				\besselkernel{s}(x) \lesssim_s e^{-|x|} & |x| \geq 1, \\
				\besselkernel{s}(x) = C_s \abs{x}^{s - 1} + H_s(x) & |x| < 1,
			\end{cases}
		\end{equation*}
		where $H_s(x) = C_s' + \bigo{\abs{x}^{s+1}}$ with 
		\begin{equation*} \label{eq:derivatives_of_H_s}
			\abs{H_s'(x)} = \bigo{\abs{x}^s} \qquad \textnormal{and} \qquad \abs{H_s''(x)} = \bigo{\abs{x}^{s-1}}.
		\end{equation*}
	\end{itemize}
\end{lemma}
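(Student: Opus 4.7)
My plan is to reduce every claim to a direct analysis of the subordination identity
\[
\jap{\xi}{-s} = \frac{1}{\Gamma(s/2)} \int_0^\infty e^{-t(1 + \xi^2)}\, t^{s/2 - 1} \, dt,
\]
which is an immediate consequence of the definition of the Gamma function. For (i), I would apply $\IFT$ to both sides and swap the order of integration by Fubini (legal for $x \neq 0$ since the Gaussian factor $e^{-t\xi^2}$ makes the double integral absolutely convergent); the standard one-dimensional inversion $\IFT(e^{-t\xi^2})(x) = (4\pi t)^{-1/2} e^{-x^2/(4t)}$ then yields exactly the displayed formula. Part (ii) follows immediately from that representation: the integrand is strictly positive and depends on $x$ only through $x^2$, giving positivity and evenness, while smoothness on $\R \setminus \{0\}$ comes from differentiating under the integral sign, which is valid uniformly on $|x| \geq \delta > 0$ since there $e^{-x^2/(4t)}$ dominates every negative power of $t$ as $t \to 0^+$.

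For part (iii), the unifying step is the substitution $t = (|x|/2) e^\tau$, which symmetrises the exponent, $t + x^2/(4t) = |x|\cosh\tau$, and rewrites (i) as
\[
\besselkernel{s}(x) = \frac{(|x|/2)^{(s-1)/2}}{\sqrt{\pi}\, \Gamma(s/2)}\, K_{(1-s)/2}(|x|),
\]
with $K_\nu$ the modified Bessel function of the second kind. The large-argument asymptotic $K_\nu(z) \sim \sqrt{\pi/(2z)}\, e^{-z}$ then gives $\besselkernel{s}(x) \lesssim |x|^{s/2 - 1} e^{-|x|} \lesssim e^{-|x|}$ for $|x| \geq 1$. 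For $|x| < 1$, I would invoke the small-argument series for non-integer index $\nu = (1-s)/2 \in (0, 1/2)$, namely $K_\nu(z) = \frac{\pi}{2\sin(\pi\nu)}\bigl(I_{-\nu}(z) - I_\nu(z)\bigr)$ with $I_\mu(z) = \sum_{k \geq 0} (z/2)^{2k+\mu}/(k!\,\Gamma(k+\mu+1))$; multiplication by the prefactor produces an absolutely convergent expansion
\[
\besselkernel{s}(x) = C_s |x|^{s-1} + C_s' + c_s |x|^{s+1} + d_s x^2 + \cdots, \qquad C_s = \frac{\Gamma((1-s)/2)}{2^s \sqrt{\pi}\, \Gamma(s/2)}.
\]
Since $s + 1 < 2$, the $|x|^{s+1}$ term dominates $x^2$ near the origin, so $H_s(x) := \besselkernel{s}(x) - C_s|x|^{s-1}$ satisfies $H_s(x) = C_s' + O(|x|^{s+1})$, and term-by-term differentiation of the series produces $|H_s'(x)| = O(|x|^s)$ and $|H_s''(x)| = O(|x|^{s-1})$.

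The main technical point is the rigorous justification of the small-$|x|$ expansion, particularly ensuring that the remainder bounds survive differentiation. A self-contained alternative (essentially the route taken in \cite{grafakos_modern}) avoids naming $K_\nu$: substitute $t = (|x|^2/4)v$ in (i) to extract the prefactor $(|x|/2)^{s-1}$, split the resulting $v$-integral at $v = 4/|x|^2$, Taylor-expand $e^{-|x|^2 v/4}$ on the bounded piece to generate the polynomial-in-$|x|^2$ corrections, and use the exponential decay of $e^{-|x|^2 v/4}$ to estimate the tail together with an explicit evaluation of the tail of $\int v^{(s-3)/2}\, dv$ to produce the constant $C_s'$. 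This recovers the same expansion by elementary means at the price of more bookkeeping.
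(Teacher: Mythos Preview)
The paper does not actually prove this lemma; it simply states that the result is ``a version of \cite[Proposition 1.2.5]{grafakos_modern}'' and moves on. Your plan therefore supplies what the paper omits, and both of your routes---the modified-Bessel representation $\besselkernel{s}(x) = \pi^{-1/2}\Gamma(s/2)^{-1}(|x|/2)^{(s-1)/2}K_{(1-s)/2}(|x|)$ together with the known asymptotics of $K_\nu$, and the direct Grafakos-style substitution $t = (|x|^2/4)v$---are standard and correct, with the Bessel-function route being the cleaner of the two since it packages the small-$|x|$ expansion and its termwise differentiability into classical series identities.

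One technical slip worth flagging: your Fubini justification for (i) does not work as written. The absolute value of the integrand $e^{-t}e^{-t\xi^2}t^{s/2-1}e^{ix\xi}$ is independent of $x$, so the condition $x \neq 0$ is irrelevant for absolute convergence of the double integral; and in fact
\[
\int_0^\infty \int_{\R} e^{-t}e^{-t\xi^2}t^{s/2-1}\,\d\xi\,\d t = \sqrt{\pi}\int_0^\infty e^{-t}t^{(s-3)/2}\,\d t
\]
diverges for $s<1$. The standard cure is either to read $\IFT$ distributionally, or to truncate the $t$-integral to $[\varepsilon,\infty)$ (where Fubini is legitimate), carry out the $\xi$-integration, and then let $\varepsilon \searrow 0$ by dominated convergence on the $t$-side---\emph{there} the factor $e^{-x^2/(4t)}$ with $x \neq 0$ does rescue integrability at $t=0$. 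This is routine and does not affect the remainder of your argument.
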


Furthermore, it turns out that $\besselkernel{s}$ is a completely monotone function. Recall that a smooth function ${g\colon (0, \infty) \rightarrow \R}$ is said to be completely monotone if
\begin{equation} \label{eq:completely_monotone_function}
    (-1)^n g^{(n)}(\lambda) \geq 0
\end{equation}
for all $n \in \N_0$ and $\lambda > 0$. This definition naturally extends to even functions which are smooth on $\R \setminus \set{0}$. The proof of the following proposition is based on \cite[Section 2]{ehrnstrom_wahlen}, while a more detailed account of completely monotone functions and related topics can be found in \cite{schilling_song_vondracek}.

\begin{proposition} \label{prop:completely_monotone_operator}
	For any $s \in (0, 1)$, the kernel $\besselkernel{s}$ is completely monotone. In particular, it is strictly decreasing and strictly convex on $(0, \infty)$.
\end{proposition}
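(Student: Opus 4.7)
The plan is to use the subordination representation in Lemma~\ref{lemma:asympt_kernel_behavior}(i) to exhibit $\besselkernel{s}|_{(0,\infty)}$ as a product of two completely monotone functions; the conclusion then follows from Bernstein's theorem together with the fact that the class of completely monotone functions on $(0, \infty)$ is closed under multiplication.

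For $x > 0$, I would perform the change of variables $t = x\tau$ in the integral representation to obtain
\begin{equation*}
	\besselkernel{s}(x) = \frac{x^{(s-1)/2}}{\sqrt{4\pi}\,\Gamma(s/2)} \int_{0}^{\infty} e^{-x(\tau + 1/(4\tau))}\, \tau^{(s-3)/2} \slot \d{\tau}.
\end{equation*}
Since $(s-1)/2 \in (-1/2, 0)$, the prefactor $x \mapsto x^{(s-1)/2}$ is completely monotone on $(0, \infty)$; this follows directly from the identity $(-1)^n D^n x^\alpha = |\alpha|(|\alpha|+1)\cdots(|\alpha|+n-1)\, x^{\alpha - n} > 0$ valid for every $\alpha \in (-1, 0)$ and $n \in \N_0$. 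For the integral factor, AM-GM yields $\tau + 1/(4\tau) \geq 1$, so the pushforward of the positive measure $\tau^{(s-3)/2}\slot\d{\tau}$ under $\tau \mapsto \sigma = \tau + 1/(4\tau)$ defines a positive Radon measure $\nu$ on $[1, \infty)$, and the integral can be rewritten as $\int_{1}^{\infty} e^{-x\sigma}\, \d{\nu(\sigma)}$. This is the Laplace transform of a positive measure, hence completely monotone on $(0, \infty)$ by Bernstein's theorem (see~\cite{schilling_song_vondracek}).

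Multiplying the two completely monotone factors then gives complete monotonicity of $\besselkernel{s}$ on $(0, \infty)$, i.e.\ the defining inequalities \eqref{eq:completely_monotone_function}. To upgrade the first two of these inequalities to strict ones, I would observe that the representing measure $\nu$ has infinite support, so both $-g'(x)$ and $g''(x)$ are strictly positive where $g$ denotes the Laplace transform above, and $x \mapsto x^{(s-1)/2}$ is itself positive, strictly decreasing and strictly convex on $(0, \infty)$; strict monotonicity and strict convexity of the product then follow from the Leibniz rule, since every term in $(fg)'$ and $(fg)''$ has the correct sign. The main technical point in this scheme is the justification of the change of variables and the differentiation under the integral sign in deriving $g'$ and $g''$; these are routine given the superexponential decay of $e^{-t - x^2/(4t)}$ as $t \to 0^+$ and $t \to \infty$ whenever $x > 0$, but they are the only step that requires any genuine verification.
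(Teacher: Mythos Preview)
Your argument is correct and takes a genuinely different route from the paper. The paper proceeds abstractly via Stieltjes functions: it verifies that $h(\lambda)=(1+\lambda)^{-1}$ is Stieltjes using the analytic-extension criterion, deduces that $h^{s/2}$ is Stieltjes, and then invokes \cite[Proposition~2.20]{ehrnstrom_wahlen} to conclude that $\jap{\xi}{-s}=h^{s/2}(\xi^2)$ is the Fourier transform of a completely monotone function; strictness is read off from \cite[Remark~1.5]{schilling_song_vondracek}. Your approach instead stays on the real line and exploits the subordination formula of Lemma~\ref{lemma:asympt_kernel_behavior}(i) directly: after the substitution $t=x\tau$ you factor $\besselkernel{s}(x)$ as $x^{(s-1)/2}$ times a Laplace transform in $x$, and both factors are visibly completely monotone.

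Each approach has something to recommend it. The paper's proof is more structural---it places the Bessel symbol inside the Stieltjes class and thereby connects to the same machinery used later for $\altoperator{-s}$---but it imports two nontrivial external results. Your argument is more elementary and self-contained, needing only Lemma~\ref{lemma:asympt_kernel_behavior}(i), differentiation under the integral (which you correctly flag as the only step requiring care), and the Leibniz rule; you do not even really need Bernstein's theorem, since the ``Laplace transform of a positive measure is CM'' direction is immediate. One small remark: the ``infinite support'' of $\nu$ is more than you need for strict positivity of $-g'$ and $g''$; since $\nu$ is nonzero and supported in $[1,\infty)\subset(0,\infty)$, the integrands $\sigma e^{-x\sigma}$ and $\sigma^2 e^{-x\sigma}$ are already strictly positive, which suffices.
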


\begin{proof}
	Let $h \colon (0, \infty) \rightarrow [0, \infty)$ be defined as $h(\lambda) = (1 + \lambda)^{-1}$. We claim that $h$ is a Stieltjes function, meaning that it can be written in terms of the integral representation
	\begin{equation*} \label{eq:stieltjes_function}
		h(\lambda) = \frac{a}{\lambda} + b + \int_{(0, \infty)} \frac{1}{\lambda + t} \d{\sigma(t)}, \qquad \textnormal{with} \qquad \int_{(0, \infty)} \frac{1}{1 + t} \d{\sigma(t)} < \infty
	\end{equation*}
	for the Borel measure $\sigma$ on $(0, \infty)$ and $a, b$ nonnegative constants. By \cite[Corollary 7.4]{schilling_song_vondracek}, if $g$ is a strictly positive function on $(0, \infty)$, then $g$ is Stieltjes if and only if
	\begin{equation*}
		\lim_{\lambda \searrow 0} g(\lambda) \in [0, \infty]
	\end{equation*}
	and $g$ has an analytic extension to $\C \setminus (-\infty, 0]$ with
	\begin{equation*}
		\imaginary(z) \imaginary(g(z)) \leq 0.
	\end{equation*}
	It is easy to verify that this is the case for the function $h$:
	\begin{equation*}
		\imaginary(\zeta) \imaginary(h(\zeta)) = -\frac{\imaginary(\zeta)^2}{(1+\real(\zeta))^2 + \imaginary(\zeta)^2} \leq 0
	\end{equation*}
	for every $\C \setminus (-\infty, 0]$. Hence, $h$ is Stieltjes. Even more, by \cite[Lemma 2.12]{ehrnstrom_wahlen} this means that $m^{s/2}$ is Stieltjes for every $s \in (0, 1)$.
	
	Now we invoke \cite[Proposition 2.20]{ehrnstrom_wahlen}, which states that if ${f \colon \R \rightarrow \R}$ and ${g \colon (0, \infty) \rightarrow \R}$ are two functions satisfying $f(\xi) = g(\xi^2)$ for all $\xi \neq 0$, then $f$ is the Fourier transform of an even, integrable, and completely monotone function if and only if $g$ is Stieltjes with
	\begin{equation*}
		\lim_{\lambda \searrow 0} g(\lambda) < \infty \qquad \textnormal{and} \qquad \lim_{\lambda \rightarrow \infty} g(\lambda) = 0.
	\end{equation*}
	This holds in our case, since we have shown that $h^{s/2}$ is Stieltjes and that
	\begin{equation*}
		\lim_{\lambda \searrow 0} h^{s/2} (\lambda) = 1 \qquad \textnormal{and} \qquad \lim_{\lambda \rightarrow \infty} h^{s/2} (\lambda) = 0
	\end{equation*}
	for every choice of $s \in (0, 1)$. But $h^{s/2}(\xi^2) = \jap{\xi}{-s}$ is the Fourier transform of $\besselkernel{s}$, meaning that $\besselkernel{s}$ is completely monotone.
	
	It remains to prove that $\besselkernel{s}$ is strictly decreasing and strictly convex on $(0, \infty)$. But according to \cite[Remark 1.5]{schilling_song_vondracek}, if $g$ is not identically constant, then \eqref{eq:completely_monotone_function} holds with strict inequality for every $\lambda$ and every $n \in \N_0$.
\end{proof}

Towards studying periodic solutions of the fKdV equation, we now define the periodic convolution kernel
\begin{equation*} \label{eq:periodic_kernel}
	\pbesselkernel{s}(x) = \sum_{n \in \Z} \besselkernel{s}(x + nP),
\end{equation*}
motivated by the observation
\begin{equation*}
		(\bessel{-s} f)(x) = (\besselkernel{s} \conv f)(x) = \int_{\R} \besselkernel{s}(x-y)f(y) \d{y} = \int_{-P/2}^{P/2} \pbesselkernel{s}(x-y)f(y) \d{y}
\end{equation*}
for every $P$-periodic smooth function $f$. Owing to Lemma~\ref{lemma:asympt_kernel_behavior}, one has
\begin{equation} \label{eq:periodic_singularity_kernel_representation}
	\pbesselkernel{s}(x) \eqsim_{P, s} |x|^{s-1} \qquad \textnormal{for} \qquad \abs{x} \ll 1.
\end{equation}
The previous discussion on $\besselkernel{s}$ implies that $\pbesselkernel{s}$ has the following properties (see \mbox{\cite[Remark 3.4]{ehrnstrom_wahlen}} for a proof).

\begin{lemma} \label{lemma:periodic_kernel}
	The periodic kernel $\pbesselkernel{s}$ is even, $P$-periodic and strictly increasing on $(\nicefrac{-P}{2}, 0)$.
\end{lemma}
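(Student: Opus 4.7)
My plan is to handle the three claims in turn. Evenness and $P$-periodicity are immediate from the defining series for $\pbesselkernel{s}$: the reindexing $n \mapsto n+1$ gives $\pbesselkernel{s}(x+P) = \pbesselkernel{s}(x)$, while $n \mapsto -n$ combined with the evenness of $\besselkernel{s}$ (Lemma \ref{lemma:asympt_kernel_behavior}(ii)) gives $\pbesselkernel{s}(-x) = \pbesselkernel{s}(x)$. By these two symmetries, strict increase on $(\nicefrac{-P}{2}, 0)$ is equivalent to strict decrease on $(0, \nicefrac{P}{2})$, which is what I will actually verify.

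The main tool for the monotonicity claim is Bernstein's theorem. Proposition \ref{prop:completely_monotone_operator} states that $\besselkernel{s}$ is completely monotone on $(0, \infty)$, so there exists a nontrivial positive Borel measure $\mu$ on $(0, \infty)$ with
\begin{equation*}
    \besselkernel{s}(y) = \int_{(0, \infty)} e^{-\lambda y} \d{\mu(\lambda)} \qquad \text{for all } y > 0.
\end{equation*}
Substituting this into the defining series for $\pbesselkernel{s}$, interchanging sum and integral (permissible by Tonelli, since the integrand is nonnegative), and evaluating the inner sum as two geometric series (split at $n=0$), I obtain, for $x \in (0, P)$,
\begin{equation*}
    \pbesselkernel{s}(x) = \int_{(0, \infty)} \frac{e^{-\lambda x} + e^{-\lambda(P-x)}}{1 - e^{-\lambda P}} \d{\mu(\lambda)}.
\end{equation*}
Differentiating under the integral --- justifiable by dominated convergence on compact subsets of $(0, P)$, using the exponential decay of Lemma \ref{lemma:asympt_kernel_behavior}(iii) --- yields an integrand proportional to $\lambda\bigparanth{e^{-\lambda(P-x)} - e^{-\lambda x}}/(1-e^{-\lambda P})$, which is strictly negative for every $\lambda > 0$ whenever $x \in (0, \nicefrac{P}{2})$, since $x < P-x$ there. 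Because $\mu$ is nontrivial, integrating gives $\pbesselkernel{s}'(x) < 0$, as desired.

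The main obstacle I expect is not conceptual but technical: the rigorous justification of exchanging summation, integration, and differentiation near the singularity of $\besselkernel{s}$ at the origin. This singularity lives entirely in the $n=0$ term, while the tail $\sum_{n \neq 0} \besselkernel{s}(x + nP)$ and its derivatives converge uniformly on compact subsets of $(\nicefrac{-P}{2}, \nicefrac{P}{2})$ by Lemma \ref{lemma:asympt_kernel_behavior}(iii), so all exchanges can be handled cleanly on such subsets. An alternative elementary pairing approach --- writing $\pbesselkernel{s}(x) = \besselkernel{s}(x) + \sum_{m \geq 1}\bigparanth{\besselkernel{s}(x+mP) + \besselkernel{s}(mP-x)}$ and differentiating termwise --- does not produce an immediate sign, since convexity of $\besselkernel{s}$ makes each paired derivative $\besselkernel{s}'(x+mP) - \besselkernel{s}'(mP-x)$ positive, leaving one to control their total against the lone negative contribution from the $n=0$ term. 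Bernstein's representation circumvents this by recasting the kernel as a superposition of exponentials, whose periodization on the fundamental domain has a transparent monotonicity structure.
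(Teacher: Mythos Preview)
Your argument is correct. The paper itself does not prove this lemma but defers to \cite[Remark~3.4]{ehrnstrom_wahlen}, so there is no in-text proof to compare against directly.

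One remark on your discussion of alternatives: the elementary pairing route does in fact work, just with a different choice of pairs. Rather than isolating the $n=0$ term and pairing $m$ with $-m$, pair $n$ with $-(n+1)$ for $n \geq 0$. For $x \in (0, \nicefrac{P}{2})$ this gives
\begin{equation*}
\pbesselkernel{s}(x) = \sum_{n=0}^{\infty} \bigparanth{\besselkernel{s}(nP+x) + \besselkernel{s}((n+1)P - x)},
\end{equation*}
and termwise differentiation (justified by the exponential decay of $\besselkernel{s}$ and its derivatives) yields
\begin{equation*}
\pbesselkernel{s}'(x) = \sum_{n=0}^{\infty} \bigparanth{\besselkernel{s}'(nP+x) - \besselkernel{s}'((n+1)P - x)}.
\end{equation*}
Since $0 < nP+x < (n+1)P-x$ for each $n \geq 0$, and $\besselkernel{s}'$ is strictly increasing on $(0,\infty)$ by the strict convexity in Proposition~\ref{prop:completely_monotone_operator}, every summand is strictly negative. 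This is the pairing used in the reference the paper cites; it requires only strict decrease and strict convexity of $\besselkernel{s}$ on $(0,\infty)$, not the full Bernstein representation. Your Bernstein approach is equally valid and has the bonus of producing a closed integral formula for $\pbesselkernel{s}$ on the fundamental domain, at the price of slightly heavier bookkeeping when differentiating under the integral against the (infinite) Bernstein measure.
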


Having established these properties of $\besselkernel{s}$ and $\pbesselkernel{s}$, the two subsequent lemmas follow by the same arguments as in \cite[Lemma 3.5, Lemma 3.6]{ehrnstrom_wahlen}.

\begin{lemma} \label{lemma:monotone_operator}
    Let $s \in (0, 1)$. If $f, g \in \contspace(\R)$ with $f \geq g$ and $f(x_0) > g(x_0)$ for some $x_0$, then
	\begin{equation*}
		\bessel{-s} f > \bessel{-s} g.
	\end{equation*}
\end{lemma}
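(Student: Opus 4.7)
The plan is to work with the difference $u := f - g$, which lies in $\contspace(\R)$, is nonnegative everywhere, and satisfies $u(x_0) > 0$. By linearity of $\bessel{-s}$ it suffices to show that $(\bessel{-s} u)(x) > 0$ for every $x \in \R$. Since $\besselkernel{s} \in L^1(\R)$ (integrable at infinity by the exponential decay and integrable near the origin because the singularity $\abs{\slot}^{s-1}$ has exponent $s - 1 > -1$, both by Lemma~\ref{lemma:asympt_kernel_behavior}), the convolution representation
\begin{equation*}
(\bessel{-s} u)(x) = \int_{\R} \besselkernel{s}(x-y) u(y) \slot \d{y}
\end{equation*}
is valid pointwise for every $u \in \contspace(\R)$ and every $x \in \R$.

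Next I would use the continuity of $u$ at $x_0$ together with $u(x_0) > 0$ to produce $\delta > 0$ and $c > 0$ such that $u(y) \geq c$ on the interval $I := (x_0 - \delta, x_0 + \delta)$. Then for any fixed $x \in \R$,
\begin{equation*}
(\bessel{-s} u)(x) \geq \int_I \besselkernel{s}(x-y) u(y) \slot \d{y} \geq c \int_I \besselkernel{s}(x-y) \slot \d{y}.
\end{equation*}
By Lemma~\ref{lemma:asympt_kernel_behavior}(ii) (or equivalently by Proposition~\ref{prop:completely_monotone_operator}), $\besselkernel{s}$ is strictly positive on $\R \setminus \set{0}$, so the integrand in the last expression is positive for every $y \in I \setminus \set{x}$ — a set of full measure in $I$ — and the integral is strictly positive. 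This yields $(\bessel{-s} u)(x) > 0$ and hence $\bessel{-s} f > \bessel{-s} g$, as claimed.

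There is no real obstacle here; the only thing worth checking carefully is that the convolution integral makes sense pointwise for merely bounded continuous $u$, which is handled by the $L^1$ bound on $\besselkernel{s}$. The argument is a direct translation of the Whitham-case proof cited as \cite[Lemma 3.5]{ehrnstrom_wahlen}, the essential input being the strict positivity of the kernel off the origin.
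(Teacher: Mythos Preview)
Your proof is correct and follows exactly the approach the paper has in mind: the paper does not spell out a proof but refers to \cite[Lemma 3.5]{ehrnstrom_wahlen}, and your argument---reducing by linearity to $u = f - g \geq 0$, using continuity to get a uniform positive lower bound on an interval, and then the strict positivity and integrability of $\besselkernel{s}$---is precisely that argument.
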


\begin{lemma} \label{lemma:greater_than_zero_for_odd_functions}
	Let $s \in (0, 1)$ and $P \in (0, \infty]$. Assume that $f$ is an odd, $P$-periodic and continuous function with $f \geq 0$ on $(\nicefrac{-P}{2}, 0)$ and $f(x_0) > 0$ for some $x_0 \in (\nicefrac{-P}{2}, 0)$. Then
	\begin{equation*}
		\bessel{-s}f > 0
	\end{equation*}
	on $(\nicefrac{-P}{2}, 0)$.
\end{lemma}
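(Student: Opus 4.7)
The plan is to combine the symmetry of the kernel with the oddness of $f$ to reduce strict positivity of $\bessel{-s} f$ on $(\nicefrac{-P}{2}, 0)$ to a pointwise comparison of $\pbesselkernel{s}$ at two arguments, which is then settled via monotonicity on the circle.

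First I would write
\begin{equation*}
    (\bessel{-s} f)(x) = \int_{-P/2}^{P/2} \pbesselkernel{s}(x - y) f(y) \, \d{y}
\end{equation*}
(in the case $P = \infty$ one uses $\besselkernel{s}$ and integrates over $\R$), split the integral at $y = 0$, substitute $y \mapsto -y$ in the part over $(0, P/2)$, and invoke $f(-y) = -f(y)$ to arrive at the symmetric expression
\begin{equation*}
    (\bessel{-s} f)(x) = \int_{-P/2}^{0} \bigl[ \pbesselkernel{s}(x - y) - \pbesselkernel{s}(x + y) \bigr] f(y) \, \d{y}.
\end{equation*}
Since $f \geq 0$ on $(\nicefrac{-P}{2}, 0)$ by assumption, and $f$ is continuous with $f(x_0) > 0$ at some interior $x_0$, strict positivity of $\bessel{-s} f$ at any $x \in (\nicefrac{-P}{2}, 0)$ will follow at once, provided the kernel difference in brackets is strictly positive on $(\nicefrac{-P}{2}, 0)^2$.

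Next I would verify this pointwise inequality. By Lemma~\ref{lemma:periodic_kernel} the kernel $\pbesselkernel{s}$ is even, $P$-periodic, and strictly increasing on $(\nicefrac{-P}{2}, 0)$; equivalently, $\pbesselkernel{s}(a)$ is a strictly decreasing function of the circle distance $\dist(a, P\Z)$. It therefore suffices to show
\begin{equation*}
    \dist(x - y, P\Z) < \dist(x + y, P\Z) \qquad \textnormal{for all} \qquad x, y \in (\nicefrac{-P}{2}, 0).
\end{equation*}
Writing $x = -P/2 + a$ and $y = -P/2 + b$ with $a, b \in (0, P/2)$ reduces this to the elementary inequality $|a - b| < \min(a + b, P - a - b)$, which is immediate for $a, b \in (0, P/2)$. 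In the case $P = \infty$ the analogous inequality is just $|x - y| < |x + y|$ for $x, y < 0$, and the strict monotonicity of $\besselkernel{s}$ on $(0, \infty)$ from Proposition~\ref{prop:completely_monotone_operator} applies in place of Lemma~\ref{lemma:periodic_kernel}.

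The only real bookkeeping is in the distance comparison: depending on whether $x + y$ lies in $(\nicefrac{-P}{2}, 0)$ or in $(-P, \nicefrac{-P}{2})$ the nearest element of $P\Z$ changes, and both subcases have to be checked before appealing to monotonicity of $\pbesselkernel{s}$. I do not foresee any deeper obstacle; the argument is parallel to that of \cite[Lemma 3.6]{ehrnstrom_wahlen}.
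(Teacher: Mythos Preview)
Your proposal is correct and follows exactly the approach the paper intends: the paper does not spell out a proof but defers to \cite[Lemma~3.6]{ehrnstrom_wahlen}, and your argument---symmetrizing the convolution via oddness of $f$ and then using the strict monotonicity of $\pbesselkernel{s}$ (respectively $\besselkernel{s}$) in the circle distance to show $\pbesselkernel{s}(x-y) > \pbesselkernel{s}(x+y)$ on $(\nicefrac{-P}{2},0)^2$---is precisely that argument. The case split and the substitution $x=-P/2+a$, $y=-P/2+b$ handle the bookkeeping cleanly; nothing is missing.
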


\subsection{The operator \texorpdfstring{$\altoperator{-s}$}{TEXT}} \label{subsec:alternative_operator}

We derive a version of the steady fDP equation where the nonlocal operator acts on a linear term in $\varphi$. Taking the Fourier transform of the equation yields
\begin{equation*}
	\frac{1}{2} (1 + 3 \jap{\cdot}{-s}) \FT(\varphi^2) = \wavespeed \FT(\varphi) + \intconst \delta_0,
\end{equation*}
in distributional sense. Since $1 + 3 \jap{\xi}{-s}$ smooth and nonzero, this can be reformulated to
\begin{equation*}
	\frac{1}{2} \FT(\varphi^2) = \frac{1}{1 + 3 \jap{\cdot}{-s}} (\wavespeed \FT(\varphi) + \intconst \delta_0) = \wavespeed \FT(\varphi) - \frac{3}{4} \wavespeed \altsymbol(\cdot) \FT(\varphi) + \frac{1}{4} \intconst \delta_0,
\end{equation*}
where we have defined
\begin{equation*} \label{eq:altsymbol}
	\altsymbol(\xi) = \frac{4}{3 + \jap{\xi}{s}}.
\end{equation*}
Let $\altoperator{-s}$ be the Fourier multiplier defined by $\altsymbol$. Applying the inverse Fourier transform we arrive at
\begin{equation} \label{eq:main_steady_fdp_linearized}
    -\wavespeed \varphi + \frac{1}{2} \varphi^2 + \frac{3}{4} \wavespeed \altoperator{-s} \varphi = \frac{1}{4} \intconst,
\end{equation}
which with our assumptions on $\varphi$ may be understood in the strong, pointwise sense and is equivalent to the steady fDP equation.

Note that $\altoperator{-s}$ is a smoothing operator of order $-s$ (in the sense of \eqref{eq:smoothing_operator}). Indeed, using Faà di Bruno's formula it can be shown that ${\abs{m^{(k)}(\xi)} \lesssim_k (1 + \abs{\xi})^{-s-k}}$ for every $k \in \N_0$, and the claim follows again due to \cite[Proposition 2.78]{bahouri_chemin_danchin}.

As before, the operator $\altoperator{-s}$ can be written as a convolution with kernel and periodic kernel
\begin{equation*} \label{eq:altkernel}
	\altkernel{s}(x) = \IFT\bigparanth{m(\xi)}(x) \qquad \textnormal{and} \qquad \paltkernel{s}(x) = \sum_{n \in \Z} \altkernel{s}(x + nP).
\end{equation*}
Since $m(\xi)$ is smooth and all derivatives are integrable, we infer that $\altkernel{s}$ is smooth outside the origin and has rapidly decaying derivatives.

\begin{lemma} \label{lemma:asymptotic_altkernel_behavior}
	Let $s \in (0, 1)$. Then
	\begin{itemize}
		\item[(i)] $\altkernel{s}$ is even, nonnegative and integrable with $\norm{\altkernel{s}}_{\lebesguespace{1}(\R)} = 1$,
		\item[(ii)] $\paltkernel{s}$ is even, strictly increasing and smooth on $(\nicefrac{-P}{2}, 0)$, and
		\begin{equation} \label{eq:periodic_alt_kernel_singularity}
			\paltkernel{s} \eqsim_{P, s} \abs{x}^{s-1} \qquad \textnormal{for} \qquad \abs{x} \ll 1.
		\end{equation}
	\end{itemize}
\end{lemma}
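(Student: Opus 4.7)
The plan for part (i) is to mirror the Stieltjes-function argument of Proposition~\ref{prop:completely_monotone_operator}. Writing $\altsymbol(\xi) = g(\xi^2)$ with $g(\lambda) = \frac{4}{3 + (1+\lambda)^{s/2}}$, I would check that $g$ extends analytically to $\C \setminus (-\infty, 0]$ with $\imaginary(z)\imaginary(g(z)) \leq 0$. The key observation is that, for $z$ in the upper half-plane, the principal branch of $(1+z)^{s/2}$ lies in the sector of argument $(0, s\pi/2) \subset (0, \nicefrac{\pi}{2})$; hence so does $3 + (1+z)^{s/2}$, and inversion sends this sector into the lower half-plane. Combined with $g(0^+) = 1$ and $g(+\infty) = 0$, this puts $g$ in the scope of \cite[Proposition 2.20]{ehrnstrom_wahlen}, immediately yielding that $\altkernel{s}$ is even, integrable, and completely monotone as a function of $\abs{x}$. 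In particular $\altkernel{s} \geq 0$, and $\norm{\altkernel{s}}_{\lebesguespace{1}(\R)} = \altsymbol(0) = 1$.

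For part (ii), evenness of $\paltkernel{s}$ is inherited from $\altkernel{s}$, while smoothness on $(\nicefrac{-P}{2}, 0)$ follows by term-by-term differentiation of the defining series, legitimised by the rapid decay of derivatives of $\altkernel{s}$ noted just before the lemma. The delicate step is strict monotonicity on $(\nicefrac{-P}{2}, 0)$; here I would substitute $x = \nicefrac{-P}{2} + y$ with $y \in (0, \nicefrac{P}{2})$ to re-index the periodised sum as
\begin{equation*}
    \paltkernel{s}(\nicefrac{-P}{2} + y) = \sum_{n \geq 0} \bigparanth{\altkernel{s}(a_n + y) + \altkernel{s}(a_n - y)}, \qquad a_n := (n + \tfrac{1}{2})P,
\end{equation*}
using evenness of $\altkernel{s}$ together with $a_n > y$. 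Differentiating each pair in $y$ yields $\altkernel{s}'(a_n + y) - \altkernel{s}'(a_n - y)$, which is strictly positive because $\altkernel{s}$ is a non-constant completely monotone function on $(0, \infty)$, so $\altkernel{s}'' > 0$ there and $\altkernel{s}'$ is strictly increasing, while $a_n + y > a_n - y > 0$.

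For the singular asymptotic \eqref{eq:periodic_alt_kernel_singularity}, I would split
\begin{equation*}
    \altsymbol(\xi) = 4 \jap{\xi}{-s} - \frac{12}{\jap{\xi}{s}\bigparanth{3 + \jap{\xi}{s}}},
\end{equation*}
where the remainder is a symbol of order $-2s$ and hence has an inverse Fourier transform strictly less singular at the origin than $\besselkernel{s}$. Applying Lemma~\ref{lemma:asympt_kernel_behavior}(iii) to the leading $4\besselkernel{s}$-piece then gives $\altkernel{s}(x) \eqsim \abs{x}^{s-1}$ for $\abs{x} \ll 1$; the tails $\sum_{n \neq 0} \altkernel{s}(x + nP)$ are smooth and bounded near $x = 0$, so the singularity transfers verbatim to $\paltkernel{s}$.

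The main obstacle I expect is the strict-monotonicity step: a naive pairing of $\altkernel{s}(x + nP)$ with $\altkernel{s}(x - nP)$ produces derivatives of mixed sign, and it is only after translating by $\nicefrac{P}{2}$ that every summand contributes a positive derivative, so that strict convexity of $\altkernel{s}$ on $(0, \infty)$ can be applied term-by-term.
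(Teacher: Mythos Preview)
Your argument is correct and in several places more self-contained than the paper's. For part~(i) the paper does \emph{not} revisit the Stieltjes criterion; instead it observes that $\altsymbol(\sqrt{\abs{\xi}})=g\circ f$ with $f(\xi)=(1+\xi)^{s/2}$ Bernstein and $g(y)=4(3+y)^{-1}$ completely monotone, invokes \cite[Theorem~3.7]{schilling_song_vondracek} to conclude complete monotonicity of the composition, and then appeals to Schoenberg--Bochner to deduce that $\altsymbol$ is the transform of a nonnegative finite measure. Your route via the analytic Stieltjes criterion and \cite[Proposition~2.20]{ehrnstrom_wahlen} reaches the same endpoint and, as a bonus, yields complete monotonicity of $\altkernel{s}$ directly, which you then exploit in~(ii).

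For the monotonicity of $\paltkernel{s}$ the paper simply cites \cite[Theorem~2.5]{bruell_pei}; your explicit $\nicefrac{P}{2}$-shift and termwise convexity argument is a clean, elementary replacement. The only place where the paper is more careful is the asymptotic~\eqref{eq:periodic_alt_kernel_singularity}: your single-step splitting leaves a remainder of order $-2s$, which is integrable only when $s>\tfrac{1}{2}$. For $s\le\tfrac{1}{2}$ the claim that a $(-2s)$-order symbol is ``strictly less singular'' is true but not immediate from the tools at hand; the paper handles this by iterating the geometric expansion
\[
\altsymbol(\xi)=4\sum_{n=1}^{N-1}(-3)^{n-1}\jap{\xi}{-ns}+4(-3)^{N-1}\frac{\jap{\xi}{-Ns}}{1+3\jap{\xi}{-s}},\qquad N-1\le\tfrac{1}{s}<N,
\]
so that the final remainder is genuinely integrable and Lemma~\ref{lemma:asympt_kernel_behavior} applies to each $\jap{\xi}{-ns}$ term separately. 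Adding this iteration closes the gap.
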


\begin{proof}
	First we claim that the function
	\begin{equation*}
		\altsymbol(\sqrt{\abs{\xi}}) = \frac{4}{3 + (1 + \abs{\xi})^{s/2}}
	\end{equation*}
	is completely monotone on $(0, \infty)$. Indeed, it is a composition of functions ${g(y) = 4 (3 + y)^{-1}}$ and ${f(\xi) = (1 + \xi)^{s/2}}$, and one can check that $f$ is a Bernstein function (\mbox{\cite[Definition 3.1]{schilling_song_vondracek}}) and $g$ is completely monotone, so by \cite[Theorem 3.7]{schilling_song_vondracek} we conclude that $\altsymbol(\sqrt{\abs{\xi}})$ is completely monotone.

	(i) Clearly $\altkernel{s}$ is even and real since $m(\xi)$ is even and real. Next, recall the following two results due to Schoenberg and Bochner, respectively \cite{schilling_song_vondracek}. Firstly, a function $g\colon [0, \infty) \rightarrow \R$ continuous at zero is completely monotone if and only if $g(\abs{\cdot}^2)$ is positive definite on $\R^d$ for all $d \in \N$. Secondly, a function $f \colon \R^d \rightarrow \C$ is continuous and positive definite if and only if it is the Fourier transform of a finite nonnegative Borel measure on $\R^d$. This allows us to conclude that $m(\xi)$ is positive definite and consequently the Fourier transform of a finite nonnegative Borel measure. So $\altkernel{s}$ is integrable and nonnegative with
	\begin{equation*}
		\norm{\altkernel{s}}_{\lebesguespace{1}(\R)} = \int_{\R} \altkernel{s}(x) \slot \d{x} = (\IFT\FT(m))(0) = m(0) = 1.
	\end{equation*}
	
	(ii) It follows immediately from \cite[Theorem 2.5]{bruell_pei} that since $\altsymbol(\sqrt{\abs{\cdot}})$ is a completely monotone function and $m$ has the smoothing property, the periodic kernel is even, strictly increasing and smooth on $(\nicefrac{-P}{2}, 0)$. Let us assume first that $s > 1/2$, and write
	\begin{equation} \label{eq:first_expansion_m}
		\altsymbol(\xi) = 4\jap{\xi}{-s} - 12 \frac{\jap{\xi}{-2s}}{1 + 3 \jap{\xi}{-s}}.
	\end{equation}
	Taking the inverse Fourier transform and using Lemma~\ref{lemma:asympt_kernel_behavior} for the first term and that the second term is integrable, we obtain
	\begin{equation*}
		\abs{\IFT(m)(x)} \lesssim_s \abs{x}^{s - 1} \qquad \textnormal{for} \qquad \abs{x} \ll 1.
	\end{equation*}
	Similarly, for any $s \in (0, 1)$ we can continue the expansion \eqref{eq:first_expansion_m} and write $m(\xi)$ as a finite sum
	\begin{equation*}
		m(\xi) = 4 \sum_{n = 1}^{N-1} (-3)^{n-1} \jap{\xi}{-ns} + 4(-3)^{N-1} \frac{\jap{\xi}{-Ns}}{1 + 3 \jap{\xi}{-s}}
	\end{equation*}
	with $N-1 \leq 1/s < N$. Then the last term is integrable and we infer again by Lemma~\ref{lemma:asympt_kernel_behavior} that the singularity $\abs{x}^{s-1}$ dominates for small $\abs{x}$ (if $(N - 1)s = 1$ then the inverse Fourier transform gives a $\log$-term---see \cite[Proposition 1.2.5]{grafakos_modern}---but the conclusion still holds).
\end{proof}

Lemma~\ref{lemma:asymptotic_altkernel_behavior} implies in particular that $\altoperator{-s}$ is monotone in the same ways as $\bessel{-s}$:

\begin{lemma} \label{lemma:altoperator_is_monotone}
	Let $s \in (0, 1)$. If $f$ and $g$ are continuous and bounded functions on $\R$ with $f \geq g$ and $f(x_0) > g(x_0)$ for some $x_0$, then
	\begin{equation*}
		\altoperator{-s} f > \altoperator{-s} g.
	\end{equation*}
\end{lemma}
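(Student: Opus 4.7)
Setting $\phi := f - g$, the statement reduces to showing that $(\altoperator{-s} \phi)(x) > 0$ for every $x \in \R$ whenever $\phi \in \contspace(\R)$ satisfies $\phi \geq 0$ and $\phi(x_0) > 0$. The plan is to mirror the proof of Lemma~\ref{lemma:monotone_operator}: represent the operator as convolution with $\altkernel{s}$, use continuity of $\phi$ to bound it from below on an open interval about $x_0$, and combine with strict positivity of the kernel.

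Concretely, by continuity of $\phi$ there exist $r, c > 0$ such that $\phi(y) \geq c$ whenever $|y - x_0| < r$. Then for every $x \in \R$,
\begin{equation*}
    (\altoperator{-s} \phi)(x) = \int_\R \altkernel{s}(x - y) \phi(y) \, dy \geq c \int_{x_0 - r}^{x_0 + r} \altkernel{s}(x - y) \, dy,
\end{equation*}
so the problem reduces to showing that $\altkernel{s}(z) > 0$ for every $z \in \R$.

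The main obstacle is upgrading the nonnegativity of $\altkernel{s}$ supplied by Lemma~\ref{lemma:asymptotic_altkernel_behavior} to strict positivity. I would use the integral identity
\begin{equation*}
    \altsymbol(\xi) = \frac{4}{3 + \jap{\xi}{s}} = 4 \int_0^\infty e^{-3t} e^{-t \jap{\xi}{s}} \, dt,
\end{equation*}
which after Fubini and inversion of the Fourier transform yields
\begin{equation*}
    \altkernel{s}(x) = 4 \int_0^\infty e^{-3t} \IFT\bigparanth{e^{-t \jap{\cdot}{s}}}(x) \, dt.
\end{equation*}
The map $y \mapsto (1 + y)^{s/2}$ is a Bernstein function on $[0, \infty)$, since its derivative $(s/2)(1+y)^{s/2-1}$ is completely monotone. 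Bernstein subordination then provides, for each $t > 0$, a probability measure $\sigma_t$ on $(0, \infty)$ satisfying $e^{-t \jap{\xi}{s}} = e^{-t} \int_0^\infty e^{-u \xi^2} \, d\sigma_t(u)$. Inverting the Fourier transform represents $\IFT(e^{-t \jap{\cdot}{s}})(x)$ as a mixture of Gaussians $(4\pi u)^{-1/2} e^{-x^2/(4u)}$ weighted by $e^{-t} \sigma_t$, which is pointwise strictly positive on $\R$. Integrating against the strictly positive factor $e^{-3t}$ then yields $\altkernel{s}(x) > 0$ for every $x \in \R$.

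Combining the strict positivity of $\altkernel{s}$ with the continuity-based lower bound on $\phi$ completes the proof.
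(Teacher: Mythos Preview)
Your argument is correct and follows the route the paper intends: write $\altoperator{-s}$ as convolution with $\altkernel{s}$ and use strict positivity of the kernel together with continuity of $\phi$. The paper gives no explicit proof, only recording that Lemma~\ref{lemma:asymptotic_altkernel_behavior} implies the result in the same way that the properties of $\besselkernel{s}$ imply Lemma~\ref{lemma:monotone_operator}. You correctly note that Lemma~\ref{lemma:asymptotic_altkernel_behavior} states only \emph{nonnegativity} of $\altkernel{s}$, and you supply the missing strict positivity via a subordination argument.

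Two remarks. First, the interchange of the $t$-integral with Fourier inversion is not immediate, since $m \notin L^1(\R)$ and the double $(t,\xi)$-integral is not absolutely convergent; the identity $\altkernel{s}(x) = 4\int_0^\infty e^{-3t}\,\IFT(e^{-t\jap{\cdot}{s}})(x)\,dt$ is cleanest to justify by pairing against a Schwartz function, where absolute convergence does hold. Second, a shorter path is already available from the paper's own proof of Lemma~\ref{lemma:asymptotic_altkernel_behavior}: it is shown there that $\lambda \mapsto m(\sqrt{\lambda})$ is completely monotone, so Bernstein's theorem gives $m(\xi) = \int_{(0,\infty)} e^{-u\xi^2}\,d\nu(u)$ for a nontrivial Borel measure $\nu$ (with no atom at $0$ since $m(\sqrt{\lambda})\to 0$ as $\lambda\to\infty$), and hence $\altkernel{s}(x) = \int_{(0,\infty)} (4\pi u)^{-1/2} e^{-x^2/(4u)}\,d\nu(u) > 0$ in one step, avoiding the two-layer Laplace representation.
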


\begin{lemma} \label{lemma:altoperator_monotone_odd_fnc}
	Let $s \in (0, 1)$ and assume that $f$ is an odd, $P$-periodic and continuous function with $f \geq 0$ on $(\nicefrac{-P}{2}, 0)$ and $f(x_0) > 0$ for some $x_0 \in (\nicefrac{-P}{2}, 0)$. Then on $(\nicefrac{-P}{2}, 0)$ it holds
	\begin{equation*}
		(\altoperator{-s} f)(x) > 0.
	\end{equation*}
\end{lemma}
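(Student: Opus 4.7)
The argument follows the pattern of Lemma~\ref{lemma:greater_than_zero_for_odd_functions} and relies on the properties of $\paltkernel{s}$ collected in Lemma~\ref{lemma:asymptotic_altkernel_behavior}(ii). For a $P$-periodic continuous function $f$ one has the convolution representation
\begin{equation*}
(\altoperator{-s} f)(x) = \int_{-P/2}^{P/2} \paltkernel{s}(x-y) f(y) \, \d{y},
\end{equation*}
so I would begin by splitting this integral at $y = 0$, substituting $y \mapsto -y$ on the positive half, and using $f(-y) = -f(y)$ to fold it into
\begin{equation*}
(\altoperator{-s} f)(x) = \int_{-P/2}^{0} \bigparanth{\paltkernel{s}(x-y) - \paltkernel{s}(x+y)} f(y) \, \d{y}.
\end{equation*}

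Positivity of $\altoperator{-s} f$ on $(-P/2, 0)$ then reduces to the kernel bound $\paltkernel{s}(x-y) > \paltkernel{s}(x+y)$ for all $x, y \in (-P/2, 0)$. By Lemma~\ref{lemma:asymptotic_altkernel_behavior}(ii), $\paltkernel{s}$ is even, $P$-periodic and strictly increasing on $(-P/2, 0)$, hence of the form $\paltkernel{s}(z) = g(\dist(z, P\Z))$ with $g$ strictly decreasing on $[0, P/2]$. The comparison thus amounts to showing $\dist(x-y, P\Z) < \dist(x+y, P\Z)$. Since $x-y \in (-P/2, P/2)$, the left-hand side equals $\bigabs{\abs{x}-\abs{y}}$, while $x+y \in (-P, 0)$ so the right-hand side is $\abs{x}+\abs{y}$ when $x+y > -P/2$ and $P - \abs{x} - \abs{y}$ otherwise. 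In the first subcase the strict inequality is immediate from $\abs{y} > 0$, and in the second it rearranges to $2\max\set{\abs{x}, \abs{y}} < P$, which follows from $\abs{x}, \abs{y} < P/2$.

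Finally, since $f \geq 0$ on $(-P/2, 0)$ with $f(x_0) > 0$, continuity of $f$ ensures $f > 0$ on a neighborhood of $x_0$; the integrand is then pointwise nonnegative on $(-P/2, 0)$ and strictly positive near $y = x_0$, so the integral is strictly positive for every $x \in (-P/2, 0)$. I expect the main (minor) obstacle to be the brief case analysis for $\dist(x+y, P\Z)$, but this is settled cleanly by the strict bounds on $\abs{x}$ and $\abs{y}$; no estimate beyond the monotonicity of $\paltkernel{s}$ is needed.
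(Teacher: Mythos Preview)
Your proposal is correct and follows precisely the argument the paper has in mind: the paper does not write out a proof but states that Lemma~\ref{lemma:altoperator_monotone_odd_fnc} follows from Lemma~\ref{lemma:asymptotic_altkernel_behavior} in the same way as Lemma~\ref{lemma:greater_than_zero_for_odd_functions} follows from the properties of $\pbesselkernel{s}$, which in turn is proved by exactly the folding-and-kernel-comparison argument you have outlined (cf.\ \cite[Lemma~3.6]{ehrnstrom_wahlen}). The case analysis for $\dist(x+y, P\Z)$ is handled correctly.
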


\section{The fKdV equation} \label{sec:fkdv}

In this section we prove existence of highest periodic traveling waves for the steady fKdV equation \eqref{eq:main_steady_fKdV} with parameter $s \in (0, 1)$ considered fixed throughout. In Section~\ref{subsec:traveling_waves_fkdv} we recover information about the magnitude and the sign of derivatives of solutions that satisfy certain periodicity and parity conditions. In Section~\ref{subsec:regularity_fkdv} it is proved that all solutions which have an amplitude strictly smaller than the wave-speed $\wavespeed$ are smooth, and that solutions which attain the maximal amplitude $\wavespeed$ are precisely $s$-Hölder continuous. Finally, existence of solutions by means of bifurcation is proved in Section~\ref{subsec:bifurcation_fkdv}.

We follow \cite{ehrnstrom_wahlen} regarding organization and methods. The main difference is that we here consider the parametrized operator $\bessel{-s}$, thereby obtaining a new relationship between the order of dispersion and the regularity of highest waves. Some results are stated for a period $P \in (0, \infty]$, where we adopt the convention that $P = \infty$ is the solitary case. The interval $[\nicefrac{-P}{2}, \nicefrac{P}{2}]$ with coinciding endpoints is denoted by $\ptorus$.

\subsection{Traveling-wave solutions} \label{subsec:traveling_waves_fkdv}

We begin with a proposition giving bounds for the minima and maxima of solutions. Recall that by a solution $\varphi$ we mean a real-valued continuous and bounded function satisfying the equation \eqref{eq:main_steady_fKdV} on $\R$. Note that
\begin{equation*}
	\bessel{-s} c = \besselkernel{s} \conv c = c \norm{\besselkernel{s}}_{\lebesguespace{1}} = c
\end{equation*}
for every constant $c \in \R$ and every $s \in (0, 1)$, so we have $\bessel{-s} \varphi \geq \bessel{-s} \min \varphi = \min \varphi$ and $\bessel{-s} \varphi \leq \bessel{-s} \max = \varphi \max \varphi$. Inserting this in the steady fKdV equation gives the following.

\begin{proposition} \label{prop:a_priori_inf_sup}
If $\varphi$ is a solution to the steady fKdV equation, then
\begin{equation*}
	\left\{
		\begin{alignedat}{2}
			& 2(\wavespeed - 1) \leq \min \varphi \leq 0 \leq \max \varphi\ \ \textnormal{ or }\ \ \varphi \equiv 2(\wavespeed - 1) \qquad \qquad & \textnormal{ if } \wavespeed \leq 1, \\
			& 0 \leq \min \varphi \leq 2(\wavespeed - 1) \leq \max \varphi\ \ \textnormal{ or }\ \ \varphi \equiv 0 & \textnormal{ if } \wavespeed > 1. \\
		\end{alignedat}
		\right.
	\end{equation*}
\end{proposition}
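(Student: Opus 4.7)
The plan is to combine the normalization $\bessel{-s} c = c$ (which follows from $\jap{0}{-s} = 1$, or equivalently $\|\besselkernel{s}\|_{L^1} = 1$) with the pointwise equation in order to extract a scalar quadratic inequality at the extrema of $\varphi$. Since $\besselkernel{s}$ is nonnegative with unit $L^1$ norm, for any bounded continuous $\varphi$ one immediately has the sandwich
\begin{equation*}
    \inf_{\R} \varphi \;\leq\; (\bessel{-s} \varphi)(x) \;\leq\; \sup_{\R} \varphi \qquad \text{for all } x \in \R.
\end{equation*}
Write $m = \inf \varphi$ and $M = \sup \varphi$, and pick sequences $x_n, y_n \in \R$ with $\varphi(x_n) \to m$ and $\varphi(y_n) \to M$ (we cannot assume attainment since $\varphi$ is merely continuous and bounded, but the argument only uses continuity of the map $t \mapsto \wavespeed t - \tfrac{1}{2} t^2$).

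Next I would solve \eqref{eq:main_steady_fKdV} for the nonlocal term to get $\bessel{-s} \varphi = \wavespeed \varphi - \tfrac{1}{2} \varphi^2$, and evaluate along the chosen sequences. Passing to the limit and using the sandwich above gives
\begin{equation*}
    \wavespeed m - \tfrac{1}{2} m^2 \;\geq\; m \qquad \text{and} \qquad \wavespeed M - \tfrac{1}{2} M^2 \;\leq\; M,
\end{equation*}
which rearrange to $m\bigparanth{m - 2(\wavespeed - 1)} \leq 0$ and $M\bigparanth{M - 2(\wavespeed - 1)} \geq 0$.

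Finally I would split into the two cases on $\wavespeed$ and read off the claimed bounds. If $\wavespeed \leq 1$, then $2(\wavespeed - 1) \leq 0$, so the first inequality forces $m \in [2(\wavespeed - 1), 0]$, while the second forces either $M \geq 0$ or $M \leq 2(\wavespeed - 1)$; the latter combined with $M \geq m \geq 2(\wavespeed - 1)$ squeezes $\varphi$ to the constant $2(\wavespeed - 1)$. The case $\wavespeed > 1$ is treated analogously, with the alternative $M \leq 0$ forcing $\varphi \equiv 0$ through the ordering $m \leq M$ and $m \geq 0$. The only mild subtlety is that $m$ and $M$ need not be attained, but this is handled transparently by the sequential argument since the map $t \mapsto \wavespeed t - \tfrac{1}{2} t^2$ is continuous and the sandwich inequality for $\bessel{-s} \varphi$ holds everywhere; I would not expect any real obstacle beyond this bookkeeping.
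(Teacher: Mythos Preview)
Your proposal is correct and follows exactly the approach sketched in the paper: use $\|\besselkernel{s}\|_{L^1}=1$ to sandwich $\bessel{-s}\varphi$ between $\inf\varphi$ and $\sup\varphi$, insert this into the equation at the extrema, and read off the quadratic inequalities. Your sequential handling of possibly unattained infimum/supremum is a clean way to fill in a detail the paper leaves implicit.
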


Any smooth, $P$-periodic function $f$ can be written as a uniformly convergent Fourier series
\begin{equation*} \label{eq:fourier_series}
    f(x) = \sum_{k \in \Z} \fouriercoeff{f}{k} e^{i \frac{2 \pi k}{P} x}, \qquad \textnormal{with} \qquad \fouriercoeff{f}{k} = \frac{1}{P} \int_{-P/2}^{P/2} f(x) e^{-i \frac{2 \pi k}{P} x}\slot \d{x}.
\end{equation*}
Fourier multipliers act on periodic functions by multiplying the Fourier coefficients of the function with the symbol of the operator. Precisely, the formula
\begin{equation*} \label{eq:fourier_mult_on_periodic_dist}
	\bessel{-s} f = \sum_{k \in \Z} \biggjap{\frac{2 \pi k}{P}}{-s} \fouriercoeff{f}{k} e^{i \frac{2 \pi k}{P} x}
\end{equation*}
is valid for any $f \in \schwartzspace(\ptorus)$ and extends to $\schwartzspacedual$ by duality. Thus, integrating the equation $\varphi^2 = 2\wavespeed \varphi - 2\bessel{-s} \varphi$ over $\ptorus$ we find that $\varphi \in \lebesguespace{2}(\ptorus)$ if $\varphi$ is integrable on $\ptorus$:

\begin{lemma} \label{lemma:a_priori_l2_norm}
	Let $P < \infty$. Then every solution $\varphi \in \lebesguespace{1}(\ptorus)$ to the steady fKdV equation belongs to $\lebesguespace{2}(\ptorus)$, with
	\begin{equation*}
		\norm{\varphi}_{\lebesguespace{2}(\ptorus)}^2 = 2 (\wavespeed - 1) \int_{\ptorus} \varphi \slot \d{x}.
	\end{equation*}
\end{lemma}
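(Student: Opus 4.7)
The plan is to integrate the steady fKdV equation, rewritten in the equivalent form
\begin{equation*}
\varphi^2 = 2\wavespeed \varphi - 2\bessel{-s}\varphi,
\end{equation*}
over the torus $\ptorus$ and exploit the fact that $\bessel{-s}$ is mean-preserving on periodic functions because $\jap{0}{-s} = 1$.

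To handle the a priori regularity $\varphi \in \lebesguespace{2}(\ptorus)$ starting from $\varphi \in \lebesguespace{1}(\ptorus)$, I would first note that Lemma~\ref{lemma:asympt_kernel_behavior} gives $\besselkernel{s} \in \lebesguespace{1}(\R)$, since the singularity $\abs{x}^{s-1}$ at the origin is integrable for $s \in (0,1)$ and the kernel decays exponentially at infinity. Consequently $\pbesselkernel{s} \in \lebesguespace{1}(\ptorus)$, and Young's convolution inequality applied to $\bessel{-s}\varphi = \pbesselkernel{s} \conv \varphi$ shows that $\bessel{-s}\varphi \in \lebesguespace{1}(\ptorus)$. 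Reading the right-hand side of the rewritten equation above, $\varphi^2$ then lies in $\lebesguespace{1}(\ptorus)$, i.e., $\varphi \in \lebesguespace{2}(\ptorus)$.

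The remaining key ingredient is the identity
\begin{equation*}
\int_{\ptorus} \bessel{-s}\varphi \, \d{x} = \int_{\ptorus} \varphi \, \d{x},
\end{equation*}
which I would derive from the normalization $\jap{0}{-s} = 1$. Either one invokes the periodic Fourier multiplier formula from the excerpt and observes that $\bessel{-s}$ preserves the zeroth Fourier coefficient, or one applies Fubini to $\bessel{-s}\varphi = \pbesselkernel{s}\conv\varphi$ after computing
\begin{equation*}
\int_{\ptorus} \pbesselkernel{s}(y) \, \d{y} = \int_{\R} \besselkernel{s}(y) \, \d{y} = \FT(\besselkernel{s})(0) = \jap{0}{-s} = 1.
\end{equation*}
Integrating the rewritten equation over $\ptorus$ and substituting then gives $\norm{\varphi}_{\lebesguespace{2}(\ptorus)}^2 = 2(\wavespeed - 1)\int_{\ptorus} \varphi \, \d{x}$, as claimed.

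There is no genuine obstacle in this lemma; the "hard" part is simply recognizing that the mean-preserving property of $\bessel{-s}$ is built into the normalization of the symbol, so the proof amounts to carefully tracking the zeroth Fourier mode.
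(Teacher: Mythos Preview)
The proposal is correct and follows essentially the same approach as the paper: the paper's argument (given in the paragraph immediately preceding the lemma, with no separate proof environment) is precisely to integrate $\varphi^2 = 2\wavespeed\varphi - 2\bessel{-s}\varphi$ over $\ptorus$ and use that $\bessel{-s}$ preserves the zeroth Fourier coefficient because $\jap{0}{-s} = 1$. Your version is simply more explicit about the $\lebesguespace{1}\to\lebesguespace{2}$ step via Young's inequality and offers the Fubini/convolution route as an alternative, but the core idea is identical.
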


If a solution satisfies $\varphi(x_0) = 0$ at some point $x_0$, then evaluating the equation yields $(\bessel{-s} \varphi)(x_0) = 0$. Therefore, the solution $\varphi$ must either be identically equal to zero or it must change sign in $x_0$.

\begin{lemma} \label{lemma:nodal_properties}
    Let $P < \infty$. Every $P$-periodic, nonconstant and even solution $\varphi \in \contderspace{1}(\R)$ to the steady fKdV equation which is nondecreasing on $(\nicefrac{-P}{2}, 0)$ satisfies
    \begin{equation*}
        \varphi' > 0 \qquad \textnormal{and} \qquad \varphi < \wavespeed
    \end{equation*}
    on $(\nicefrac{-P}{2}, 0)$. If in addition $\varphi \in \contderspace{2}(\R)$, then $\varphi''(0) < 0$ and $\varphi''(\pm P/2) > 0$.
\end{lemma}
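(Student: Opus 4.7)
The plan is to differentiate the steady fKdV equation and exploit the positivity of $\bessel{-s}$ on odd periodic functions (Lemma~\ref{lemma:greater_than_zero_for_odd_functions}) together with integration by parts against the periodic kernel $\pbesselkernel{s}$. Differentiating $-\wavespeed\varphi + \tfrac{1}{2}\varphi^2 + \bessel{-s}\varphi = 0$ once yields
\[
\bessel{-s}\varphi' = (\wavespeed - \varphi)\varphi'.
\]
Since $\varphi$ is even, $P$-periodic, nonconstant and $\contderspace{1}$, the derivative $\varphi'$ is odd, $P$-periodic, continuous, nonnegative on $(\nicefrac{-P}{2}, 0)$, and not identically zero (else $\varphi$ would be constant by oddness). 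Lemma~\ref{lemma:greater_than_zero_for_odd_functions} then gives $\bessel{-s}\varphi' > 0$ on $(\nicefrac{-P}{2}, 0)$, so $(\wavespeed - \varphi)\varphi' > 0$ there. Combined with $\varphi' \geq 0$, this forces both $\varphi' > 0$ and $\varphi < \wavespeed$ on $(\nicefrac{-P}{2}, 0)$.

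For the second-order claim at $0$, differentiating once more gives $\bessel{-s}\varphi'' = (\wavespeed - \varphi)\varphi'' - (\varphi')^2$. Evaluating at $x = 0$ (using $\varphi'(0) = 0$ from oddness) reduces this to $\bessel{-s}\varphi''(0) = (\wavespeed - \varphi(0))\varphi''(0)$. I compute the left-hand side by integration by parts over $(\nicefrac{-P}{2}, \nicefrac{P}{2})$: the boundary terms at $\pm\nicefrac{P}{2}$ vanish because $\varphi'(\pm\nicefrac{P}{2}) = 0$ (oddness + periodicity) and because the even periodic kernel is smooth there with $\pbesselkernel{s}'(\pm\nicefrac{P}{2}) = 0$, while the singularity of $\pbesselkernel{s}$ at $0$ is handled by excising $(-\varepsilon, \varepsilon)$ and letting $\varepsilon \to 0$, noting $\pbesselkernel{s}(\varepsilon)\varphi'(\varepsilon) = \bigo{\varepsilon^{s-1} \cdot \varepsilon} \to 0$. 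This produces
\[
\bessel{-s}\varphi''(0) = -\int_{-P/2}^{P/2} \pbesselkernel{s}'(y)\varphi'(y)\,dy.
\]
By Lemma~\ref{lemma:periodic_kernel} and the strict convexity of $\besselkernel{s}$ from Proposition~\ref{prop:completely_monotone_operator}, $\pbesselkernel{s}'$ is strictly positive on $(\nicefrac{-P}{2}, 0)$ and strictly negative on $(0, \nicefrac{P}{2})$; the first-order result shows $\varphi'$ has the same sign pattern, so the integrand is nonnegative and strictly positive on a set of positive measure, giving $\bessel{-s}\varphi''(0) < 0$. Together with the a priori bounds $\wavespeed - \varphi(0) \geq 0$ (continuity from the first part) and $\varphi''(0) \leq 0$ ($\varphi$ attains a local maximum at $0$), the identity $(\wavespeed - \varphi(0))\varphi''(0) < 0$ forces both factors to be strictly of the needed sign, yielding $\varphi(0) < \wavespeed$ and $\varphi''(0) < 0$.

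The claim $\varphi''(\pm\nicefrac{P}{2}) > 0$ follows by the same argument applied to the translate $\psi(x) = \varphi(x + \nicefrac{P}{2})$, which is even about $0$, $\contderspace{2}$, satisfies the same equation, and attains a local \emph{minimum} at $0$ by the monotonicity from the first part. Now $\psi'$ has the sign pattern opposite to $\varphi'$ on each half-period, so the analogous integral flips sign and one obtains $\bessel{-s}\psi''(0) > 0$; combined with $\wavespeed - \psi(0) > 0$ (since $\psi(0) = \min\varphi \leq \max\varphi = \varphi(0) < \wavespeed$) this gives $\psi''(0) > 0$. The main technical obstacle in the whole proof is justifying the integration by parts despite the singularity $\pbesselkernel{s}(y) \eqsim \abs{y}^{s-1}$ at the origin; this is precisely where the $\contderspace{2}$ hypothesis is used, ensuring $\varphi'(y) = \bigo{\abs{y}}$ near $0$ so that both the excised boundary contribution and the integrand $\pbesselkernel{s}'(y)\varphi'(y) = \bigo{\abs{y}^{s-1}}$ are controlled.
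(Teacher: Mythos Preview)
Your proof is correct and follows essentially the same approach as the paper: differentiate and apply Lemma~\ref{lemma:greater_than_zero_for_odd_functions} for the first-order claim, then integrate by parts against $\pbesselkernel{s}$ (with an $\varepsilon$-excision around the singularity) for the second-order claims. The only notable difference is at $x=\pm P/2$: the paper computes $(\bessel{-s}\varphi'')(P/2)$ directly via the shifted kernel $\pbesselkernel{s}(P/2+y)$, whereas you reduce to the case $x=0$ by the translation $\psi(x)=\varphi(x+P/2)$ --- a clean shortcut that avoids repeating the excision argument in a second location.
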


\begin{proof}
	Differentiating the steady fKdV equation yields
    \begin{equation*}
        (\wavespeed - \varphi) \varphi' = \bessel{-s} \varphi' > 0,
    \end{equation*}
    where the last inequality holds on $(\nicefrac{-P}{2}, 0)$ due to Lemma~\ref{lemma:greater_than_zero_for_odd_functions}. We conclude that $\varphi' > 0$ and $\varphi < \wavespeed$ on $(\nicefrac{-P}{2}, 0)$.
	
    Now assume that $\varphi \in \contderspace{2}(\R)$. Differentiating twice and evaluating in zero we get
    \begin{equation*}
        (\wavespeed - \varphi(0)) \varphi''(0) = (\bessel{-s} \varphi'')(0) = 2 \int_{0}^{P/2} \pbesselkernel{s}(y) \varphi''(y) \slot \d{y},
    \end{equation*}
    since $\varphi'(0) = 0$ by evenness and differentiability of $\varphi$, and because $\pbesselkernel{s}$ and $\varphi''$ are even functions. Then for $\varepsilon > 0$ is is possible to write
    \begin{equation*}
        \begin{aligned}
            \int_{0}^{P/2} \pbesselkernel{s}(y) \varphi''(y) \slot \d{y} = & \int_{0}^{\varepsilon} \pbesselkernel{s}(y) \varphi''(y) \slot \d{y} + \int_{\varepsilon}^{P/2} \pbesselkernel{s}(y) \varphi''(y) \slot \d{y} \\
            = & \int_{0}^{\varepsilon} \pbesselkernel{s}(y) \varphi''(y) \slot \d{y} + \bigg[ \pbesselkernel{s}(y) \varphi'(y) \bigg]_{y = \varepsilon}^{P/2} - \int_{\varepsilon}^{P/2} \pbesselkernel{s}'(y) \varphi'(y) \slot \d{y}
        \end{aligned}
    \end{equation*}
    (recall that $\pbesselkernel{s}$ is smooth outside of the origin). The first term vanishes when $\varepsilon \searrow 0$, because
	\begin{equation*}
		\lim_{\varepsilon \searrow 0}  \bigabs{\int_{0}^{\varepsilon} \pbesselkernel{s}(y) \varphi''(y) \slot \d{y}} \lesssim \norm{\varphi''}_{C(\R)} \lim_{\varepsilon \searrow 0} \int_{0}^{\varepsilon} \abs{y}^{s-1} \slot \d{y} = 0,
	\end{equation*}
	where we have used \eqref{eq:periodic_singularity_kernel_representation} for the period kernel. The second term must also vanish in the limit, since $\varphi'(P/2) = 0$, and since $\varphi'(\varepsilon) \lesssim \varepsilon$ due to $\varphi'(0) = 0$ and $\varphi' \in \contderspace{1}(\R)$. The last term is negative for each $\varepsilon > 0$, since we have proved both $\varphi' > 0$ and $\pbesselkernel{s}' > 0$ on $(\nicefrac{-P}{2}, 0)$. Hence, it is decreasing as $\varepsilon \searrow 0$ and so passing to the limit we arrive at
    \begin{equation*}
        (\wavespeed - \varphi(0)) \varphi''(0) = - 2 \lim_{\varepsilon \searrow 0} \int_{\varepsilon}^{P/2} \pbesselkernel{s}'(y) \varphi'(y) \slot \d{y} < 0.
    \end{equation*}
	That is, $\varphi''(0) < 0$ provided $\varphi < \wavespeed$. Arguing similarly as above, one has
	\begin{equation*}
		(\wavespeed - \varphi(P/2)) \varphi''(P/2) = 2 \bigg( \int_{0}^{P/2 - \varepsilon} + \int_{P/2 - \varepsilon}^{P/2} \bigg) \pbesselkernel{s}(P/2 + y) \varphi''(y) \slot \d{y},
	\end{equation*}
	where the second term vanishes when $\varepsilon \searrow 0$. The first term can be integrated by parts, and passing to the limit we obtain
	\begin{equation*}
		(\wavespeed - \varphi(P/2)) \varphi''(P/2) = - 2\lim_{\varepsilon \searrow 0} \int_{0}^{P/2 - \varepsilon} \pbesselkernel{s}'(P/2 + y) \varphi'(y) \slot \d{y} > 0,
	\end{equation*}
	on account of $K'_{P, s}$ being $P$-periodic and strictly positive on $(\nicefrac{-P}{2}, 0)$, and $\varphi'$ strictly negative on $(0, \nicefrac{P}{2})$. Hence, ${\varphi''(P/2) > 0}$, and by evenness also $\varphi''(-P/2) > 0$.
\end{proof}

The previous lemma does not hold without the assumption of differentiability. Supposing instead that $\varphi \leq \wavespeed$, one can check that an argument analogous to that of \cite[Lemma 5.2]{ehrnstrom_wahlen} implies the following.

\begin{lemma} \label{lemma:smooth_away_from_crest}
	Let $P \in (0, \infty]$. Assume that $\varphi$ is an even, $P$-periodic and nonconstant solution to the steady fKdV equation which is nondecreasing on $(\nicefrac{-P}{2}, 0)$ with $\varphi \leq \wavespeed$. Then $\varphi$ is strictly increasing on $(\nicefrac{-P}{2}, 0)$.
\end{lemma}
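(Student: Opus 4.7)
My plan is to argue by contradiction, substituting differentiation with a centered difference. Assume $\varphi$ is not strictly increasing on $(\nicefrac{-P}{2}, 0)$. Since $\varphi$ is nondecreasing there, one finds $x_1, x_2 \in (\nicefrac{-P}{2}, 0)$ with $x_1 < x_2$ on which $\varphi$ is constant, say $\varphi \equiv c$. Plugging into \eqref{eq:main_steady_fKdV} gives $\bessel{-s}\varphi \equiv \wavespeed c - \tfrac{1}{2} c^2$ on $[x_1, x_2]$, so $\bessel{-s}\varphi$ is constant there.

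For any $\delta \in (0, x_2 - x_1)$, introduce the centered difference
\begin{equation*}
	\widetilde{\psi}_\delta(x) := \varphi(x + \delta/2) - \varphi(x - \delta/2).
\end{equation*}
Because $\varphi$ is even and $P$-periodic, $\widetilde{\psi}_\delta$ is odd, $P$-periodic and continuous; because $\bessel{-s}$ commutes with translations, $\bessel{-s}\widetilde{\psi}_\delta(x) = (\bessel{-s}\varphi)(x + \delta/2) - (\bessel{-s}\varphi)(x - \delta/2)$ vanishes for every $x \in [x_1 + \delta/2, x_2 - \delta/2]$, which by choice of $\delta$ is a nonempty subinterval of $(\nicefrac{-P}{2}, 0)$.

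Next, I will verify that $\widetilde{\psi}_\delta \geq 0$ on $(\nicefrac{-P}{2}, 0)$ and that it is not identically zero there. Monotonicity handles the range $(-P/2 + \delta/2, -\delta/2)$ immediately; the two endpoint strips $(-\delta/2, 0)$ and $(-P/2, -P/2 + \delta/2)$ are treated by reflecting one argument of the difference through the symmetry points $0$ and $-P/2$, respectively (for the latter, combine evenness of $\varphi$ with $P$-periodicity to get symmetry about $-P/2$). If $\widetilde{\psi}_\delta$ vanished identically on $(\nicefrac{-P}{2}, 0)$, then by oddness $\varphi$ would be $\delta$-periodic on $\R$, which, together with its monotonicity on an interval of length $P/2 > \delta$, would force $\varphi$ to be constant, contradicting the hypothesis. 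Hence $\widetilde{\psi}_\delta(x_0) > 0$ for some $x_0 \in (\nicefrac{-P}{2}, 0)$, and Lemma~\ref{lemma:greater_than_zero_for_odd_functions} yields $\bessel{-s}\widetilde{\psi}_\delta > 0$ throughout $(\nicefrac{-P}{2}, 0)$. This contradicts its vanishing on $[x_1 + \delta/2, x_2 - \delta/2]$, completing the argument.

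The main technical obstacle is the boundary verification that $\widetilde{\psi}_\delta \geq 0$ on the strips where one of $x \pm \delta/2$ leaves $[-P/2, 0]$ and must be folded back by combining evenness with $P$-periodicity. The solitary case $P = \infty$ follows a fully analogous but simpler argument, as the wrap-around at $-P/2$ is absent. The hypothesis $\varphi \leq \wavespeed$ is not invoked directly in this centered-difference scheme; it serves as the weakened replacement for the strict bound $\varphi < \wavespeed$ obtained via differentiation in Lemma~\ref{lemma:nodal_properties}, and ensures the statement applies to the limiting highest waves constructed later.
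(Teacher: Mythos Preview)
Your argument is correct and is essentially the approach the paper defers to via \cite[Lemma 5.2]{ehrnstrom_wahlen}: the referenced proof uses the identity \eqref{eq:first_symmetrization}, which is nothing but $\bessel{-s}\widetilde{\psi}_{2h}$ written out through the periodic kernel, and derives the same contradiction from positivity of the integrand. Your packaging of this via Lemma~\ref{lemma:greater_than_zero_for_odd_functions} applied to the odd function $\widetilde{\psi}_\delta$ is the same idea in slightly more abstract form, and your boundary-strip verification (using evenness about $0$ and about $-P/2$) is exactly what is needed to justify the nonnegativity hypothesis of that lemma. Your observation that $\varphi \leq \wavespeed$ is not actually used in the argument is also correct; the hypothesis is carried along for the downstream regularity results rather than for this monotonicity step.
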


\subsection{Regularity of solutions} \label{subsec:regularity_fkdv}

Writing the fKdV equation in the form
\begin{equation} \label{eq:bootstrap_form}
	\varphi = \wavespeed - \sqrt{\wavespeed^2 - 2\bessel{-s} \varphi},
\end{equation}
a bootstrapping argument along the lines of \cite[Theorem 5.1]{ehrnstrom_wahlen}, using that the operator $\bessel{-s}$ is linear and bounded from $L^{\infty}(\R)$ to $\zygmundspace{s}(\R)$ and from $\zygmundspace{\alpha}(\R)$ to $\zygmundspace{\alpha + s}(\R)$, can be used to prove that $\varphi$ is smooth wherever it is strictly below the maximal wave-height:

\begin{lemma} \label{lemma:regularity_1}
	Let $\varphi \leq \wavespeed$ be a solution to the steady fKdV equation. Then $\varphi$ is smooth on every open set where $\varphi < \wavespeed$.
\end{lemma}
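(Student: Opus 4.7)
The plan is to use the resolved form~\eqref{eq:bootstrap_form} together with the smoothing property~\eqref{eq:smoothing_operator} of $\bessel{-s}$, bootstrapping through the Hölder--Zygmund scale. On an open set $U$ where $\varphi < \wavespeed$, the identity $\wavespeed^2 - 2 \bessel{-s}\varphi = (\wavespeed - \varphi)^2$ shows that the argument of the square root in~\eqref{eq:bootstrap_form} is strictly positive, and is bounded away from zero on any relatively compact $V \subset\subset U$. Consequently the map $y \mapsto \wavespeed - \sqrt{\wavespeed^2 - 2 y}$ acts as a smooth, indeed real-analytic, operation on $\bessel{-s}\varphi$ locally on $U$.

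First I would fix a point $x_0 \in U$ and a nested sequence of relatively compact open neighborhoods $V_0 \supset V_1 \supset V_2 \supset \cdots$ of $x_0$ inside $U$, then prove by induction that $\varphi$ has Hölder--Zygmund regularity $k s$ on $V_k$, starting from $\varphi \in L^{\infty}(\R)$. For the induction step I would decompose $\varphi = \chi_k \varphi + (1 - \chi_k) \varphi$ with a smooth cutoff $\chi_k$ equal to one on $V_{k+1}$ and compactly supported in $V_k$. By Lemma~\ref{lemma:asympt_kernel_behavior} the kernel $\besselkernel{s}$ is smooth away from the origin with rapid decay, so $\bessel{-s}((1 - \chi_k) \varphi)$ is smooth on $V_{k+1}$. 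For the main piece, the global estimate~\eqref{eq:smoothing_operator} applied to the compactly supported $\zygmundspace{k s}$-function $\chi_k \varphi$ gives $\bessel{-s}(\chi_k \varphi) \in \zygmundspace{k s + s}(\R)$. Composing with the smooth function $y \mapsto \wavespeed - \sqrt{\wavespeed^2 - 2 y}$, which preserves Hölder--Zygmund regularity since $\zygmundspace{\alpha}$ is a Banach algebra for $\alpha > 0$ and stable under composition with smooth maps, then yields $\varphi \in \zygmundspace{(k+1) s}$ on $V_{k+1}$ and closes the induction.

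Since $k s \to \infty$, $\varphi$ has arbitrarily high Hölder--Zygmund regularity near every point of $U$, and hence $\varphi \in \smoothspace(U)$. The main obstacle is the localization: the smoothing bound~\eqref{eq:smoothing_operator} is a global statement, whereas the strict inequality $\varphi < \wavespeed$ holds only on $U$. The cutoff decomposition above sidesteps this by reducing the work to a global estimate on the compactly supported piece, while the tail $(1 - \chi_k) \varphi$ is convolved with $\besselkernel{s}$ at positive distance from its singularity and is therefore harmless.
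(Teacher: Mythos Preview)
Your proposal is correct and follows precisely the bootstrapping argument via~\eqref{eq:bootstrap_form} and~\eqref{eq:smoothing_operator} that the paper itself indicates (deferring to \cite[Theorem~5.1]{ehrnstrom_wahlen}). One small tightening: to ensure that $\bessel{-s}\bigl((1-\chi_k)\varphi\bigr)$ is smooth on all of $V_{k+1}$ rather than merely on compact subsets, choose $\chi_k \equiv 1$ on a neighborhood of $\overline{V_{k+1}}$ inside $V_k$, so that $\supp(1-\chi_k)$ stays at positive distance from $V_{k+1}$ and the kernel singularity of $\besselkernel{s}$ is never seen.
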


Thus, solutions $\varphi$ satisfying the assumptions of Lemma~\ref{lemma:smooth_away_from_crest} are smooth except possibly in $x = 0$, where smoothness may break down provided that $\varphi(0) = \wavespeed$. As the next lemma shows, such solutions are not continuously differentiable in $x = 0$; in fact, they can be at most $s$-Hölder regular, where $s$ is the dispersion parameter appearing in $\bessel{-s}$.

\begin{proposition} \label{prop:upper_regularity_bound}
	Let $P \in (0, \infty]$. Assume that $\varphi$ is an even, $P$-periodic and nonconstant solution to the steady fKdV equation which is nondecreasing on $(\nicefrac{-P}{2}, 0)$ with $\varphi \leq \wavespeed$. Then
	\begin{equation} \label{eq:lower_bound_crest}
		\wavespeed - \varphi(x) \gtrsim |x|^{s}
	\end{equation}
	uniformly for $|x| \ll 1$. Moreover, if $P < \infty$ then
	\begin{equation} \label{eq:lower_bound_end_interval}
		\wavespeed - \varphi(-P/2) \gtrsim 1.
	\end{equation}	
\end{proposition}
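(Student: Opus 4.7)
The plan is to separate the two assertions of the proposition and handle the regularity bound by distinguishing whether the crest attains the critical value $\wavespeed$. The second bound $\wavespeed - \varphi(-P/2) \gtrsim 1$ follows directly from Lemma \ref{lemma:smooth_away_from_crest}: if one had $\varphi(-P/2) = \wavespeed$, then by strict monotonicity on $(-P/2, 0)$ and $\varphi \leq \wavespeed$ the function $\varphi$ would be identically $\wavespeed$ on $(-P/2, 0]$, contradicting nonconstancy. Hence $\wavespeed - \varphi(-P/2) > 0$, which gives the asserted inequality with an implicit constant depending on the solution.

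For the near-crest bound, if $\wavespeed - \varphi(0) > 0$ then by continuity $\wavespeed - \varphi(x) \geq \tfrac{1}{2}(\wavespeed - \varphi(0)) \gtrsim 1 \gtrsim |x|^s$ for $|x| \ll 1$, settling this trivial case. The essential case is $\varphi(0) = \wavespeed$. Setting $w := \wavespeed - \varphi \geq 0$ and completing the square in the steady fKdV equation gives $w^2 = \wavespeed^2 - 2\bessel{-s}\varphi$; subtracting the equation evaluated at $0$ and using $\int_{-P/2}^{P/2}\pbesselkernel{s} = 1$ together with $w(0) = 0$ yields the identity
\begin{equation*}
    \tfrac{1}{2} w(x)^2 = (\bessel{-s}\varphi)(0) - (\bessel{-s}\varphi)(x) = \int_{-P/2}^{P/2} \pbesselkernel{s}(y)\bigparanth{\varphi(y) - \varphi(x - y)} \d{y}.
\end{equation*}
Averaging this expression with the one obtained by the involution $y \mapsto x - y$, using evenness of $\pbesselkernel{s}$ and $\varphi$, and substituting $u = y - x/2$, produces the symmetric representation
\begin{equation*}
    w(x)^2 = \int \bigparanth{\pbesselkernel{s}(u + x/2) - \pbesselkernel{s}(u - x/2)} \bigparanth{\varphi(u + x/2) - \varphi(u - x/2)} \d{u}.
\end{equation*}
Since both $\pbesselkernel{s}$ and $\varphi$ are even and nonincreasing in $|\cdot|$ on $(-P/2, P/2)$, the two factors share the same sign at each $u$, and the integrand is pointwise nonnegative.

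To extract the lower bound $w(x) \gtrsim |x|^s$, I would restrict the integration to a region where one of the arguments $u \pm x/2$ lies close to the origin (so that the kernel is singular of order $|y|^{s-1}$ by \eqref{eq:periodic_singularity_kernel_representation}) while the other is of order $|x|$. On such a region the kernel difference is bounded below by a multiple of $|x|^{s-1}$, and the monotonicity of $\varphi$ combined with its values near $0$ (where $\varphi = \wavespeed$) and near $x$ (where $\varphi = \wavespeed - w(x)$) should give the $\varphi$-difference $\gtrsim w(x)$. Integrating over a set of measure proportional to $|x|$ then produces $w(x)^2 \gtrsim w(x) |x|^s$, so that dividing by $w(x) > 0$, which holds on $(-P/2, 0) \cup (0, P/2)$ by Lemma~\ref{lemma:smooth_away_from_crest}, yields the result.

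The main obstacle is the quantitative control of the $\varphi$-difference on the chosen region: monotonicity alone provides only nonnegativity, so one must combine it with an a priori continuity modulus for $w$ to absorb the error terms in the $\varphi$-increment at $0$ and $x$. A natural input is the $(s/2)$-Hölder regularity of $w$, which follows from $w^2 = \wavespeed^2 - 2\bessel{-s}\varphi \in \zygmundspace{s}$ via the smoothing estimate \eqref{eq:smoothing_operator} and a square-root loss. Balancing the length of the integration window against this modulus is delicate, and is precisely the bootstrap step that forces the sharp exponent $s$ to appear in the conclusion rather than a smaller fractional power.
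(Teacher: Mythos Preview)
Your argument for \eqref{eq:lower_bound_end_interval} establishes only $\wavespeed - \varphi(-P/2) > 0$ with an implicit constant depending on the particular solution. The paper's statement, and its use in the proof of Theorem~\ref{thm:alt_1_and_2_occur}, require a constant depending only on $P$ and $s$, so that the bound survives passage to a limit along a sequence $(\varphi_n,\wavespeed_n)$ of solutions. A purely qualitative strict inequality cannot deliver this.

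The gap in the crest bound \eqref{eq:lower_bound_crest} is more fundamental. Your symmetric representation is correct and the integrand is nonnegative, but the step claiming the $\varphi$-difference is $\gtrsim w(x)$ on a region of measure $\sim |x|$ does not close. Localizing to $|u \mp x/2| \lesssim \delta$ so that one argument is near $0$ and the other near $x$, the $(s/2)$-H\"older control you propose gives $w(\epsilon)\lesssim|\epsilon|^{s/2}$, which forces $\delta \lesssim w(x)^{2/s}$ for the $\varphi$-increment to remain comparable to $w(x)$. Integrating the kernel singularity $|\epsilon|^{s-1}$ over this window then yields only $w(x)^2 \gtrsim \delta^{s}\,w(x)\eqsim w(x)^{3}$, which is vacuous. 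Any bootstrap of this type is circular: an upper bound on $w$ near $0$ merely shrinks the admissible window and never produces a nontrivial lower bound on $w(x)$.

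The paper's argument is structurally different. One works on $(-P/2,0)$, where $\varphi$ is smooth by Lemmas~\ref{lemma:smooth_away_from_crest} and~\ref{lemma:regularity_1}, differentiates the equation to get $(\wavespeed-\varphi(x))\varphi'(x)=(\bessel{-s}\varphi)'(x)$, and bounds the right-hand side below via Fatou and the symmetrized kernel identity by $\int_{-P/2}^{0}(\pbesselkernel{s}(x-y)-\pbesselkernel{s}(x+y))\varphi'(y)\,dy$. Fixing $x_0\in(-P/2,0)$, restricting both $x$ and $y$ to the subinterval $(x_0/2,x_0/4)$, and replacing $\wavespeed-\varphi(x)$ by the larger $\wavespeed-\varphi(z)$ for any $z\le x$, one obtains
\[
(\wavespeed-\varphi(z))\,\varphi'(x)\;\ge\;C_{P,x_0}\,\big(\varphi(x_0/4)-\varphi(x_0/2)\big),\qquad C_{P,x_0}=\min_{x,y\in(x_0/2,x_0/4)}\big(\pbesselkernel{s}(x-y)-\pbesselkernel{s}(x+y)\big)>0.
\]
Integrating in $x$ over the same interval makes the factor $\varphi(x_0/4)-\varphi(x_0/2)$ appear on the left as well, and it \emph{cancels}. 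What remains is $\wavespeed-\varphi(z)\gtrsim |x_0|\,C_{P,x_0}$, with constants depending only on $P$ and $s$. Choosing $x_0=-P/4$ gives the uniform trough bound; for $|x_0|\ll 1$ the kernel singularity yields $C_{P,x_0}\gtrsim|x_0|^{s-1}$ and hence $\wavespeed-\varphi(x_0)\gtrsim|x_0|^{s}$. No case split on whether $\varphi(0)=\wavespeed$ is needed, and no a priori modulus of continuity for $\varphi$ enters.
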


\begin{proof}
	Assume first that $P < \infty$, and note that for every $h \in (0, \nicefrac{P}{2})$ we have the formula
	\begin{equation} \label{eq:first_symmetrization}
		\begin{aligned}
			&(\bessel{-s} \varphi)(x + h) - (\bessel{-s} \varphi)(x - h) \\
			& = \int_{-P/2}^0 (\pbesselkernel{s}(x - y) - \pbesselkernel{s}(x + y))(\varphi(y + h) - \varphi(y - h)) \slot \d{y}.
		\end{aligned}
	\end{equation}
	 Since $\varphi$ is smooth except possibly in $x = 0$, one has for $x \in (\nicefrac{-P}{2}, 0)$ that
	\begin{equation*}
		\begin{aligned}
			(\wavespeed - \varphi(x)) \varphi'(x) & = (\bessel{-s} \varphi)'(x) \\
			& = \lim_{h \rightarrow 0} \frac{((\bessel{-s} \varphi)(x + h) - (\bessel{-s} \varphi)(x - h))}{2h} \\
			& \geq \liminf_{h \rightarrow 0} \frac{1}{2h} \int_{-P/2}^0 (\pbesselkernel{s}(x - y) - \pbesselkernel{s}(x + y))(\varphi(y + h) - \varphi(y - h)) \slot \d{y} \\
			& \geq \int_{-P/2}^0 (\pbesselkernel{s}(x - y) - \pbesselkernel{s}(x + y)) \varphi'(y) \slot \d{y},
		\end{aligned}
	\end{equation*}
	where we used the formula \eqref{eq:first_symmetrization} in the third step and differentiation under the integral is justified by Fatou's lemma. Fix ${x_0 \in (\nicefrac{-P}{2}, 0)}$ and let $x \in (\frac{x_0}{2}, \frac{x_0}{4})$. Then, with ${z \in [\nicefrac{-P}{2}, x]}$, we have
	\begin{equation} \label{eq:lower_bound_starting_point}
		\begin{aligned}
			(\wavespeed - \varphi(z)) \varphi'(x) & \geq (\wavespeed - \varphi(x)) \varphi'(x) \\
			& \geq \int_{-P/2}^0 (\pbesselkernel{s}(x - y) - \pbesselkernel{s}(x + y)) \varphi'(y) \slot \d{y} \\
			& \geq \int_{x_0/2}^{x_0/4} (\pbesselkernel{s}(x - y) - \pbesselkernel{s}(x + y)) \varphi'(y) \slot \d{y},
		\end{aligned}
	\end{equation}
	since the integrand is strictly positive. Letting
	\begin{equation*}
		C_{P} = \min \set{\pbesselkernel{s}(x - y) - \pbesselkernel{s}(x + y) \setsep x, y \in (\frac{x_0}{2}, \frac{x_0}{4})} > 0,
	\end{equation*} 
	we have
	\begin{equation*}
		(\wavespeed - \varphi(-P/2)) \varphi'(x) \geq C_{P} (\varphi(\frac{x_0}{4}) - \varphi(\frac{x_0}{2})).
	\end{equation*}
	Integrating over $(\frac{x_0}{2}, \frac{x_0}{4})$ and dividing by the difference $\varphi(x_0/4) - \varphi(x_0/2)$ we obtain 
	\begin{equation*}
		(\wavespeed - \varphi(-\frac{P}{2})) \geq \frac{1}{4} C_{P} \abs{x_0}
	\end{equation*}
	and thus \eqref{eq:lower_bound_end_interval} by choosing $x_0 = -P/4$, say.
	
	Towards proving \eqref{eq:lower_bound_crest}, note that by the mean value theorem and \eqref{eq:periodic_singularity_kernel_representation} we have
	\begin{equation*}
		\pbesselkernel{s}(x - y) - \pbesselkernel{s}(x + y) \geq -2y \pbesselkernel{s}'\bigparanth{x_0} \gtrsim \abs{x_0}^{s-1}
	\end{equation*}
	uniformly over $x, y \in (x_0/2, x_0/4)$ with $\abs{x_0} \ll 1$. Inserting the above in \eqref{eq:lower_bound_starting_point} yields
	\begin{equation*}
		(\wavespeed - \varphi(z)) \varphi'(x) \gtrsim |x_0|^{s-1} (\varphi(x_0/4) - \varphi(x_0/2)).
	\end{equation*}
	Integrating this inequality over $(x_0/2, x_0/4)$ with respect to $x$, dividing by the (positive) difference $(\varphi(x_0/4) - \varphi(x_0/2)$, and setting $z = x_0$, we obtain
	\begin{equation*}
		(\wavespeed - \varphi(x_0) \gtrsim (x_0/4 - x_0/2) |x_0|^{s - 1} \gtrsim |x_0|^s,
	\end{equation*}
	uniformly for $|x_0| \ll 1$. The estimate \eqref{eq:lower_bound_crest} now follows by evenness of $\varphi$. Moreover, \eqref{eq:lower_bound_crest} holds in the solitary case $P = \infty$ as well, since the estimate can be chosen uniformly for large $P$, and in the limit one can use the same properties for $\besselkernel{s}$.
\end{proof}

Proposition~\ref{prop:upper_regularity_bound} provides an upper bound for the regularity of solutions which attains the value $\wavespeed$ from below in $x = 0$. In Theorem~\ref{thm:regularity_2} we prove that solutions are at least globally $s$-Hölder regular, with the precise regularity  attained in $x = 0$ in the case $\varphi(0) = \wavespeed$.

\begin{theorem} \label{thm:regularity_2}
	Let $P \in (0, \infty]$, and let $\varphi \leq \wavespeed$ be an even and nonconstant solution to the steady fKdV equation which is nondecreasing on $(\nicefrac{-P}{2}, 0)$ and with $\varphi(0) = \wavespeed$. Then $\varphi \in C^{0, s}(\R)$. Moreover,
	\begin{equation} \label{eq:exact_sigma_holder_at_crest}
		\wavespeed - \varphi(x) \eqsim |x|^s
	\end{equation}
	uniformly for $\abs{x} \ll 1$.
\end{theorem}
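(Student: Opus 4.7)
The lower bound $\wavespeed - \varphi(x) \gtrsim |x|^s$ for $|x| \ll 1$ is already provided by Proposition~\ref{prop:upper_regularity_bound}, so the task reduces to the matching upper bound $\wavespeed - \varphi(x) \lesssim |x|^s$ near the crest. Combined with the smoothness of $\varphi$ on $(\nicefrac{-P}{2}, 0)$ from Lemma~\ref{lemma:regularity_1} and Lemma~\ref{lemma:smooth_away_from_crest}, this sharp local bound will imply the global statement $\varphi \in \holderspace{0, s}(\R)$ together with the asymptotic~\eqref{eq:exact_sigma_holder_at_crest}. The starting identity
\begin{equation*}
    (\wavespeed - \varphi(x))^2 = 2\bigl[(\bessel{-s}\varphi)(0) - (\bessel{-s}\varphi)(x)\bigr]
\end{equation*}
is obtained by completing the square in~\eqref{eq:main_steady_fKdV} at the crest, and in particular forces $\bessel{-s}\varphi$ to attain its maximum at the origin.

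Since $\bessel{-s}$ maps $L^{\infty}(\R)$ boundedly into $\holderspace{0, s}(\R)$ by~\eqref{eq:smoothing_operator}, the right-hand side above is of order $|x|^s$ and consequently $\wavespeed - \varphi(x) \lesssim |x|^{s/2}$. This base estimate promotes to $\varphi \in \holderspace{0, s/2}(\R)$ via Lemma~\ref{lemma:regularity_1}. From here I would run a bootstrap: whenever $\varphi \in \holderspace{0, \alpha}(\R)$ globally with $\alpha \in (0, s)$, the smoothing property~\eqref{eq:smoothing_operator} gives $\bessel{-s}\varphi \in \zygmundspace{\alpha + s}(\R)$, and evenness of $\bessel{-s}\varphi$ kills the linear term in its Taylor expansion at the origin, so that $(\bessel{-s}\varphi)(0) - (\bessel{-s}\varphi)(x) \lesssim |x|^{\alpha + s}$, both in the subcritical regime $\alpha + s \leq 1$ and, by Taylor's theorem, in the supercritical regime $\alpha + s > 1$. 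Feeding this back into the identity above upgrades the global Hölder exponent to $(\alpha + s)/2$, and iteration of the contraction $\alpha \mapsto (\alpha + s)/2$ towards its fixed point $s$ yields $\varphi \in \holderspace{0, s - \varepsilon}(\R)$ for every $\varepsilon > 0$.

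The main obstacle will be closing the iteration at the endpoint $\alpha = s$, since the scheme above always incurs an arbitrarily small but positive loss. To remove this loss I would return to the integral representation
\begin{equation*}
    (\bessel{-s}\varphi)(0) - (\bessel{-s}\varphi)(x) = \int_{-P/2}^{P/2} \pbesselkernel{s}(y)\bigl[\varphi(y) - \varphi(y + x)\bigr]\, dy,
\end{equation*}
use evenness of $\varphi$ and $\pbesselkernel{s}$ to rewrite the integrand as a second symmetric difference of $\varphi$ weighted against $\pbesselkernel{s}$, and split the domain at $|y| = 2|x|$. On the inner piece the singular asymptotic $\pbesselkernel{s}(y) \eqsim |y|^{s - 1}$ from~\eqref{eq:periodic_singularity_kernel_representation} combined with the near-sharp Hölder control from the bootstrap produces, after symmetrisation, a contribution of exact order $|x|^{2s}$; the apparent $\varepsilon$-loss is reabsorbed because the symmetric difference enjoys better cancellation than the raw one-sided difference. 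On the outer piece the smoothness of $\pbesselkernel{s}$ away from zero, the monotonicity of $\varphi$ on $(\nicefrac{-P}{2},0)$ from Lemma~\ref{lemma:smooth_away_from_crest}, and the sign and monotonicity of $\pbesselkernel{s}'$ from Lemma~\ref{lemma:periodic_kernel} permit an integration by parts giving a matching $O(|x|^{2s})$ estimate. Summing the two pieces produces $(\bessel{-s}\varphi)(0) - (\bessel{-s}\varphi)(x) \lesssim |x|^{2s}$ and hence $\wavespeed - \varphi(x) \lesssim |x|^s$. Together with Proposition~\ref{prop:upper_regularity_bound} this establishes~\eqref{eq:exact_sigma_holder_at_crest}, and a final application of Lemma~\ref{lemma:regularity_1} and Lemma~\ref{lemma:smooth_away_from_crest} extends the sharp crest behaviour to the global statement $\varphi \in \holderspace{0, s}(\R)$.
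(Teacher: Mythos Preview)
There is a genuine gap in your argument: you repeatedly conflate H\"older regularity \emph{at the crest} with \emph{global} H\"older regularity. The identity $(\wavespeed-\varphi(x))^2 = 2[(\bessel{-s}\varphi)(0)-(\bessel{-s}\varphi)(x)]$ only controls the one-sided seminorm
\[
\seminorm{\varphi}_{\holderspace{0,\alpha}_0} = \sup_{h\neq 0}\frac{|\varphi(h)-\varphi(0)|}{|h|^\alpha},
\]
not the full seminorm $\sup_{x\neq y}|\varphi(x)-\varphi(y)|/|x-y|^\alpha$. Lemma~\ref{lemma:regularity_1} gives qualitative smoothness on $\{\varphi<\wavespeed\}$ but no uniform H\"older bound as one approaches the crest, so it cannot ``promote'' a crest estimate to a global one. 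Consequently your bootstrap $\alpha\mapsto(\alpha+s)/2$ only improves $\seminorm{\varphi}_{\holderspace{0,\alpha}_0}$, and neither your iteration nor your final step (invoking Lemmas~\ref{lemma:regularity_1} and~\ref{lemma:smooth_away_from_crest} to pass from the crest asymptotic to $\varphi\in\holderspace{0,s}(\R)$) is justified.

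The paper treats the two regularities separately. For the crest it runs essentially your bootstrap and then closes at $\alpha=s$ by proving a bound $\int|\besselkernel{s}(x+y)+\besselkernel{s}(x-y)-2\besselkernel{s}(y)|\,|y|^\alpha\,dy \le C|x|^{2\alpha}$ that is \emph{uniform} in $\alpha\in[0,s]$, allowing $\alpha\nearrow s$. For global regularity it works instead with the two-point identity
\[
(2\wavespeed-\varphi(x+h)-\varphi(x-h))(\varphi(x+h)-\varphi(x-h)) = 2[(\bessel{-s}\varphi)(x+h)-(\bessel{-s}\varphi)(x-h)],
\]
first obtaining global $\holderspace{0,\alpha}$ for all $\alpha<s$ (with an extra argument to cross the threshold $\alpha=\tfrac12$ when $s>\tfrac12$), and then reaching $\alpha=s$ by a delicate interpolation between the global $\holderspace{0,\alpha}$ control and the sharp crest estimate, using also the lower bound $\wavespeed-\varphi(x)\gtrsim|x|^s$ from Proposition~\ref{prop:upper_regularity_bound} in the denominator. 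Your proposal contains no analogue of this second half.
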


\begin{proof}
    We show first that the solution $\varphi$ is $\alpha$-Hölder continuous in $0$ for every $\alpha < s$. Taking the difference between the steady fKdV equation evaluated in two points $x$ and $y$, we obtain the formula
	\begin{equation} \label{eq:point_comparison_form}
		(2 \wavespeed - \varphi(y) - \varphi(x))(\varphi(y) - \varphi(x)) = 2\bigparanth{(\bessel{-s}\varphi)(y) - (\bessel{-s}\varphi)(x)},
	\end{equation}
	and using that $\varphi(0) = \wavespeed$ this can be written as
    \begin{equation} \label{eq:holder_bootstrap_in_0}
        \begin{aligned}
            (\wavespeed - \varphi(x))^2 & = 2 \bigparanth{(\bessel{-s}\varphi)(0) - (\bessel{-s}\varphi)(x)} \\
            & = \int_{\R} (\besselkernel{s}(x + y) + \besselkernel{s}(x - y) - 2 \besselkernel{s}(y)) (\varphi(0) - \varphi(y)) \slot \d{y}.
        \end{aligned}
	\end{equation}
	Owing to Lemma~\ref{lemma:asympt_kernel_behavior} the kernel $\besselkernel{s}$ may be split into singular and regular parts according to
	\begin{equation} \label{eq:singular_regular_kernel}
		\besselkernel{s}(x) = C_s \abs{x}^{s-1} + \besselkernelreg{s}(x),
	\end{equation}
	where $\besselkernelreg{s}(x)$ is continuously differentiable with
	\begin{equation} \label{eq:k_reg_first_derivative}
		\abs{\besselkernelreg{s}'(x)} \lesssim (1 + \abs{x})^{s-2}
	\end{equation}
	and furthermore
	\begin{equation} \label{eq:k_reg_second_derivative}
		\left\{
			\begin{alignedat}{2}
				\abs{\besselkernelreg{s}''(x)} & = \bigo{\abs{x}^{s-1}} && \qquad \abs{x} < 1, \\
				\abs{\besselkernelreg{s}''(x)} & \lesssim (1 + \abs{x})^{s-3} && \qquad \abs{x} \geq 1.
			\end{alignedat}
		\right.
	\end{equation}
	Note that by the mean value theorem,
	\begin{equation*} \label{eq:first_difference_regular_kernel}
		\abs{\besselkernelreg{s}(y + x) - \besselkernelreg{s}(y)} \leq \abs{x} \int_0^1 \abs{\besselkernelreg{s}'(y + tx)} \slot \d{t} = \abs{x} \remainder{x}{1}(y)
	\end{equation*}
	where $\remainder{x}{1}(y)$ denotes the integral part. Similarly, we have
	\begin{equation*} \label{eq:second_order_kernel}
		\abs{\besselkernelreg{s}(y+x) + \besselkernelreg{s}(y-x) - 2\besselkernelreg{s}(y)} \leq \abs{x}^2 \int_0^1 \int_0^1 2t \abs{\besselkernelreg{s}''(y - tx + 2rtx)} \slot \d{r} \d{t} = \abs{x}^2 \remainder{x}{2}(y).
	\end{equation*}
	We insert \eqref{eq:singular_regular_kernel} in \eqref{eq:holder_bootstrap_in_0} and estimate each part. For the singular term one has
	\begin{equation} \label{eq:singular_part_first_bootstrap}
		\begin{aligned}
			& C_s \int_{\R} \bigabs{\abs{x + y}^{s-1} + \abs{x - y}^{s-1} - 2 \abs{y}^{s-1}} (\varphi(0) - \varphi(y)) \slot \d{y} \\
			& \leq 2 C_s \norm{\varphi}_{\lebesguespace{\infty}} \abs{x}^s \int_{\R} \bigabs{\abs{1 + t}^{s-1} + \abs{1 - t}^{s-1} - 2 \abs{t}^{s-1}} \slot \d{t} \lesssim \abs{x}^s,
		\end{aligned}
	\end{equation}
	where we have used that the integral in the last step converges for every $s \in (0, 1)$ since the integrand is $\bigo{\abs{t}^{s-3}}$ as $\abs{t} \to \infty$. The regular part can be estimated by
	\begin{equation} \label{eq:regular_part_first_bootstrap}
		\begin{aligned}
			& \int_{\R} \bigabs{\besselkernelreg{s}(x + y) + \besselkernelreg{s}(x - y) - 2 \besselkernelreg{s}(y)} (\varphi(0) - \varphi(y)) \slot \d{y} \\
			& \lesssim \norm{\varphi}_{\lebesguespace{\infty}} \abs{x}^2 \int_{\R} \remainder{x}{2}(y) \slot \d{y} \lesssim \abs{x}^2,
		\end{aligned}
	\end{equation}
	where the integral of $\remainder{x}{2}(y)$ is uniformly bounded for $\abs{x} \ll 1$ in view of \eqref{eq:k_reg_second_derivative}. Inserting \eqref{eq:singular_part_first_bootstrap} and \eqref{eq:regular_part_first_bootstrap} in \eqref{eq:holder_bootstrap_in_0} yields $(\wavespeed - \varphi(x))^2 \lesssim \abs{x}^s$. This implies that $\varphi$ is at least $\frac{s}{2}$-Hölder continuous in ${x=0}$. Using this information, the term $\varphi(0) - \varphi(y)$ can now be bounded from above by $|y|^{\frac{s}{2}}$ in \eqref{eq:singular_part_first_bootstrap}, giving $\frac{s/2 + s}{2}$-Hölder continuity of $\varphi$ in ${x=0}$ in the same way. Iterating this argument proves that $\varphi$ is $\alpha$-Hölder regular in ${x=0}$ for every $\alpha < s$.

    We show $s$-Hölder regularity in $x = 0$. To this end, we claim that there is a constant $C$ which is independent of $\alpha$ such that
	\begin{equation*}
		\int_{\R} \bigabs{\besselkernel{s}(x + y) + \besselkernel{s}(x - y) - 2 \besselkernel{s}(y)} |y|^{\alpha} \slot \d{y} \leq C |x|^{2 \alpha}
	\end{equation*}
	for all $|x| \leq 1$ and all $0 \leq \alpha \leq s$. Indeed, for the singular part we have
	\begin{equation*}
		\begin{aligned}
			& C_s \int_{\R} \bigabs{\abs{x + y}^{s-1} + \abs{x - y}^{s-1} + 2 \abs{y}^{s-1}} \abs{y}^\alpha \slot \d{y} \\
            & = C_s \abs{x}^{s + \alpha} \int_{\R} \bigabs{|1 + t|^{s-1} + |1 - t|^{s-1} - 2 |t|^{s-1}} |t|^{\alpha} \slot \d{t} \lesssim \abs{x}^{s + \alpha} \leq \abs{x}^{2\alpha},
		\end{aligned}
	\end{equation*}
	uniformly for $\alpha \in [0, s]$, where in the last step it was used that $\abs{x} \leq 1$. Moreover, the regular part of the kernel can be bounded according to
    \begin{equation*}
            \int_{\R} \bigabs{\besselkernelreg{s}(x + y) + \besselkernelreg{s}(x - y) - 2 \besselkernelreg{s}(y)} |y|^\alpha \slot \d{y} \leq |x|^2 \int_{\R} \remainder{x}{2}(y) |y|^\alpha \slot \d{y} \lesssim \abs{x}^2,
    \end{equation*}
	and for $\abs{x} \leq 1$ we have $\abs{x}^2 \leq \abs{x}^{2\alpha}$. It was shown above that $\varphi$ is $\alpha$-Hölder continuous in the origin for every $\alpha \in [0, s)$. Hence,
	\begin{equation*}
		\begin{aligned}
			(\varphi(0) - \varphi(x))^2 & = \int_{\R} \big( \besselkernel{s}(x + y) + \besselkernel{s}(x - y) - 2 \besselkernel{s}(y) \big) (\varphi(0) - \varphi(y)) \slot \d{y} \\
			& \leq \seminorm{\varphi}_{\holderspace{0, \alpha}_0} \int_{\R} \bigabs{\besselkernel{s}(x + y) + \besselkernel{s}(x - y) - 2 \besselkernel{s}(y)} |y|^\alpha \slot \d{y} \\
			& \lesssim \seminorm{\varphi}_{\holderspace{0, \alpha}_0} |x|^{2 \alpha},
		\end{aligned}
	\end{equation*}
	where 
	\begin{equation*}
		\seminorm{\varphi}_{\holderspace{0, \alpha}_0(\R)} = \sup_{\substack{h \in \R\\ h \neq 0}} \frac{|\varphi(h) - \varphi(0)|}{|h|^{\alpha}}.
	\end{equation*}
	Dividing by $|x|^{2 \alpha}$ and passing to supremum yields $\seminorm{\varphi}_{\holderspace{0, \alpha}_0} \lesssim 1$ uniformly over $\alpha \in [0, s)$. We let $\alpha \nearrow s$, and combined with \eqref{eq:lower_bound_crest} this proves \eqref{eq:exact_sigma_holder_at_crest}.

    As in \cite{ehrnstrom_wahlen}, to prove global $\alpha$-Hölder regularity for some $\alpha \in (0, 1)$ it suffices to show that
	\begin{equation*} \label{eq:global_holder_regularity_reduction}
		\sup_{0<h<\abs{x}<\delta} \frac{|\varphi(x + h) - \varphi(x - h)|}{h^\alpha} < \infty
	\end{equation*}
	for some $\delta > 0$ (recall that $\varphi(x + y) - \varphi(x - y)$ is symmetric in $x$ and $y$ and $\varphi$ is smooth outside of the origin). We proceed to show that $\varphi \in \holderspace{0, \alpha}(\R)$ for every $\alpha < s$. So assume that $0<h<x<\delta$ for some $\delta \ll 1$, where $x$ can be taken positive without loss of generality. Since
	\begin{equation} \label{eq:global_bootstrap}
		\begin{aligned}
			& (\varphi(x + h) - \varphi(x - h))^2 \\
			& \leq \bigabs{(2\wavespeed - \varphi(x + h) - \varphi(x - h))(\varphi(x + h) - \varphi(x - h))} \\
			& = 2 \bigabs{(\bessel{-s}\varphi)(x + h) - (\bessel{-s}\varphi)(x - h)},
		\end{aligned}
	\end{equation}
	and $\bessel{-s}$ maps $\lebesguespace{\infty}$ to $\holderspace{0, s}$ and $\zygmundspace{\alpha}$ to $\zygmundspace{\alpha + s}$, we obtain that $\varphi$ is at least $\alpha$-Hölder regular for every $\alpha < s$ if $s \leq 1/2$ and $\alpha = 1/2$ if $s > 1/2$. Consequently, for $s > 1/2$ we need to pass the threshold $\alpha = 1/2$ in the iteration procedure of \eqref{eq:global_bootstrap}. So assume that $s > 1/2$ and that $\varphi \in \holderspace{0, \alpha}$ with $\alpha + s > 1$. Note that for a function $f \in \holderspace{1, \beta}$ with $\beta \in (0, 1)$ and $f'(0) = 0$, one has
	\begin{equation*} \label{eq:holder_larger_than_one}
		\abs{f(x) - f(y)} = \abs{x-y} \abs{f'(\zeta) - f'(0)} \lesssim \abs{x-y} \abs{\zeta}^{\beta}
	\end{equation*}
	for some $\zeta \in (x, y)$. Hence,
	\begin{equation*}
		\bigabs{(\bessel{-s}\varphi)(x + h) - (\bessel{-s}\varphi)(x - h)} \lesssim h \abs{\zeta}^{\fraction{\alpha + s}},
	\end{equation*}
	with $\zeta \in (x-h, x+h)$ and $\fraction{\alpha + s}$ being the fractional part of $\alpha + s$. Inserting this in \eqref{eq:global_bootstrap} yields
	\begin{equation} \label{eq:first_interpolation_nonsharp}
		\begin{aligned}
			\abs{\varphi(x + h) - \varphi(x - h)} & \lesssim \frac{h \abs{\zeta}^{\fraction{\alpha + s}}}{2\wavespeed - \varphi(x + h) - \varphi(x - h)} \\
			& \lesssim \frac{h \abs{x+h}^{\fraction{\alpha + s}}}{\abs{x+h}^s + \abs{x-h}^s} \\
			& \lesssim h \abs{x+h}^{\alpha - 1}
		\end{aligned}
	\end{equation}
	where we have used the estimate \eqref{eq:lower_bound_crest} from Proposition~\ref{prop:upper_regularity_bound} in the second step, and in the last step that $\fraction{\alpha + s} - s = \alpha - 1$. Now we interpolate between \eqref{eq:first_interpolation_nonsharp} and the exact $s$-Hölder regularity in the origin. Precisely, with $\sigma \in (0, 1)$ one has
	\begin{equation*}
		\begin{aligned}
			\frac{\abs{\varphi(x + h) - \varphi(x - h)}}{h^\sigma} & \leq \frac{\abs{\varphi(x + h) - \varphi(x - h)}^{\sigma}}{h^\sigma} \abs{\wavespeed - \varphi(x + h)}^{1 - \sigma} \\
			& \lesssim \abs{x + h}^{\sigma(\alpha - 1) + (1 - \sigma)s}.
		\end{aligned}
	\end{equation*}
	This is bounded whenever
	\begin{equation*}
		\sigma \leq \frac{s}{1+s-\alpha},
	\end{equation*}
	and we choose the interpolation parameter $\sigma$ such that equality holds. Hence,
	\begin{equation*}
		\abs{\varphi(x + h) - \varphi(x - h)} \lesssim h^{\frac{s}{1+s-\alpha}}.
	\end{equation*}
	Iterating this argument, one obtains in each step for $\varphi \in \holderspace{0, \alpha}$ that $\varphi$ is $\frac{s}{1+s-\alpha}$-Hölder regular. The regularity is therefore increased in each iteration and tending to $s$, proving $\varphi \in \holderspace{0, \alpha}(\R)$ for every $\alpha < s$.
    
    We now prove $\varphi \in \holderspace{0, s}(\R)$. To this end, note that the difference in the right-hand side of \eqref{eq:point_comparison_form} can also be written as
	\begin{equation} \label{eq:second_symmetry_identity}
			(\bessel{-s}\varphi)(x + h) - (\bessel{-s}\varphi)(x - h) = \int_{-\infty}^{0} ( \besselkernel{s}(y + h) - \besselkernel{s}(y - h)) (\varphi(y - x) - \varphi(y + x) \slot \d{y}.
	\end{equation}
	Let $0<h<x<\delta$ for some $\delta \ll 1$. Since
	\begin{equation*}
		2\wavespeed - \varphi(x + h) - \varphi(x - h) \geq \wavespeed - \varphi(x + h) \geq \wavespeed - \varphi(x),
	\end{equation*}
	we have with \eqref{eq:point_comparison_form} and \eqref{eq:second_symmetry_identity} that
	\begin{equation} \label{eq:interpolation_startingpoint}
		\begin{aligned}
			& (\wavespeed - \varphi(x)) \abs{\varphi(x + h) - \varphi(x - h)} \\
			& \leq 2 \int_{-\infty}^0 \abs{\besselkernel{s}(y + h) - \besselkernel{s}(y - h)} \abs{\varphi(y - x) - \varphi(y + x)} \slot \d{y}.
		\end{aligned}
	\end{equation}
	To estimate the factor $\abs{\varphi(y - x) - \varphi(y + x)}$, we interpolate between the global $\holderspace{0, \alpha}$-regularity (for $\alpha < s$) and the sharp $\holderspace{0, s}$-regularity in $x = 0$. That is, between
	\begin{equation} \label{eq:interpolation_from_global}
		|\varphi(y - x) - \varphi(y + x)| \lesssim \norm{\varphi}_{\holderspace{0, \alpha}} \min(x^{\alpha}, \abs{y}^{\alpha})
	\end{equation}
	for every choice of $\alpha \in (0, s)$, and
	\begin{equation} \label{eq:interpolation_from_local}
		\begin{aligned}
			\abs{\varphi(y - x) - \varphi(y + x)} & \leq  \abs{\wavespeed - \varphi(y - x)} + \abs{\wavespeed - \varphi(y + x)} \\
			& \lesssim \seminorm{\varphi}_{\holderspace{0, s}_0} \max(x^{s}, \abs{y}^{s}).
		\end{aligned}
	\end{equation}
	Interpolation of \eqref{eq:interpolation_from_global} and \eqref{eq:interpolation_from_local} over a parameter $\eta$ gives
	\begin{equation} \label{eq:interpolation}
		\abs{\varphi(y - x) - \varphi(y + x)} \lesssim \norm{\varphi}_{\holderspace{0, \alpha}}^\eta \min(x, \abs{y})^{\alpha \eta} \max(x, \abs{y})^{s(1 - \eta)},
	\end{equation}
	with $(\alpha, \eta) \in (0, s) \times [0, 1]$. The integral in the right-hand side of \eqref{eq:interpolation_startingpoint} can be split in the singular and regular parts of the kernel $\besselkernel{s}$. Inserting \eqref{eq:interpolation} in the integral with the singular term yields
	\begin{equation} \label{eq:singular_global_estimate}
		\begin{aligned}
			& C_s \int_{-\infty}^0 \bigabs{\abs{y + h}^{s-1} - \abs{y - h}^{s-1}} \abs{\varphi(y - x) - \varphi(y + x)} \slot \d{y} \\
			& \lesssim \norm{\varphi}_{\holderspace{0, \alpha}}^\eta \int_{-\infty}^{0} \bigabs{\abs{y + h}^{s-1} - \abs{y - h}^{s-1}} \min(x, \abs{y})^{\alpha \eta} \max(x, \abs{y})^{s(1 - \eta)} \slot \d{y} \\
			& = \norm{\varphi}_{\holderspace{0, \alpha}}^\eta x^{\alpha \eta} \int_{-\infty}^{-x} \bigabs{\abs{y + h}^{s-1} - \abs{y - h}^{s-1}} \abs{y}^{s(1 - \eta)} \slot \d{y} \\
			&\eqindent + \norm{\varphi}_{\holderspace{0, \alpha}}^\eta x^{s(1 - \eta)} \int_{-x}^{0} \bigabs{\abs{y + h}^{s-1} - \abs{y - h}^{s-1}} \abs{y}^{\alpha \eta} \slot \d{y} \\
			& = \norm{\varphi}_{\holderspace{0, \alpha}}^\eta x^{\alpha \eta} h^{s + s(1-\eta)} \int_{-\infty}^{-\frac{x}{h}} \bigabs{\abs{t + 1}^{s-1} - \abs{t - 1}^{s-1}} \abs{t}^{s(1 - \eta)} \slot \d{t} \\
			& \eqindent + \norm{\varphi}_{\holderspace{0, \alpha}}^\eta x^{s(1 - \eta)} h^{s + \alpha \eta} \int_{-\frac{x}{h}}^{0} \bigabs{\abs{t + 1}^{s-1} - \abs{t - 1}^{s-1}} \abs{t}^{\alpha \eta} \slot \d{t}.
		\end{aligned}
	\end{equation}
	For the integrand in the second last line we have the identity
	\begin{equation*}
		\bigabs{\abs{t + 1}^{s-1} - \abs{t - 1}^{s-1}} \lesssim \abs{t}^{s-2}
	\end{equation*}
	for large $t$. Thus, we need to choose $\eta$ such that $s-2+s(1-\eta) < -1$ for convergence. But this is possible for every $s \in (0, 1)$ by requiring $\eta > 2 - \frac{1}{s}$. For the integral in the last line, one can show by the mean value theorem and a direct calculation that
	\begin{equation*}
		\int_{-z}^0 \bigabs{\abs{t + 1}^{s-1} - \abs{t - 1}^{s-1}} \abs{t}^{\alpha \eta} \slot \d{t} \lesssim 1 + z^{s - 1 + \alpha \eta}
	\end{equation*}
	for $z > 0$ whenever $s - 2 + \alpha \eta \neq -1$ (a case which can always be avoided by choosing ${\eta \neq (1 - s) / \alpha}$). Using this with $z = x/h$, we find
	\begin{equation*}
		\begin{aligned}
			& x^{s(1 - \eta)} h^{s + \alpha \eta} \int_{-\frac{x}{h}}^{0} \bigabs{\abs{t + 1}^{s-1} - \abs{t - 1}^{s-1}} \abs{t}^{\alpha \eta} \slot \d{t} \\
			\slot & \lesssim x^{\alpha \eta + s(1 - \eta)} h^s \bigparanth{\frac{h}{x}}^{\alpha \eta} \bigparanth{1 + \bigparanth{\frac{x}{h}}^{s - 1 + \alpha \eta}} = x^{\alpha \eta + s(1 - \eta)} h^s \bigparanth{\bigparanth{\frac{h}{x}}^{\alpha \eta} + \bigparanth{\frac{h}{x}}^{1 - s}} \\
			& \lesssim x^{\alpha \eta + s(1 - \eta)} h^s,
		\end{aligned}
	\end{equation*}
	uniformly for $0 < h < x$. We conclude that
	\begin{equation*}
		C_s\int_{-\infty}^0 \bigabs{\abs{y + h}^{s-1} - \abs{y - h}^{s-1}} \abs{\varphi(y - x) - \varphi(y + x)} \slot \d{y} \lesssim \norm{\varphi}_{\holderspace{0, \alpha}}^\eta x^{\alpha \eta + s(1 - \eta)} h^s
	\end{equation*}
	uniformly for $0 < h < x$, with $\eta > 2 - 1/s$. For the regular part of the kernel, it is enough to use the estimate
	\begin{equation*}
		\abs{\varphi(y - x) - \varphi(y + x)} \lesssim \norm{\varphi}_{\holderspace{0, \alpha}}^\eta \min(x, \abs{y})^{\alpha \eta} \norm{\varphi}_{L^\infty}^{1-\eta} \lesssim \norm{\varphi}_{\holderspace{0, \alpha}}^\eta x^{\alpha \eta}
	\end{equation*}
	instead of \eqref{eq:interpolation}. Indeed, inserting this in \eqref{eq:interpolation_startingpoint} for the regular part gives
	\begin{equation} \label{eq:regular_global_estimate}
		\begin{aligned}
			\int_{-\infty}^0 \abs{\besselkernelreg{s}(y + h) - \besselkernelreg{s}(y - h)} \abs{\varphi(y - x) - \varphi(y + x)} \slot \d{y} \lesssim \norm{\varphi}_{\holderspace{0, \alpha}}^\eta h x^{\alpha \eta} \int_{-\infty}^{0} \remainder{h}{1}(y) \slot \d{y},
		\end{aligned}
	\end{equation}
	where the last integral converges due to \eqref{eq:k_reg_first_derivative}. Observe that
	\begin{equation*}
		h x^{\alpha \eta} = x^{\alpha \eta + s(1-\eta)} \frac{h^{s(1-\eta)}}{x^{s(1-\eta)}} h^{1- s(1-\eta)} < x^{\alpha \eta + s(1-\eta)} h^s
	\end{equation*}
	for $0 < h < x < 1$, since we have made the choice of $\eta > 2 - 1/s$. Thus, combining \eqref{eq:singular_global_estimate} and \eqref{eq:regular_global_estimate} with \eqref{eq:interpolation_startingpoint} yields
	\begin{equation*}
			(\wavespeed - \varphi(x)) \abs{\varphi(x + h) - \varphi(x - h)} \lesssim \norm{\varphi}_{\holderspace{0, \alpha}}^\eta x^{\alpha \eta + s(1-\eta)} h^{s},
	\end{equation*}
	with no hidden dependence of $\alpha$ in the inequality. This means that
	\begin{equation*}
		\biggparanth{\frac{\wavespeed-\varphi(x)}{x^{\alpha \eta + s(1-\eta)}}} \biggparanth{\frac{\abs{\varphi(x+h) - \varphi(x-h)}}{h^s}} \lesssim \norm{\varphi}_{\holderspace{0, \alpha}}^{\eta},
	\end{equation*}
	uniformly for $\alpha \in (0, s)$. Since $\wavespeed - \varphi(x) \gtrsim |x|^s$ for $x \ll 1$ by Proposition~\ref{prop:upper_regularity_bound} and $h < x$, this can be reduced to
	\begin{equation*}
		\frac{\abs{\varphi(x + h) - \varphi(x - h)}}{h^{s - \eta(s - \alpha)}} \lesssim \norm{\varphi}_{\holderspace{0, \alpha}}^{\eta}.
	\end{equation*}
	Splitting the estimate over $\eta$ we arrive at
	\begin{equation*}
		\biggparanth{\frac{\abs{\varphi(x + h) - \varphi(x - h)}}{h^{\alpha}}}^{\eta} \biggparanth{\frac{\abs{\varphi(x + h) - \varphi(x - h)}}{h^{s}}}^{1 - \eta} \lesssim \norm{\varphi}_{\holderspace{0, \alpha}}^{\eta},
	\end{equation*}
	which finally proves
	\begin{equation*}
		\sup_{0<h<x<\delta} \left( \frac{\abs{\varphi(x + h) - \varphi(x - h)}}{h^{\alpha}} \right)^{1 - \eta} \lesssim 1
	\end{equation*}
	uniformly for $\alpha \in (0, s)$ with $\max(0, 2-1/s) < \eta < 1$ fixed. This justifies letting $\alpha \nearrow s$, thereby proving global $s$-Hölder regularity of the solution $\varphi$.
\end{proof}

\subsection{Bifurcation to a highest wave} \label{subsec:bifurcation_fkdv}

For ${\beta \in (s, 1)}$ and $P < \infty$, define the function
\begin{equation*} \label{eq:bifurcation_func_fKdV_def}
	F \colon (\varphi, \wavespeed) \mapsto \wavespeed \varphi - \frac{1}{2} \varphi^2 - \bessel{-s} \varphi
\end{equation*}
mapping $\holderspaceeven{0, \beta}(\ptorus) \times \R$ to $\holderspaceeven{0, \beta}(\ptorus)$. At any point $(\varphi, \wavespeed)$ the Fréchet derivative is given by
\begin{equation} \label{eq:frechet_der_in_u}
	\partial_{\varphi} F[\varphi, \wavespeed] = (\wavespeed - \varphi) \id - \bessel{-s},
\end{equation}
where $\id$ denotes the identity operator. Solutions to the equation
\begin{equation} \label{eq:bifurcation_eq}
	F(\varphi, \wavespeed) = 0
\end{equation}
coincide with solutions to the steady fKdV equation, now with the additional requirement of evenness, $P$-periodicity and $\beta$-Hölder continuity of $\varphi$. There are exactly two curves of constant solutions, namely
\begin{equation*}
	\varphi \equiv 0 \qquad \textnormal{and} \qquad \varphi \equiv 2(\wavespeed - 1).
\end{equation*}

The following proposition is an application of an analytic Crandall--Rabinowitz theorem, giving existence of local bifurcation branches around the trivial solution curve $(0, \wavespeed)$ of \eqref{eq:bifurcation_eq}.

\begin{proposition} \label{prop:local_bifurcation}
    For any period $P < \infty$ and $k \in \N$ there exists $\wavespeed_{P, k}^* = \jap{\frac{2 \pi k}{P}}{-s}$ and a local analytic curve
	\begin{equation*}
		\localbifurcationcurve_{P, k} = \set{(\varphi_{P, k}(t), \wavespeed_{P, k}(t)) \setsep t \in (-\varepsilon, \varepsilon) \textnormal{ and } (\varphi_{P, k}(0), \wavespeed_{P, k}(0)) = (0, \wavespeed_{P, k}^*)}
	\end{equation*}
	in $\holderspaceeven{0, \beta}(\ptorus) \times \R$ that bifurcates from the trivial solution curve of \eqref{eq:bifurcation_eq} in $(0, \wavespeed_{P, k}^*)$, such that $F(\varphi_{P, k}(t), \wavespeed_{P, k}(t)) = 0$ for all $t \in (-\varepsilon, \varepsilon)$.
	
	Together with the transcritical bifurcation of constant solutions ${2(\wavespeed - 1)}$, the curves ${\localbifurcationcurve_{P, k}}$ constitute all nonzero solutions to \eqref{eq:bifurcation_eq} in $\holderspaceeven{0, \beta}(\ptorus) \times \R$ in a neighborhood of the trivial solution curve.
\end{proposition}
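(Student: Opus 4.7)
The plan is to apply an analytic version of the Crandall--Rabinowitz bifurcation theorem to the map $F$ at each point $(0, \bifurcationpoint)$ on the trivial solution curve. Real-analyticity of $F$ as a map $\holderspaceeven{0, \beta}(\ptorus) \times \R \to \holderspaceeven{0, \beta}(\ptorus)$ is immediate, since $F$ is quadratic in $\varphi$, affine in $\wavespeed$, and $\bessel{-s}$ is a bounded linear operator on $\holderspaceeven{0, \beta}(\ptorus)$ by \eqref{eq:smoothing_operator}.

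To identify the bifurcation points I would analyze the Fréchet derivative \eqref{eq:frechet_der_in_u} at $\varphi = 0$, namely $\partial_\varphi F[0, \wavespeed] = \wavespeed \id - \bessel{-s}$, via its Fourier series. Writing an arbitrary even element as $\varphi = \sum_{k \geq 0} \fouriercoeff{\varphi}{k} \cos(\tfrac{2\pi k}{P} x)$, this operator acts as multiplication by $\wavespeed - \jap{\tfrac{2\pi k}{P}}{-s}$ on the $k$-th mode, so its kernel is nontrivial exactly when $\wavespeed = \bifurcationpoint = \jap{\tfrac{2\pi k}{P}}{-s}$. Because $\jap{\cdot}{-s}$ is strictly decreasing on $[0, \infty)$, the values $\bifurcationpoint$ for $k \in \N$ are distinct, and the kernel at $\bifurcationpoint$ is one-dimensional, spanned by $\varphi_k^* := \cos(\tfrac{2\pi k}{P} x)$. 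The smoothing property \eqref{eq:smoothing_operator}, combined with the compact embedding $\holderspaceeven{0, \beta + s}(\ptorus) \hookrightarrow \holderspaceeven{0, \beta}(\ptorus)$, makes $\bessel{-s}$ compact on $\holderspaceeven{0, \beta}(\ptorus)$; hence $\partial_\varphi F[0, \bifurcationpoint]$ is Fredholm of index zero, and pairing with the kernel identifies its range with those $f$ whose $k$-th Fourier coefficient vanishes. The transversality condition follows at once: $\partial_\wavespeed \partial_\varphi F[0, \bifurcationpoint] \varphi_k^* = \varphi_k^*$ has $k$-th Fourier coefficient $1 \neq 0$ and therefore does not lie in the range.

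With these ingredients, the analytic Crandall--Rabinowitz theorem (e.g.\ as formulated by Buffoni--Toland) produces the local analytic bifurcation curve $\localbifurcationcurve_{P, k}$ through $(0, \bifurcationpoint)$ for each $k \in \N$. The transcritical branch of constant solutions $\varphi \equiv 2(\wavespeed - 1)$ corresponds formally to the $k = 0$ case, since $\jap{0}{-s} = 1$: the same machinery applied at $(0, 1)$, with kernel spanned by the constant function $1$, recovers this line as the unique local solution curve transverse to the trivial one. At any remaining point $(0, \wavespeed_0)$ with $\wavespeed_0 \notin \{\bifurcationpoint : k \in \N_0\}$, the Fourier multiplier of $\partial_\varphi F[0, \wavespeed_0]$ is bounded away from zero, the operator is invertible, and the analytic implicit function theorem gives local uniqueness of the trivial curve. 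I expect the main technical point to be the verification that the smoothing estimate \eqref{eq:smoothing_operator} really furnishes compactness of $\bessel{-s}$ on the even Hölder space $\holderspaceeven{0, \beta}(\ptorus)$, which in turn underpins both the Fredholm structure at each $\bifurcationpoint$ and the simultaneous local uniqueness statement away from the discrete, but clustering, set of bifurcation values.
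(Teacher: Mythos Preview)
Your proposal is correct and follows essentially the same route as the paper: apply the analytic Crandall--Rabinowitz theorem from Buffoni--Toland, use the Fourier diagonalization of $\partial_\varphi F[0,\wavespeed]=\wavespeed\id-\bessel{-s}$ to locate one-dimensional kernels at $\wavespeed=\bifurcationpoint$, obtain the Fredholm-index-zero property from compactness of $\bessel{-s}$ via smoothing plus the compact embedding, verify transversality by $\partial^2_{\wavespeed\varphi}F[0,\bifurcationpoint]\varphi_k^*=\varphi_k^*\notin\image\partial_\varphi F[0,\bifurcationpoint]$, and conclude local uniqueness elsewhere by the implicit function theorem. One minor point: when $\beta+s>1$ the target of the smoothing map should be taken as the Zygmund space $\zygmundspace{\beta+s}(\ptorus)$ rather than $\holderspace{0,\beta+s}$, but this does not affect the argument.
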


\begin{proof}
	We check the assumptions of \cite[Theorem 8.3.1]{buffoni_toland}. The Fréchet derivative of $F$ on the trivial curve is
    \begin{equation*}
        \partial_{\varphi}F[0, \wavespeed] = \wavespeed \id - \bessel{-s}.
	\end{equation*}
	The operator $\bessel{-s}$ is a compact automorphism on $\holderspaceeven{0, \beta}(\ptorus)$ owing to the compact embedding
	\begin{equation} \label{eq:compact_zygmund_embedding}
		\zygmundspace{\beta + s}(\ptorus) \doublehookrightarrow \zygmundspace{\beta}(\ptorus)
	\end{equation}
	for $s > 0$ and any finite $P > 0$ (see e.g \cite[A.39]{taylor_3}). As a result of the Fredholm alternative, this implies that $\partial_{\varphi}F[0, \wavespeed]$ is a Fredholm operator of index zero. Furthermore, $\partial_{\varphi}F[0, \wavespeed_{P, k}^*]$ maps the basis function $\varphi_{P, k}^* = \cos\big( \frac{2 \pi k}{P} x \big)$ of $\holderspaceeven{0, \beta}(\ptorus)$ to zero while all others are multiplied by a positive constant. Hence, the dimension of the kernel and the codimension of the image of $\partial_{\varphi}F[0, \wavespeed_{P, k}^*]$ is $1$. Next, we have
	\begin{equation*}
		\ker(\partial_{\varphi}F[0, \wavespeed_{P, k}^*]) = \set{\tau \varphi_{P, k}^*\setsep \tau \in \R }\qquad \textnormal{and} \qquad \partial_{\varphi \wavespeed}^{2} F[0, \wavespeed_{P, k}^*] (1, \varphi_{P, k}^*) = \varphi_{P, k}^*.
	\end{equation*}
	This means that the transversality condition holds, that is
	\begin{equation*}
		\partial_{\wavespeed \varphi}^{2} F[0, \wavespeed_{P, k}^*] (1, \varphi_{P, k}^*) \not \in \image(\partial_{\varphi}F[0, \wavespeed_{P, k}^*]).
	\end{equation*}
	This shows that the assumptions of \cite[Theorem 8.3.1]{buffoni_toland} are satisfied, and we conclude that local bifurcation occurs and that the curves $\localbifurcationcurve_{P, k}$ are analytic since $F$ is analytic.
	
	Since the kernel of $\partial_{\varphi}F[0, \wavespeed]$ is trivial for all $\wavespeed \neq \wavespeed_{P, k}^*$ with $\wavespeed \neq 1$, it follows from the implicit function theorem that the trivial solution curve is otherwise locally unique.
\end{proof}

Hereafter we consider only the first bifurcation point $(0, \wavespeed_{P, 1}^*)$ and the corresponding one-dimensional basis $\varphi_{P, 1}^* = \cos(\frac{2 \pi}{P} x)$ for $\ker \partial_{\varphi} F[0, \wavespeed_{P, 1}^*]$. To simplify notation, let $(\varphi(t), \wavespeed(t))$ denote the curve $\localbifurcationcurve_{P, 1}$ from Proposition~\ref{prop:local_bifurcation}, emanating from the point $(0, \wavespeed_{P, 1}^*)$.

In the analytic setting, we may expand $(\varphi(t)), \wavespeed(t)$ around $t = 0$ as
\begin{equation} \label{eq:local_branch_parametrization}
	\varphi(t) = \sum_{n = 1}^{\infty} \varphicoeff{n} t^n, \qquad \wavespeed(t) = \sum_{n = 0}^{\infty} \wavespeedcoeff{2n} t^{2n},
\end{equation}
corresponding to the Lyapunov-Schmidt reduction \cite{buffoni_toland}, where we have used $\wavespeed(t) = \wavespeed(-t)$ (see \cite{ehrnstrom_wahlen}). Then $\wavespeedcoeff{0} = \wavespeed_{P, 1}^* = \jap{\frac{2 \pi}{P}}{-s}$ and $\varphicoeff{1}(x) = \cos(\frac{2 \pi}{P} x)$. Furthermore, one can check that
\begin{equation*}
	\varphicoeff{2}(x) = -\frac{1}{4(1 - \jap{\frac{2 \pi}{P}}{-s})} - \frac{1}{4(\jap{\frac{4 \pi}{P}}{-s} - \jap{\frac{2 \pi}{P}}{-s})} \cos\big( \frac{4 \pi}{P} x\big).
\end{equation*}
and
\begin{equation*} \label{eq:second_derivative_mu}
	\wavespeedcoeff{2} = \frac{1}{4(\jap{\frac{2 \pi}{P}}{-s} - 1)} + \frac{1}{8(\jap{\frac{2 \pi}{P}}{-s} - \jap{\frac{4 \pi}{P}}{-s})}.
\end{equation*}

In the direction of global bifurcation, we define the sets
\begin{equation*}
	\submaxset = \bigset{(\varphi, \wavespeed) \in \holderspaceeven{0, \beta}(\ptorus) \times \R \setsep \varphi < \wavespeed}\qquad \textnormal{and} \qquad \solutionset = \bigset{ (\varphi, \wavespeed) \in U \setsep F(\varphi, \wavespeed) = 0},
\end{equation*}
and let $\solutionset^1$ denote the $\varphi$-component of $\solutionset$.

\begin{proposition} \label{prop:global_bifurcation}
	The local bifurcation branch $t \mapsto (\varphi(t), \wavespeed(t))$ extends to a global continuous curve $\globalbifurcationcurve = \set{(\varphi(t), \wavespeed(t)) \setsep t \in [0, \infty)} \subset \submaxset$, and one of the following alternatives holds.
	\begin{itemize}
		\item[(i)] $\norm{(\varphi(t), \wavespeed(t))}_{\holderspace{0, \beta} \times \R} \rightarrow \infty$ as $t \rightarrow \infty$,
		\item[(ii)] $\dist(\globalbifurcationcurve, \boundary U) = 0$,
		\item[(iii)] $\globalbifurcationcurve$ is a closed loop of finite period. That is, there exists $T > 0$ such that
		\begin{equation*}
			\globalbifurcationcurve = \set{(\varphi(t), \wavespeed(t)) \setsep 0 \leq t \leq T},
		\end{equation*}
		where $(\varphi(T), \wavespeed(T)) = (0, \wavespeed_{P, 1}^*)$.
	\end{itemize}
\end{proposition}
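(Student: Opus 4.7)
The plan is to apply the analytic global bifurcation theorem \cite[Theorem 9.1.1]{buffoni_toland}, extending the local branch $\localbifurcationcurve_{P, 1}$ from Proposition~\ref{prop:local_bifurcation} to a maximal global continuous curve inside $\submaxset$. Four ingredients need to be checked: (a) $F$ is real-analytic on $\submaxset$; (b) $\partial_\varphi F[\varphi, \wavespeed]$ is Fredholm of index zero at every point of $\solutionset$; (c) a properness condition, ensuring that bounded subsets of $\solutionset$ which remain at positive distance from $\boundary \submaxset$ are relatively compact; and (d) the local bifurcation structure at $(0, \wavespeed_{P, 1}^*)$, which is already established.

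Condition (a) is immediate, since $F$ is the sum of a quadratic polynomial in $(\varphi, \wavespeed)$ and the bounded linear operator $-\bessel{-s}$. For (b), observe that at any $(\varphi, \wavespeed) \in \submaxset$ the function $\wavespeed - \varphi$ is continuous on the compact torus $\ptorus$ and strictly positive, hence bounded below by some $\delta > 0$. Pointwise multiplication by $\wavespeed - \varphi$ is therefore an invertible bounded operator on $\holderspaceeven{0, \beta}(\ptorus)$, and from \eqref{eq:frechet_der_in_u} I would factor
\[
    \partial_\varphi F[\varphi, \wavespeed] = (\wavespeed - \varphi)\bigparanth{\id - (\wavespeed - \varphi)^{-1} \bessel{-s}}.
\]
The operator $(\wavespeed - \varphi)^{-1} \bessel{-s}$ is compact on $\holderspaceeven{0, \beta}(\ptorus)$ by the compact Zygmund embedding \eqref{eq:compact_zygmund_embedding}, and the Fredholm alternative therefore shows that $\partial_\varphi F[\varphi, \wavespeed]$ is Fredholm of index zero.

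For (c), I would take a sequence $(\varphi_n, \wavespeed_n) \in \solutionset$ bounded in $\holderspaceeven{0, \beta}(\ptorus) \times \R$ and satisfying $\dist((\varphi_n, \wavespeed_n), \boundary \submaxset) \geq c > 0$. Passing to subsequences, $\wavespeed_n \to \wavespeed_*$ in $\R$ and, by the compact embedding $\holderspaceeven{0, \beta}(\ptorus) \doublehookrightarrow \holderspaceeven{0, \alpha}(\ptorus)$ for any $\alpha < \beta$, $\varphi_n \to \varphi_*$ in $\holderspaceeven{0, \alpha}(\ptorus)$. The distance condition forces $\wavespeed_* - \varphi_* \geq c$, placing the limit in $\submaxset$. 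Rewriting the equation as \eqref{eq:bootstrap_form}, the argument of the square root is uniformly bounded below along the sequence, so the square root is smooth uniformly in $n$; combining this with the smoothing property $\bessel{-s} \colon \zygmundspace{\alpha} \to \zygmundspace{\alpha + s}$ and bootstrapping as in Lemma~\ref{lemma:regularity_1} upgrades the convergence to $\holderspaceeven{0, \beta}(\ptorus)$. Passing to the limit in $F(\varphi_n, \wavespeed_n) = 0$ shows $(\varphi_*, \wavespeed_*) \in \solutionset$.

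With (a)--(d) verified, \cite[Theorem 9.1.1]{buffoni_toland} produces the continuous extension $\globalbifurcationcurve = \set{(\varphi(t), \wavespeed(t)) \setsep t \in [0, \infty)} \subset \submaxset$ of the local curve, satisfying exactly one of the three alternatives (i)--(iii). The main obstacle is condition (c): while the functional-analytic setup is clean, the bootstrap upgrading $\holderspaceeven{0, \alpha}$-convergence to $\holderspaceeven{0, \beta}$-convergence must be uniform in the sequence, which is precisely why the compactness is phrased away from $\boundary \submaxset$---this distance-based formulation is what separates alternative (ii), the approach to the boundary, from the blow-up alternative (i) and the closed-loop alternative (iii).
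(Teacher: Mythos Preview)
Your proposal is correct and follows essentially the same route as the paper: verify the hypotheses of \cite[Theorem 9.1.1]{buffoni_toland} via the Fredholm property (invertibility of multiplication by $\wavespeed-\varphi$ on $\submaxset$ plus compactness of $\bessel{-s}$) and a compactness argument for bounded pieces of $\solutionset$ based on the bootstrap form \eqref{eq:bootstrap_form} together with the compact embedding \eqref{eq:compact_zygmund_embedding}. One small slip: the three alternatives are not mutually exclusive---the paper in fact later shows that (i) and (ii) occur simultaneously---so your ``exactly one'' should read ``at least one''.
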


\begin{proof}
	We check the assumptions of \cite[Theorem 9.1.1]{buffoni_toland} (see also \cite[Theorem 6]{constantin_strauss} for comments on the condition $\dot{\wavespeed} \not \equiv 0$ around $t = 0$ which we do not use here). Firstly, note that the operator $\partial_{\varphi}F[\varphi, \wavespeed]$ given in \eqref{eq:frechet_der_in_u} is Fredholm of index zero for every $(\varphi, \wavespeed) \in \submaxset$. Indeed, $(\wavespeed - \varphi) \id$ is a linear homeomorphism on $\holderspaceeven{0, \beta}(\ptorus)$ for $\varphi < \wavespeed$, and $\bessel{-s}$ is compact, so the claim follows from \cite[Theorem 2.7.6]{buffoni_toland}.
	
	Secondly, every closed and bounded subset of $\solutionset$ is compact: if $K$ is a closed and bounded subset of $\solutionset$, then $K^1 = \set{\varphi \setsep (\varphi, \wavespeed) \in K}$ is a bounded subset of $\zygmundspaceeven{\beta + s}(\ptorus)$ due to \eqref{eq:bootstrap_form}. In view of the compact embedding \eqref{eq:compact_zygmund_embedding} we see that $K^1$ is relatively compact in $\holderspaceeven{0, \beta}(\ptorus)$. But $K$ is closed by assumption, so it is compact. Since we have already proved the existence of local bifurcation in Proposition~\ref{prop:local_bifurcation}, we are done.
\end{proof}

Towards invoking \cite[Theorem 9.2.2]{buffoni_toland} and the exclusion of alternative (iii), let the closed cone $\cone$ be defined as
\begin{equation} \label{eq:definition_of_cone}
	\cone = \set{ \varphi \in \holderspaceeven{0, \beta}(\ptorus) \setsep \varphi \textnormal{ is nondecreasing on } (\nicefrac{-P}{2}, 0)}.
\end{equation}
Then we have the following.

\begin{proposition} \label{prop:exclusion_of_alt_3}
    The first component $\varphi(t)$ of the global bifurcation curve $\globalbifurcationcurve$ belongs to $\cone \setminus \{0\}$ for all $t > 0$, and alternative (iii) in Proposition~\ref{prop:global_bifurcation} does not occur.
\end{proposition}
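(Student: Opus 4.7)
The plan is to apply the nodal-preservation theorem \cite[Theorem 9.2.2]{buffoni_toland} with the cone $\cone$ from \eqref{eq:definition_of_cone} to propagate the monotonicity of $\varphi(t)$ along the global curve, and then to rule out alternative~(iii) by local analysis at $(0, \wavespeed_{P,1}^*)$. The structural ingredients are already in place: Lemma~\ref{lemma:regularity_1} gives smoothness of subcritical solutions, Lemma~\ref{lemma:nodal_properties} gives strict monotonicity of smooth nonconstant cone solutions in $\submaxset$, and the local expansion \eqref{eq:local_branch_parametrization} starts the branch inside the interior of $\cone$.

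The propagation step would proceed as follows. From $\varphi(t)(x) = t\cos(\tfrac{2\pi}{P} x) + O(t^2)$ one has $\varphi(t) \in \cone \setminus \{0\}$ and $\varphi(t) < \wavespeed(t)$ for all sufficiently small $t > 0$. For any nonzero $(\varphi, \wavespeed) \in \solutionset$ with $\varphi \in \cone$, the condition $\varphi < \wavespeed$ on $\ptorus$ combined with Lemma~\ref{lemma:regularity_1} and Lemma~\ref{lemma:nodal_properties} gives $\varphi' > 0$ on $(-P/2, 0)$, so the boundary of $\cone$ meets the nontrivial part of $\solutionset$ only at constant solutions. By Proposition~\ref{prop:a_priori_inf_sup} such constants are $\varphi \equiv 0$ or $\varphi \equiv 2(\wavespeed - 1)$, and the latter is ruled out on $\globalbifurcationcurve$ by the local uniqueness in Proposition~\ref{prop:local_bifurcation}. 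Setting $t^{*} = \inf\set{t > 0 : \varphi(t) \notin \cone \setminus \{0\}}$ and supposing $t^{*} < \infty$, continuity forces $\varphi(t^{*}) \equiv 0$. The implicit function theorem then forces $\wavespeed(t^{*}) = \wavespeed_{P,k}^{*}$ for some $k \geq 1$; since solutions on the local $k$-th branch behave like $s\cos(\tfrac{2\pi k}{P} x)$ for small $s > 0$ and hence have $k$ extrema per period, such a branch is not contained in $\cone$ for $k \geq 2$, and the cone condition along $\globalbifurcationcurve$ just below $t^{*}$ enforces $k = 1$. Any failure of the cone condition therefore reduces to alternative~(iii).

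To exclude alternative~(iii), suppose toward contradiction that $(\varphi(T), \wavespeed(T)) = (0, \wavespeed_{P,1}^{*})$ at some $T > 0$. By Proposition~\ref{prop:local_bifurcation}, in a neighborhood of $(0, \wavespeed_{P,1}^{*})$ the only nontrivial solutions of $F = 0$ form the analytic branch $s \mapsto (\varphi(s), \wavespeed(s))$ with $\varphi(s) = s\cos(\tfrac{2\pi}{P} x) + O(s^2)$; only the half $s > 0$ lies in $\cone$, since $-\cos(\tfrac{2\pi}{P}\cdot)$ is strictly decreasing on $(-P/2, 0)$. Because $\globalbifurcationcurve$ lies in $\cone$ for $t \in (0, T)$ by the preceding step, its approach to $(0, \wavespeed_{P,1}^{*})$ as $t \to T^{-}$ must occur with $s > 0$. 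Local analytic uniqueness then forces the restriction of $\globalbifurcationcurve$ near $t = T^{-}$ to coincide, up to analytic reparametrization, with the restriction near $t = 0^{+}$; analytic continuation yields $T = 0$, contradicting $T > 0$.

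The hardest part will be the propagation step: although \cite[Theorem 9.2.2]{buffoni_toland} supplies the clean abstract framework, its application here hinges on excluding degenerate crossings of $\globalbifurcationcurve$ with the trivial solution curve at higher bifurcation values $\wavespeed_{P,k}^{*}$ with $k \geq 2$. The cone condition plays the decisive role by forbidding the $k$-fold oscillatory local branches, thereby reducing the only possible breakdown of monotonicity to the closed-loop alternative~(iii) excluded above.
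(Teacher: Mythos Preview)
Your propagation step has a genuine gap. You claim that ``the boundary of $\cone$ meets the nontrivial part of $\solutionset$ only at constant solutions'' on the basis of $\varphi'>0$ on $(\nicefrac{-P}{2},0)$ alone. But $\cone$ has empty interior in $\holderspaceeven{0,\beta}(\ptorus)$: what is required for \cite[Theorem 9.2.2]{buffoni_toland} is that every nonconstant $\varphi\in\cone\cap\solutionset^1$ lies in the interior of $\cone$ \emph{relative to} $\solutionset^1$, i.e.\ that every nearby solution $\psi\in\solutionset^1$ is again in $\cone$. Strict positivity of $\varphi'$ on the open interval does not secure this, because $\varphi'(0)=\varphi'(\pm P/2)=0$; a $C^{0,\beta}$-small perturbation of $\varphi$ inside $\solutionset^1$ could in principle have $\psi'<0$ near an endpoint. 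This is precisely why Lemma~\ref{lemma:nodal_properties} also records $\varphi''(0)<0$ and $\varphi''(\pm P/2)>0$: the paper uses these second-derivative signs, together with the bootstrap $\holderspace{0,\beta}\to C^2$ available on $\solutionset^1$ via Lemma~\ref{lemma:regularity_1}, to force $\psi''(0)<0$, $\psi''(\pm P/2)>0$ for nearby solutions and hence $\psi'\geq 0$ right up to the endpoints. You invoke Lemma~\ref{lemma:nodal_properties} but extract only the first-derivative information; the endpoint analysis is the heart of the matter and is missing from your outline.

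Your exclusion of alternative (iii) is also shakier than the paper's. The sentence ``analytic continuation yields $T=0$'' does not hold: alternative (iii) in the Buffoni--Toland theorem \emph{is} exactly the scenario where the curve closes up, so observing that the incoming and outgoing germs at $(0,\wavespeed_{P,1}^*)$ coincide does not by itself contradict $T>0$. The paper instead combines the relative-interior property above with Lemma~\ref{lemma:a_priori_l2_norm} (and defers the handling of the transcritical curve $2(\wavespeed-1)$ to \cite[Theorem~6.7]{ehrnstrom_wahlen}); the $L^2$ identity provides the structural obstruction to a closed loop that your local-uniqueness argument lacks.
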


\begin{proof}
	We claim that in $\solutionset^1$, every nonconstant function $\varphi$ in $\globalbifurcationcurve^1 \cap \cone$ lies in the interior of $\cone$. Such a solution $\varphi$ must be smooth with $\varphi' > 0$ on $(\nicefrac{-P}{2}, 0)$ and furthermore $\varphi''(0) < 0$ and $\varphi''(-P/2) > 0$, by Lemma~\ref{lemma:regularity_1} and Lemma~\ref{lemma:nodal_properties}. Now if $\psi \in \solutionset^1$ with $\norm{\varphi - \psi}_{C^{0, \beta}} < \delta$ for some $\delta > 0$, then $\norm{\varphi - \psi}_{C^2} < \tilde{\delta}$ where $\tilde{\delta}$ can be made arbitrarily small at the expense of $\delta$. This means that $\psi' > 0$ on some closed subset $[a, b]$ of $(\nicefrac{-P}{2}, 0)$. Suppose that $\psi' \leq 0$ on $(b, 0)$. Then $\psi'(0) < \psi'(x) \leq 0$ for $x \in (b, 0)$, since $\psi''(0) < 0$. But this contradicts the evenness of $\psi$. With an analogous argument on $(\nicefrac{-P}{2}, a)$, we arrive at $\psi' \geq 0$ on $(\nicefrac{-P}{2}, 0)$. Thus, $\psi \in \cone$, and $\varphi$ belongs to the interior of $\cone$. Together with Lemma~\ref{lemma:a_priori_l2_norm} this suffices to exclude the alternative (iii) (see also \cite[Theorem 6.7]{ehrnstrom_wahlen} for a more detailed explanation why the transcritical bifurcation ${2(\wavespeed - 1)}$ does not cause problems here).
\end{proof}

\begin{lemma} \label{lemma:conv_subsequence_global_bifurcation}
	Any sequence of solutions $(\varphi_n, \wavespeed_n)_{n \in \N} \subset \solutionset$ to the steady fKdV equation with bounded $(\wavespeed_n)_{n \in \N}$ converges uniformly along a subsequence to a solution $(\varphi, \wavespeed)$.
\end{lemma}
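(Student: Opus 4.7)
The plan is to prove compactness through an \emph{a priori} uniform Hölder bound, and then pass to the limit in the equation using that $\bessel{-s}$ is continuous on $L^{\infty}(\R)$. Since $(\wavespeed_n)_{n \in \N}$ is bounded in $\R$, extract a subsequence (not relabeled) such that $\wavespeed_n \to \wavespeed$ for some $\wavespeed \in \R$. By Proposition~\ref{prop:a_priori_inf_sup} the $L^{\infty}$-norms of $\varphi_n$ are controlled by $|\wavespeed_n|$, so $(\varphi_n)_{n \in \N}$ is uniformly bounded in $L^{\infty}(\R)$. The smoothing property \eqref{eq:smoothing_operator} then gives that $(\bessel{-s}\varphi_n)_{n \in \N}$ is uniformly bounded in $\zygmundspace{s}(\R) = C^{0, s}(\R)$.

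Next I would exploit that each $(\varphi_n, \wavespeed_n) \in \submaxset$ satisfies $\varphi_n < \wavespeed_n$ pointwise, so completing the square in the steady fKdV equation yields
\begin{equation*}
    \varphi_n = \wavespeed_n - \sqrt{\wavespeed_n^2 - 2 \bessel{-s}\varphi_n},
\end{equation*}
with a nonnegative radicand $\wavespeed_n^2 - 2 \bessel{-s}\varphi_n = (\wavespeed_n - \varphi_n)^2$. The radicand is uniformly bounded in $C^{0, s}(\R)$ by the previous paragraph, and the elementary inequality $|\sqrt{a} - \sqrt{b}|^2 \leq |a - b|$ for $a, b \geq 0$ implies that the square root of a nonnegative $C^{0, s}$ function is $C^{0, s/2}$, with its Hölder seminorm bounded by the square root of the $C^{0, s}$-seminorm of the original. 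Thus $(\varphi_n)_{n \in \N}$ is uniformly bounded in $C^{0, s/2}(\ptorus)$.

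The Arzelà–Ascoli theorem on the compact torus $\ptorus$ now yields a further subsequence converging uniformly to some continuous, even, $P$-periodic function $\varphi$. Continuity of $\bessel{-s} = \besselkernel{s} \conv \slot$ on $L^{\infty}(\R)$ (Young's inequality together with $\besselkernel{s} \in L^{1}(\R)$ from Lemma~\ref{lemma:asympt_kernel_behavior}) gives $\bessel{-s}\varphi_n \to \bessel{-s}\varphi$ uniformly, so we can pass to the limit in
\begin{equation*}
    -\wavespeed_n \varphi_n + \tfrac{1}{2} \varphi_n^2 + \bessel{-s}\varphi_n = 0
\end{equation*}
to conclude that $(\varphi, \wavespeed)$ solves the steady fKdV equation. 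The only subtle point — and the closest thing to an obstacle — is that the strict inequality $\varphi_n < \wavespeed_n$ need not be preserved in the limit; this is, however, irrelevant for the statement, since the lemma only asserts convergence to a solution, not to an element of $\solutionset$ itself (in fact this degeneration is exactly the highest-wave scenario that the bifurcation analysis aims to capture).
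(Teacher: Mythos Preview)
Your proof is correct and follows essentially the same route as the paper: an $L^\infty$ bound on $\varphi_n$ from the equation, the smoothing of $\bessel{-s}$, Arzel\`a--Ascoli, and passage to the limit. The only cosmetic differences are that the paper applies Arzel\`a--Ascoli directly to $(\bessel{-s}\varphi_n)_n$ rather than first transferring the H\"older bound to $\varphi_n$ via the square-root formula, and that the paper extracts the $L^\infty$ bound straight from the equation ($\|\varphi\|_{L^\infty} \leq 2(|\wavespeed|+1)$) instead of citing Proposition~\ref{prop:a_priori_inf_sup}, which by itself does not bound $\max\varphi_n$ from above --- that bound comes from $\varphi_n < \wavespeed_n$ in $\solutionset$, which you invoke in the next paragraph anyway.
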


\begin{proof}
	It follows directly from the equation that
	\begin{equation*}
		\norm{\varphi}_{\lebesguespace{\infty}(\R)}^2 \leq 2 \norm{\wavespeed \varphi}_{\lebesguespace{\infty}(\R)} + 2 \norm{\bessel{-s}\varphi}_{\lebesguespace{\infty}(\R)} \leq 2(\abs{\wavespeed} + 1) \norm{\varphi}_{\lebesguespace{\infty}(\R)}
	\end{equation*}
	so $(\varphi_n)_n$ is bounded provided $(\wavespeed_n)_n$ is bounded. Furthermore, the sequence $(\bessel{-s} \varphi_n)_n$ is uniformly equicontinuous since $\besselkernel{s}$ is integrable and continuous outside of zero. Then due to Arzela--Ascoli, the sequence $(\bessel{-s}\varphi_n)_n$ has a uniformly convergent subsequence. Uniform convergence ensures that the limit is also a solution of the equation.
\end{proof}

\newcommand{\bifurcationright}[3]{%
\draw[blue, thick, densely dashed] plot[smooth, tension=1]
coordinates{(#1, #2) (#1 + 0.01, #2 + 0.125 * #3) (#1 + 0.11, #2 + 0.6 * #3) (#1, #2 + #3)};
}
\newcommand{\bifurcationleft}[3]{%
\draw[blue, thick] plot[smooth, tension=1]
coordinates{(#1, #2) (#1-0.01, #2 + 0.125 * #3) (#1 - 0.11, #2 + 0.6 * #3) (#1, #2 + #3)};
}
\newcommand{\symmetrytransformation}[4]{%
\draw[<->, dashed, thick] (#1 - 0.02, #2 - 0.22) to[out=-90, in=180] (3, 1.1) to[out=0, in=-90] (#3 + 0.02, #4 - 0.22);
}

\begin{figure}[t]
\centering
    \begin{subfigure}[b]{0.6\textwidth}
        \centering
        \resizebox{0.9\textwidth}{!}{%
        \begin{tikzpicture}
            \draw[line width=0.5em, gray!50, cap=round]
            (0.1, 0.1) to (2.9, 2.9);
            \draw[line width=0.5em, gray!50, cap=round]
            (3.14, 3) to (6.3, 3);
            \draw[thick, blue, ->]
            (-0.3, 3) to (6.5,3);
            \draw[black]
            (6.5, 3) coordinate[label = {below:$\mu$}];
            \draw[thick, ->]
            (0,0) to (0,6) coordinate[label = {above:$\max \varphi$}] (ymax);
            \draw[thick, blue]
            (0, 0) to (6, 6);
            \draw[thick, black]
            (0, 3) to (6, 6);
            \draw[dotted, thick]
            (3, 1) coordinate[label = {below:$\mu \mapsto 2 - \mu$}] to (3, 6.2) coordinate[label = {above:$\mu = 1$}];
            \draw[dotted, thick]
            (6, 1) to (6, 6.2) coordinate[label = {above:$\mu = 2$}];
            \bifurcationleft{2.4}{3}{1.2}
            \bifurcationleft{1.7}{3}{0.85}
            \bifurcationleft{1}{3}{0.5}
            \bifurcationright{3.6}{3.6}{1.2}
            \bifurcationright{4.3}{4.3}{0.85}
            \bifurcationright{5}{5}{0.5}
            \symmetrytransformation{2.4}{3}{3.6}{3.6}
            \draw
            (5.1, 5.8) coordinate[label = {[rotate=28]left:$\max \varphi = \mu$}];
            \draw
            (1.75, 2.3) coordinate[label = {[rotate=45]left:$\varphi \equiv 2(\mu - 1)$}];
        \end{tikzpicture}
        }
        \vspace{0.9cm}
        \caption{Bifurcation diagram}
        \label{fig:bifurcation_sub_diagram}
    \end{subfigure}
    \begin{subfigure}[b]{0.35\textwidth}
        \begin{subfigure}[b]{\textwidth}
            \centering
            \includegraphics[width=\textwidth]{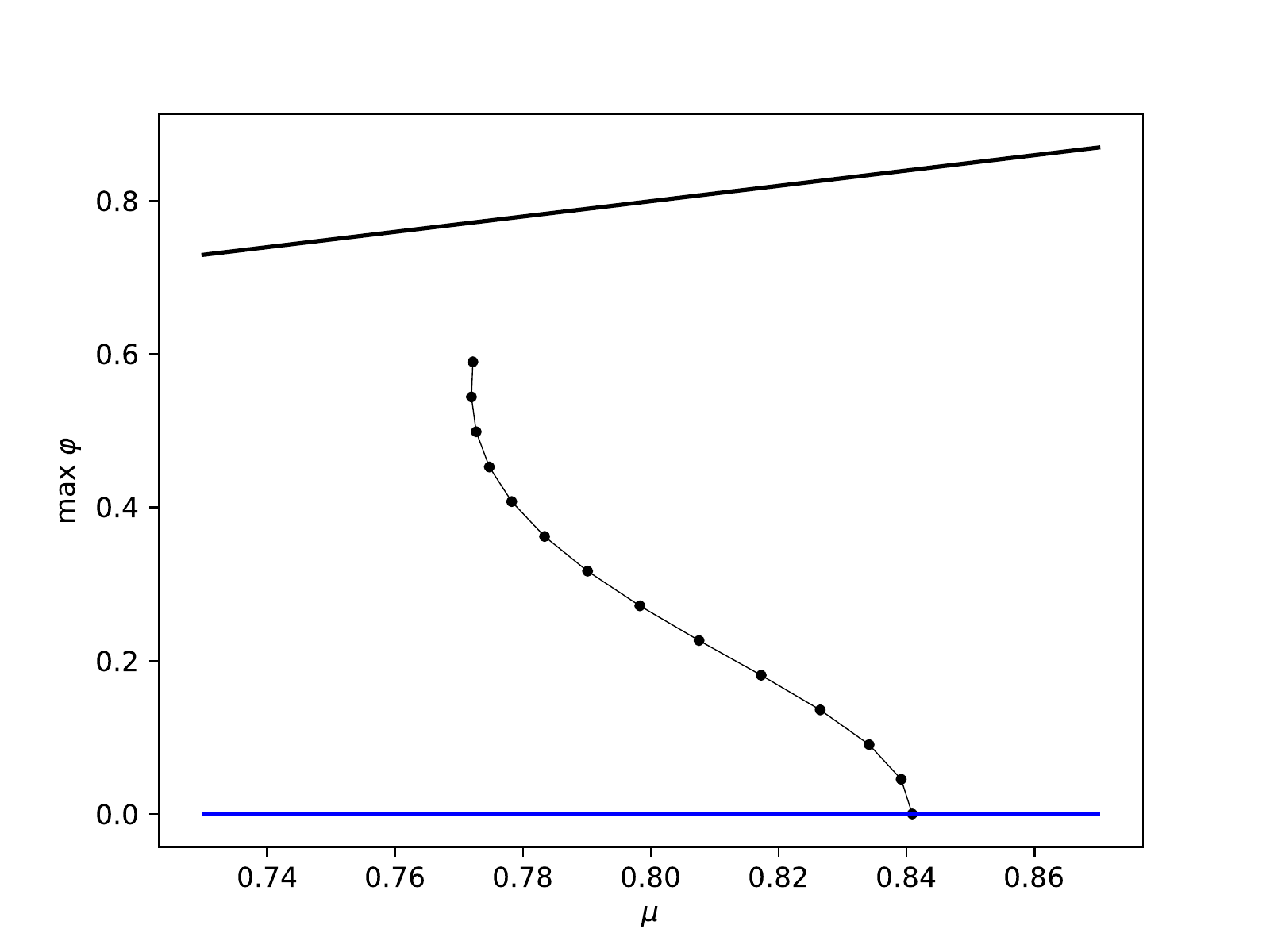}
            \caption{Numerical bifurcation branch}
            \label{fig:branch_fkdv}
        \end{subfigure}

        \begin{subfigure}[b]{\textwidth}
            \centering
            \includegraphics[width=\textwidth]{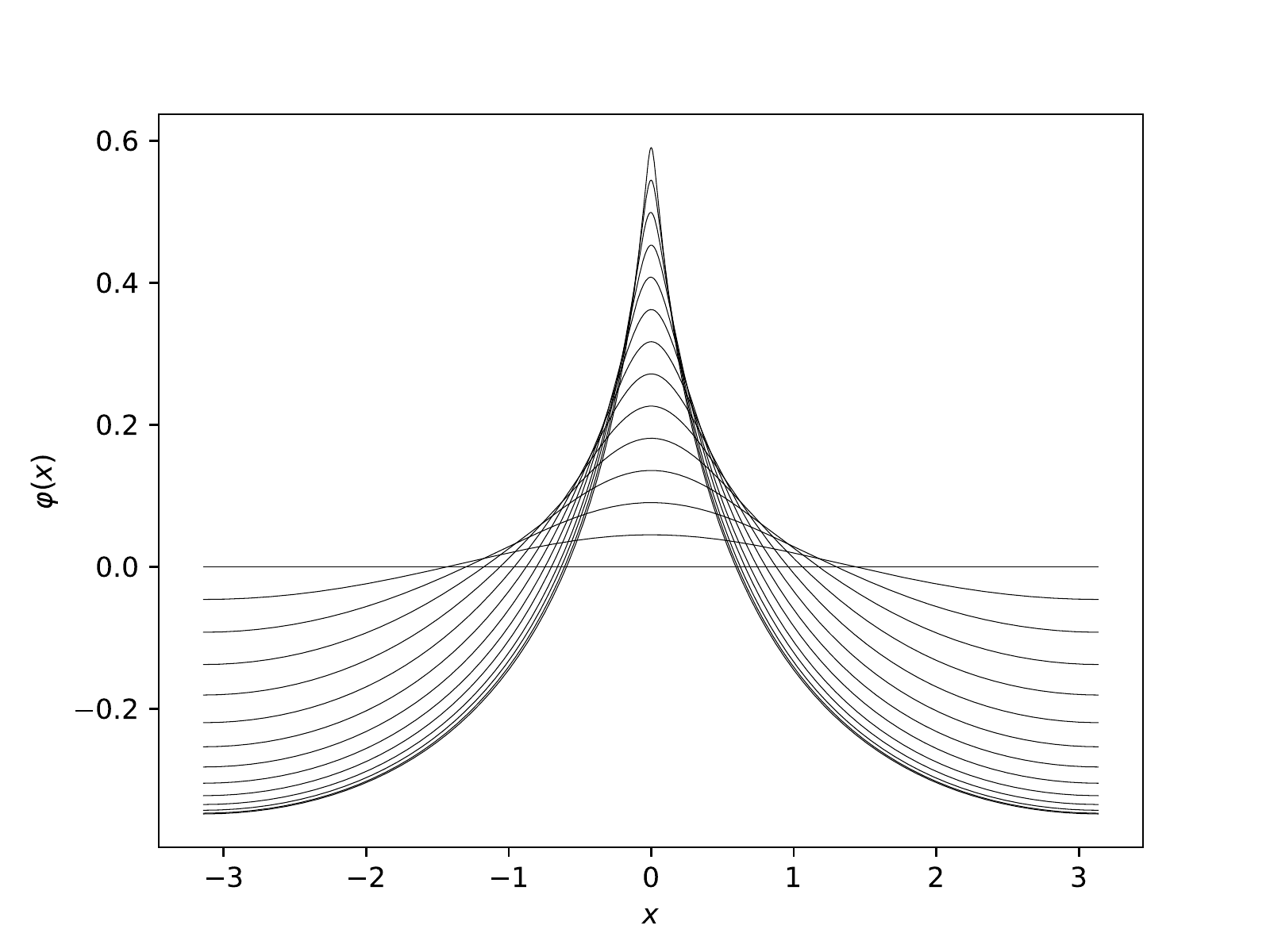}
            \caption{Traveling waves along numerical branch}
            \label{fig:waves_fkdv}
        \end{subfigure}
    \end{subfigure}

    \caption{(a) Bifurcation branches emanating from the trivial solution curve for $\wavespeed \in (0, 1)$, reflected to $\wavespeed \in (0, 1)$ via the Galilean transformation ${(\varphi, \wavespeed) \mapsto (\varphi + 2(1 - \wavespeed), 2 - \wavespeed)}$. Otherwise the lines consisting of constant solutions are locally unique. Local branches extend to global curves, and a highest, cusped traveling wave with $\varphi(0) = \wavespeed$ can be found in the limit. \mbox{(b)-(c)} Numerical example with $P = 2\pi$ and $s = 0.5$.}
    \label{fig:bifurcation_diagram}
\end{figure}

We are now in the position to conclude that a highest traveling-wave solution to the steady fKdV equation exists at the limit of the bifurcation curve $\globalbifurcationcurve$.

\begin{proof}[Proof of Theorem \ref{thm:alt_1_and_2_occur}]
	First we claim that the second component $\wavespeed(t)$ of $\globalbifurcationcurve$ is strictly bounded between $0$ and $1$. Indeed, if $\varphi(t)$ is to cross the line $\wavespeed = 1$, then it would have to vanish by Lemma~\ref{lemma:a_priori_l2_norm}, contradicting Proposition~\ref{prop:exclusion_of_alt_3}. On the other hand, assume that there is a sequence $(\wavespeed_n)_n$ with $\wavespeed_n \rightarrow 0$ as $n \rightarrow 0$. Then by Lemma~\ref{lemma:conv_subsequence_global_bifurcation} there is a uniformly convergent subsequence of $(\varphi_n)_n$, converging to some $\varphi_0$, which is also a solution to the steady fKdV equation. But since $\varphi_n < \wavespeed_n$ along the bifurcation branch, taking the limit one obtains $\varphi_0 \leq 0$. This means that $ \max_{x} \varphi_0(x) = 0$ by Proposition~\ref{prop:a_priori_inf_sup} and therefore $\varphi_0 \equiv 0$. But then
	\begin{equation*}
		0 = \lim_{n \rightarrow \infty} (\wavespeed_n - \varphi_n(P/2)) \gtrsim 1
	\end{equation*}
	owing to Proposition~\ref{prop:upper_regularity_bound}: a contradiction.

	Next, we show that alternative (i) and (ii) from Proposition~\ref{prop:global_bifurcation} occur simultaneously. Assume first that (i) occurs when $t \rightarrow \infty$. This can only happen if $\norm{\varphi(t)}_{\holderspace{0, \beta}} \rightarrow \infty$ since $\wavespeed$ is bounded from above. Aiming at a contradiction, suppose that there exists $\delta > 0$ with
	\begin{equation*}
		\liminf_{t \rightarrow \infty} \inf_{x \in \R} (\wavespeed(t) - \varphi(t)(x)) \geq \delta.
	\end{equation*}
	Then using \eqref{eq:point_comparison_form}, we have for every $x,y \in \R$ that
	\begin{equation*}
		\abs{\varphi(x) - \varphi(y)} = \frac{2\abs{(\bessel{-s}\varphi)(x) - (\bessel{-s}\varphi)(y)}}{\abs{2\wavespeed - \varphi(x) - \varphi(y)}} \leq \frac{|(\bessel{-s}\varphi)(x) - (\bessel{-s}\varphi)(y)|}{\delta}.
	\end{equation*}
	Starting with bounded $\varphi$, iteration of $\bessel{-s}\colon L^{\infty} \rightarrow \zygmundspace{s}$ and $\bessel{-s} \colon \zygmundspace{\beta} \rightarrow \zygmundspace{\beta + s}$ yields $\varphi \in \holderspace{0, \alpha}$ for some $\alpha > \beta$. But now $\norm{\varphi(t)}_{\holderspace{0, \beta}}$ is bounded, which is a contradiction.

	Conversely, suppose (ii) occurs. That is, there exists a sequence $(\varphi_n, \wavespeed_n)_{n \in \N}$ with $\varphi_n' \geq 0$ on $(\nicefrac{-P}{2}, 0)$ and $\varphi_n < \wavespeed_n$ for all $n \in \N$, and
	\begin{equation*}
		\lim_{n \rightarrow \infty} \abs{\wavespeed_n - \varphi_n(0)} = 0.
	\end{equation*}
	Suppose that $\varphi_n$ remains bounded in $\holderspace{0, \beta}(\R)$. Taking the limit of a subsequence in $C^{0, \beta'}(\R)$ for $s < \beta' < \beta$, the limit must be exactly $s$-Hölder regular at the crest by \eqref{eq:exact_sigma_holder_at_crest}, and we arrive at a contradiction to the boundedness of the sequence in $\holderspace{0, \beta}(\R)$. Thus, both alternative (i) and (ii) in Proposition~\ref{prop:global_bifurcation} occur, and for every unbounded sequence $(t_n)_{n \in \mathbb{N}}$ of positive numbers, there exists a subsequence of $(\varphi(t_n), \wavespeed(t_n))_{n \in \mathbb{N}}$ that converges to a solution $(\varphi, \wavespeed)$ to the steady fKdV equation, with $\varphi(0) = \wavespeed$. The limiting wave is even, $P$-periodic, strictly increasing on $(\nicefrac{-P}{2}, 0)$ and exactly \mbox{$s$-Hölder} continuous at $x \in P \mathbb{Z}$.
\end{proof}

\section{The fDP equation} \label{sec:fdp}

We now turn our attention to the steady fDP equation \eqref{eq:main_steady_fDP} with parameters $s \in (0, 1)$ and $\intconst \in \R$ fixed. In Section~\ref{subsec:traveling_waves_regularity_fdp} we prove a priori results about magnitude and regularity of solutions. Existence is then derived by means of a bifurcation argument in Section~\ref{subsec:bifurcation_fdp}. We mainly follow the framework which was used for the fKdV equation above. Inspiration has also been taken from \cite{arnesen}. Details are sometimes omitted to avoid unnecessary repetition.

\subsection{Periodic traveling waves and regularity} \label{subsec:traveling_waves_regularity_fdp}

There are two constant solutions to the steady fDP equation, given by
\begin{equation*} \label{eq:constant_solutions_fDP}
	\constsolneg = \frac{\wavespeed - \sqrt{\wavespeed^2 + 8\intconst}}{4} \qquad \textnormal{and} \qquad \constsolpos = \frac{\wavespeed + \sqrt{\wavespeed^2 + 8\intconst}}{4}.
\end{equation*}
Note that if $\varphi$ solves the steady fDP equation with wave-speed $\wavespeed$, then $-\varphi(-x)$ is a solution to the equation with $-\wavespeed$. So we assume from now on that $\wavespeed > 0$. Writing the fDP equation in the form \eqref{eq:main_steady_fdp_linearized} and using that $\altoperator{-s} \varphi \geq \min \varphi$ and $\altoperator{-s} \varphi \leq \max \varphi$ with strict inequality for nonconstant solutions, we obtain the following.

\begin{proposition} \label{prop:a_priori_inf_sup_fDP}
	If $\varphi$ is a solution to the steady fDP equation, then
	\begin{equation*} \label{eq:solution_ranges_fdp}
		\constsolneg \leq \min \varphi \leq \constsolpos \leq \max \varphi\ \ \textnormal{ or }\ \ \varphi \equiv \constsolneg.
	\end{equation*}
\end{proposition}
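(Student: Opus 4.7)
The approach mirrors Proposition~\ref{prop:a_priori_inf_sup} for the fKdV equation. The plan is to work with the reformulated equation \eqref{eq:main_steady_fdp_linearized}, in which the nonlocal operator $\altoperator{-s}$ acts linearly on $\varphi$, and then exploit that $\altkernel{s}$ is a probability density on $\R$ (Lemma~\ref{lemma:asymptotic_altkernel_behavior}(i)). Consequently $\min \varphi \leq (\altoperator{-s}\varphi)(x) \leq \max\varphi$ pointwise, with both inequalities strict at every point when $\varphi$ is nonconstant, by Lemma~\ref{lemma:altoperator_is_monotone}.

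The key bookkeeping step is to introduce the quadratic $Q(y) = 2 y^2 - \wavespeed y - \intconst$, whose real roots are exactly the constant solutions $\constsolneg \leq \constsolpos$. Multiplying \eqref{eq:main_steady_fdp_linearized} by $4$ and rearranging yields the pointwise identity
\begin{equation*}
    Q\bigparanth{\varphi(x)} = 3 \wavespeed \bigparanth{\varphi(x) - (\altoperator{-s}\varphi)(x)},
\end{equation*}
which converts the $L^{\infty}$-bounds on $\altoperator{-s}\varphi$ into sign information on $Q \circ \varphi$.

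If $\varphi \equiv \constsolpos$ the first alternative holds trivially, and $\varphi \equiv \constsolneg$ is the second. Assume therefore that $\varphi$ is nonconstant, and pick sequences $(x_n), (y_n) \subset \R$ with $\varphi(x_n) \to \min \varphi$ and $\varphi(y_n) \to \max \varphi$ (approximating sequences are used because the extrema need not be attained on $\R$). Since $\wavespeed > 0$ and $(\altoperator{-s}\varphi)(x_n) \geq \min \varphi$, the identity gives $Q(\varphi(x_n)) \leq 3 \wavespeed (\varphi(x_n) - \min \varphi)$, and passing to the limit along with continuity of $Q$ yields $Q(\min \varphi) \leq 0$. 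Since $Q$ is negative precisely between its roots, this places $\min\varphi$ in $[\constsolneg, \constsolpos]$. Symmetrically, $(\altoperator{-s}\varphi)(y_n) \leq \max \varphi$ gives $Q(\max \varphi) \geq 0$, so either $\max \varphi \leq \constsolneg$ or $\max \varphi \geq \constsolpos$. The first subcase would force the sandwich $\constsolneg \leq \min \varphi \leq \max \varphi \leq \constsolneg$, hence $\varphi \equiv \constsolneg$, contradicting nonconstancy. Only $\max \varphi \geq \constsolpos$ remains, which is the first alternative.

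No substantive obstacle is expected; this is a routine $L^{\infty}$ maximum-principle-style argument once the linearized form \eqref{eq:main_steady_fdp_linearized} is available. The only mild subtlety is the use of approximating sequences to cover the possibility that $\min \varphi$ and $\max \varphi$ are not attained on $\R$.
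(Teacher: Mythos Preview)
Your proof is correct and follows exactly the approach the paper indicates in the sentence preceding the proposition: work with the reformulated equation \eqref{eq:main_steady_fdp_linearized} and use that $\altoperator{-s}\varphi$ is trapped between $\min\varphi$ and $\max\varphi$ because $\altkernel{s}$ has unit mass. Your introduction of the quadratic $Q(y) = 2y^2 - \wavespeed y - \intconst$ with roots $\constsolneg, \constsolpos$ is a clean way to package the algebra, and the use of approximating sequences for possibly unattained extrema is a correct refinement of what the paper leaves implicit.
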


The previous proposition shows that if $\intconst \leq 0$ then all solutions are nonnegative, and for $\intconst < -\frac{\wavespeed^2}{8}$ there are no real solutions. We now prove a lemma concerning the nodal properties of solutions to the steady fDP equation.

\begin{lemma} \label{lemma:nodal_properties_fdp}
    Let $P < \infty$. Every $P$-periodic, nonconstant and even solution $\varphi \in \contderspace{1}(\R)$ to the steady fDP equation which is nondecreasing on $(\nicefrac{-P}{2}, 0)$ satisfies
    \begin{equation*}
        \varphi' > 0 \qquad \textnormal{and} \qquad \varphi < \wavespeed
    \end{equation*}
    on $(\nicefrac{-P}{2}, 0)$. If in addition $\varphi \in \contderspace{2}(\R)$, then $\varphi''(0) < 0$ and $\varphi''(\pm P/2) > 0$.
\end{lemma}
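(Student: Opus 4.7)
The plan is to mirror the proof of Lemma \ref{lemma:nodal_properties} for the fKdV equation, using the linearized form \eqref{eq:main_steady_fdp_linearized} of the steady fDP equation together with the monotonicity properties of $\altoperator{-s}$ from Lemma \ref{lemma:altoperator_is_monotone} and Lemma \ref{lemma:altoperator_monotone_odd_fnc}, and the kernel asymptotics from Lemma \ref{lemma:asymptotic_altkernel_behavior}.

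First I would differentiate \eqref{eq:main_steady_fdp_linearized} once to obtain
\begin{equation*}
	(\wavespeed - \varphi)\varphi' = \frac{3}{4}\wavespeed \altoperator{-s}\varphi'.
\end{equation*}
Since $\varphi$ is even and nonconstant with $\varphi' \geq 0$ on $(\nicefrac{-P}{2}, 0)$, the derivative $\varphi'$ is an odd, $P$-periodic and continuous function which is nonnegative and not identically zero on $(\nicefrac{-P}{2}, 0)$. By Lemma \ref{lemma:altoperator_monotone_odd_fnc}, $\altoperator{-s}\varphi' > 0$ on $(\nicefrac{-P}{2}, 0)$, and since $\wavespeed > 0$ is assumed in this section, the right-hand side is strictly positive there. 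Hence $(\wavespeed - \varphi)$ and $\varphi'$ have the same sign and both are nonzero on $(\nicefrac{-P}{2}, 0)$. Combined with $\varphi' \geq 0$, this forces $\varphi' > 0$ and $\varphi < \wavespeed$ on $(\nicefrac{-P}{2}, 0)$.

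For the second part, I would differentiate once more to get
\begin{equation*}
	(\wavespeed - \varphi)\varphi'' = (\varphi')^2 + \frac{3}{4}\wavespeed \altoperator{-s}\varphi''.
\end{equation*}
Evaluating at $x = 0$, where $\varphi'(0) = 0$ by evenness and $C^1$-regularity, yields
\begin{equation*}
	(\wavespeed - \varphi(0))\varphi''(0) = \frac{3}{4}\wavespeed \, (\altoperator{-s}\varphi'')(0) = \frac{3}{2}\wavespeed \int_{0}^{P/2} \paltkernel{s}(y) \varphi''(y)\, \d{y},
\end{equation*}
since both $\paltkernel{s}$ and $\varphi''$ are even. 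I would then split the integral as $\int_0^\varepsilon + \int_\varepsilon^{P/2}$ and use integration by parts on the second piece, exactly as in the proof of Lemma \ref{lemma:nodal_properties}: the first piece vanishes as $\varepsilon \searrow 0$ due to the singularity estimate \eqref{eq:periodic_alt_kernel_singularity} and the boundedness of $\varphi''$; the boundary terms vanish using $\varphi'(P/2) = 0$ and $\varphi'(\varepsilon) = O(\varepsilon)$ against $\paltkernel{s}(\varepsilon) = O(\varepsilon^{s-1})$; and the remaining integral $-\int_0^{P/2} \paltkernel{s}'(y)\varphi'(y)\, \d{y}$ is strictly negative, because by Lemma \ref{lemma:asymptotic_altkernel_behavior} and evenness $\paltkernel{s}' < 0$ on $(0, \nicefrac{P}{2})$, while $\varphi' < 0$ there by the first part and evenness. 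This shows $(\altoperator{-s}\varphi'')(0) < 0$. In particular $\varphi(0) < \wavespeed$ strictly, since $\varphi(0) = \wavespeed$ together with $\varphi''(0) \leq 0$ (local maximum) would contradict the identity. The same identity then yields $\varphi''(0) < 0$. An analogous argument, using the $P$-periodicity of $\paltkernel{s}$ and that $\paltkernel{s}'(\nicefrac{P}{2} + \cdot)$ is strictly positive on $(0, \nicefrac{P}{2})$, gives $\varphi''(\pm \nicefrac{P}{2}) > 0$.

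The main technical point is the integration-by-parts step against the singular kernel $\paltkernel{s}$, but this is already handled in the same way in the fKdV proof and carries over verbatim since Lemma \ref{lemma:asymptotic_altkernel_behavior} provides the same $|x|^{s-1}$ singularity at the origin, smoothness away from the origin, and strict monotonicity on $(\nicefrac{-P}{2}, 0)$ as for $\pbesselkernel{s}$. No additional difficulty arises from the nonlinear nonlocal structure of the original fDP equation, because the linearization \eqref{eq:main_steady_fdp_linearized} has already absorbed it into a linear term in $\varphi$.
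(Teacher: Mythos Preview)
Your proposal is correct and follows essentially the same approach as the paper: differentiate the linearized form \eqref{eq:main_steady_fdp_linearized}, apply Lemma~\ref{lemma:altoperator_monotone_odd_fnc} to get the first conclusion, then differentiate again and reduce to the integration-by-parts argument of Lemma~\ref{lemma:nodal_properties} using the kernel properties from Lemma~\ref{lemma:asymptotic_altkernel_behavior}. Your treatment is in fact slightly more explicit than the paper's, which simply refers back to the fKdV proof at the second step; in particular your remark ruling out $\varphi(0)=\wavespeed$ via the identity is a detail the paper leaves implicit.
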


\begin{proof}
	Under the assumptions above, $\varphi'$ is odd, nontrivial and nonnegative on $(\nicefrac{-P}{2}, 0)$. Differentiation of the equation in the form \eqref{eq:main_steady_fdp_linearized} leads to
    \begin{equation*}
        (\wavespeed - \varphi) \varphi' = \frac{3}{4} \wavespeed \altoperator{-s} \varphi' > 0
    \end{equation*}
    on $(\nicefrac{-P}{2}, 0)$. Hence, $\varphi' > 0$ and $\varphi < \wavespeed$ on $(\nicefrac{-P}{2}, 0)$. Differentiating twice yields
    \begin{equation} \label{eq:twice_differentiated_linearized_form}
        (\wavespeed - \varphi) \varphi'' = \frac{3}{4} \wavespeed \altoperator{-s}\varphi'' + (\varphi')^2,
    \end{equation}
    and proceeding as in the proof of Lemma~\ref{lemma:nodal_properties} by evaluating \eqref{eq:twice_differentiated_linearized_form} in $x = 0$ and using integration by parts and the characterization of $\paltkernel{s}$ from Lemma~\ref{lemma:asymptotic_altkernel_behavior}, we obtain $\varphi''(0) < 0$ and $\varphi''(\pm P/2) > 0$.
\end{proof}

We obtain an analogue of Lemma~\ref{lemma:smooth_away_from_crest} by taking the difference of the steady fDP equation on the form \eqref{eq:main_steady_fdp_linearized} evaluated in two points,
\begin{equation*} \label{eq:point_comparison_form_fDP}
		(2 \wavespeed - \varphi(x) - \varphi(y))(\varphi(x) - \varphi(y)) = \frac{3}{2} \wavespeed \bigparanth{(\altoperator{-s} \varphi)(x) - (\altoperator{-s} \varphi)(y)},
\end{equation*}
and proceeding as in \cite[Lemma 5.2]{ehrnstrom_wahlen}.

\begin{lemma} \label{lemma:smooth_away_from_crest_fDP}
	Let $P \in (0, \infty]$. Assume that $\varphi$ is an even, $P$-periodic and nonconstant solution to the steady fDP equation which is nondecreasing on $(\nicefrac{-P}{2}, 0)$ and with $\varphi \leq \wavespeed$. Then $\varphi$ is strictly increasing on $(\nicefrac{-P}{2}, 0)$.
\end{lemma}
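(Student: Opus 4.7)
The plan is to run a contradiction argument along the lines of \cite[Lemma 5.2]{ehrnstrom_wahlen}, built on the point-comparison form of the linearized steady fDP equation and on the strict positivity of $\altoperator{-s}$ on odd-periodic functions with nontrivial nonnegative part on $(\nicefrac{-P}{2}, 0)$ (Lemma~\ref{lemma:altoperator_monotone_odd_fnc}). First I would subtract \eqref{eq:main_steady_fdp_linearized} evaluated at the two points $x+h$ and $x-h$ to obtain
\begin{equation*}
(2\wavespeed - \varphi(x+h) - \varphi(x-h))\big(\varphi(x+h) - \varphi(x-h)\big) = \frac{3}{2}\wavespeed\big((\altoperator{-s}\varphi)(x+h) - (\altoperator{-s}\varphi)(x-h)\big).
\end{equation*}
Suppose for contradiction that $\varphi$ fails to be strictly increasing on $(\nicefrac{-P}{2}, 0)$. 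Since $\varphi$ is nondecreasing there, this produces a maximal nondegenerate interval $[a,b] \subset (\nicefrac{-P}{2}, 0)$ on which $\varphi$ takes a constant value $c$. Setting $m = (a+b)/2$ and picking any $h \in (0, (b-a)/2)$ which is moreover small relative to $\dist(\set{a,b}, \set{0, -\nicefrac{P}{2}})$, the shifts $m\pm h$ lie in $(a,b)$, so the left-hand side of the identity vanishes at $x=m$. Hence the odd shifted difference $\psi_h(x) = \varphi(x+h) - \varphi(x-h)$ satisfies $(\altoperator{-s}\psi_h)(m) = 0$.

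Next I would check the hypotheses of Lemma~\ref{lemma:altoperator_monotone_odd_fnc} for $\psi_h$. Continuity and $P$-periodicity are inherited from $\varphi$, and the identity $\psi_h(-x) = -\psi_h(x)$ follows from evenness. Nonnegativity of $\psi_h$ on $(\nicefrac{-P}{2}, 0)$ reduces to a case analysis: when $x\pm h$ both lie in $(\nicefrac{-P}{2}, 0)$ it is immediate from monotonicity, whereas in the regimes $x+h>0$ or $x-h<-\nicefrac{P}{2}$ one uses $\varphi(x+h) = \varphi(-x-h)$ and the $P$-periodic reflection $\varphi(x-h) = \varphi(-x+h-P)$ to reduce to a comparison of two points inside $(\nicefrac{-P}{2}, 0)$, which monotonicity resolves. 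For nontriviality of $\psi_h$ on $(\nicefrac{-P}{2}, 0)$ I would invoke the maximality of $[a,b]$: to the right of $b$ we must have $\varphi > c$, so $\psi_h(m+\delta) = \varphi(b+\delta) - \varphi(a+\delta) = \varphi(b+\delta) - c > 0$ for all sufficiently small $\delta > 0$.

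Lemma~\ref{lemma:altoperator_monotone_odd_fnc} then yields $(\altoperator{-s}\psi_h)(x) > 0$ throughout $(\nicefrac{-P}{2}, 0)$, contradicting the vanishing at $x=m$ established above; this contradiction forces strict monotonicity. The solitary case $P = \infty$ is handled identically, with $\altkernel{s}$ in place of $\paltkernel{s}$. The main obstacle I foresee is the elementary but fiddly case analysis verifying $\psi_h \geq 0$ on all of $(\nicefrac{-P}{2}, 0)$: the shifts $x\pm h$ may cross the symmetry and periodicity seams at $0$ and $\pm\nicefrac{P}{2}$, and cleanly discharging these requires combining evenness and $P$-periodicity of $\varphi$ with its one-sided monotonicity. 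Choosing $h$ small enough isolates all such crossings in two narrow subintervals of $(\nicefrac{-P}{2}, 0)$, so the analysis stays routine.
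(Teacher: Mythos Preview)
Your proposal is correct and follows essentially the same approach as the paper, which simply records the point-comparison identity for \eqref{eq:main_steady_fdp_linearized} and refers to \cite[Lemma~5.2]{ehrnstrom_wahlen} for the remainder. Your sketch fleshes out that reference accurately, including the use of Lemma~\ref{lemma:altoperator_monotone_odd_fnc}; the only minor point to watch is that the maximal flat interval $[a,b]$ may abut one of the endpoints $-\nicefrac{P}{2}$ or $0$, but your nontriviality argument for $\psi_h$ adapts immediately (using the other endpoint of $[a,b]$, which must be interior since $\varphi$ is nonconstant).
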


We also have an analogue of Lemma \ref{lemma:regularity_1} by bootstrapping via
\begin{equation} \label{eq:bootstrap_form_fDP}
	\varphi = \wavespeed - \sqrt{\wavespeed^2 + 2 \intconst - 3\bessel{-s} \varphi^2}.
\end{equation}
in the scale of Hölder-Zygmund spaces:

\begin{lemma} \label{lemma:regularity_1_fDP}
    Assume that $\varphi \leq \wavespeed$ is a solution to the steady fDP equation. Then $\varphi$ is smooth on every open set where $\varphi < \wavespeed$.
\end{lemma}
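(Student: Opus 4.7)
The plan is to mimic the bootstrapping proof of Lemma \ref{lemma:regularity_1} using the reformulation \eqref{eq:bootstrap_form_fDP}. Let $V \subset \R$ be open with $\varphi < \wavespeed$ throughout, so that the radicand $\wavespeed^2 + 2\intconst - 3\bessel{-s}\varphi^2 = (\wavespeed - \varphi)^2$ is strictly positive on $V$ and $y \mapsto \sqrt{y}$ is analytic on a neighborhood of its range there. It therefore suffices to upgrade the Hölder-Zygmund regularity of $\bessel{-s}\varphi^2$ on every relatively compact subdomain of $V$, since composition with the smooth square root on a positive domain preserves the regularity class.

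I would initialize the bootstrap from $\varphi \in \lebesguespace{\infty}(\R)$: then $\varphi^2 \in \lebesguespace{\infty}(\R)$ and \eqref{eq:smoothing_operator} gives $\bessel{-s}\varphi^2 \in \zygmundspace{s}(\R)$, whence $\varphi \in \zygmundspace{s}$ on $V$. For the inductive step, suppose $\varphi \in \zygmundspace{\alpha}$ on some $V' \Subset V$ for an $\alpha > 0$. The product estimate $\norm{fg}_{\zygmundspace{\alpha}} \lesssim \norm{f}_{\infty}\norm{g}_{\zygmundspace{\alpha}} + \norm{f}_{\zygmundspace{\alpha}}\norm{g}_{\infty}$, valid for $\alpha > 0$, yields $\varphi^2 \in \zygmundspace{\alpha}(V')$. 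Introducing a smooth cutoff $\chi$ equal to $1$ on $V'' \Subset V'$ and compactly supported in $V'$, one obtains $\chi\varphi^2 \in \zygmundspace{\alpha}(\R)$, and thus $\bessel{-s}(\chi\varphi^2) \in \zygmundspace{\alpha + s}(\R)$ by \eqref{eq:smoothing_operator}. The complementary piece $\bessel{-s}((1-\chi)\varphi^2)$ is smooth on $V''$: in its convolution representation the integrand vanishes in a neighborhood of every $x \in V''$, and away from that neighborhood $\besselkernel{s}$ has rapidly decaying derivatives of all orders by Lemma \ref{lemma:asympt_kernel_behavior}, so differentiation under the integral is justified arbitrarily many times. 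Summing, $\bessel{-s}\varphi^2 \in \zygmundspace{\alpha + s}(V'')$, and composing with the smooth square root yields $\varphi \in \zygmundspace{\alpha + s}(V'')$. Iterating and exhausting $V$ by such subdomains produces $\varphi \in \smoothspace(V)$.

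The single new element compared with the fKdV argument is the need to transfer regularity from $\varphi$ to $\varphi^2$ before invoking the smoothing estimate, which is handled by the product inequality above and is the only (and very mild) technical point. The cutoff device converting local regularity into a global input for \eqref{eq:smoothing_operator} is routine, so I do not expect any substantive obstacle.
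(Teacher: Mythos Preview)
Your proposal is correct and follows exactly the bootstrapping route via \eqref{eq:bootstrap_form_fDP} that the paper indicates (the paper gives no further details, deferring to the fKdV analogue). The product estimate you invoke to pass from $\varphi$ to $\varphi^2$ in the Zygmund scale is precisely the one additional ingredient needed over Lemma~\ref{lemma:regularity_1}, and the cutoff localization is standard.
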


It is clear that traveling waves for the fDP and the fKdV equation share many features. Solutions which are strictly smaller than the wave speed $\wavespeed$ are smooth, but smoothness may break down when the amplitude approaches $\wavespeed$.

\begin{proposition} \label{prop:upper_regularity_bound_fDP}
	Let $P \in (0, \infty]$. Assume that $\varphi$ is an even, $P$-periodic and nonconstant solution to the steady fDP equation which is nondecreasing on $(\nicefrac{-P}{2}, 0)$ with $\varphi \leq \wavespeed$. Then
	\begin{equation*} \label{eq:lower_bound_crest_fDP}
		\wavespeed - \varphi(x) \gtrsim \wavespeed |x|^{s}
	\end{equation*}
	uniformly for $|x| \ll 1$. Moreover, if $P < \infty$ then
	\begin{equation} \label{eq:lower_bound_end_interval_fDP}
		\wavespeed - \varphi(x) \gtrsim \wavespeed.
	\end{equation}
\end{proposition}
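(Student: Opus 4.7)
The plan is to mirror the proof of Proposition \ref{prop:upper_regularity_bound} from the fKdV case, working instead with the reformulated equation \eqref{eq:main_steady_fdp_linearized} so that the nonlocal term is linear in $\varphi$. All required properties of the kernel $\paltkernel{s}$ that replaces $\pbesselkernel{s}$, namely evenness, strict monotonicity on $(\nicefrac{-P}{2}, 0)$, and the singularity $\paltkernel{s}(x) \eqsim \abs{x}^{s-1}$ for $\abs{x} \ll 1$, are supplied by Lemma~\ref{lemma:asymptotic_altkernel_behavior}. By Lemma~\ref{lemma:regularity_1_fDP} the solution $\varphi$ is smooth away from $x = 0$, so differentiating \eqref{eq:main_steady_fdp_linearized} on $(\nicefrac{-P}{2}, 0)$ yields
\begin{equation*}
    (\wavespeed - \varphi)\varphi' = \tfrac{3}{4}\wavespeed\, \altoperator{-s} \varphi',
\end{equation*}
which is the analogue of the identity used in the fKdV argument, now carrying the prefactor $\tfrac{3}{4}\wavespeed$ that will ultimately produce the $\wavespeed$ factor in the lower bounds.

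Assume first $P < \infty$. Using the same symmetrization as in \eqref{eq:first_symmetrization} but for $\paltkernel{s}$ and combining with Fatou's lemma, I would obtain
\begin{equation*}
    (\wavespeed - \varphi(x))\,\varphi'(x) \geq \tfrac{3}{4}\wavespeed \int_{-P/2}^{0} \bigparanth{\paltkernel{s}(x-y) - \paltkernel{s}(x+y)} \varphi'(y) \, dy
\end{equation*}
for $x \in (\nicefrac{-P}{2}, 0)$, where the integrand is pointwise nonnegative by Lemma~\ref{lemma:altoperator_monotone_odd_fnc} applied to $\varphi'$. Fixing $x_0 \in (\nicefrac{-P}{2}, 0)$ and restricting $x$ to $(\nicefrac{x_0}{2}, \nicefrac{x_0}{4})$ yields, exactly as in \eqref{eq:lower_bound_starting_point},
\begin{equation*}
    (\wavespeed - \varphi(z))\,\varphi'(x) \geq \tfrac{3}{4}\wavespeed \int_{x_0/2}^{x_0/4} \bigparanth{\paltkernel{s}(x-y) - \paltkernel{s}(x+y)}\varphi'(y) \, dy, \qquad z \in [\nicefrac{-P}{2}, x].
\end{equation*}
Choosing $x_0 = -\nicefrac{P}{4}$ and estimating the kernel difference from below by the positive constant $C_P = \min\{\paltkernel{s}(x-y) - \paltkernel{s}(x+y) : x, y \in (\nicefrac{x_0}{2}, \nicefrac{x_0}{4})\}$, then integrating in $x$ over $(\nicefrac{x_0}{2}, \nicefrac{x_0}{4})$ and dividing through by the positive difference $\varphi(\nicefrac{x_0}{4}) - \varphi(\nicefrac{x_0}{2})$, gives $\wavespeed - \varphi(-\nicefrac{P}{2}) \gtrsim \wavespeed$, which is \eqref{eq:lower_bound_end_interval_fDP}.

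For the sharper bound $\wavespeed - \varphi(x) \gtrsim \wavespeed \abs{x}^s$, I would let $\abs{x_0} \ll 1$ and invoke the mean value theorem together with $\paltkernel{s}'(x_0) \gtrsim \abs{x_0}^{s-2}$ to replace $C_P$ by $-2y\,\paltkernel{s}'(x_0) \gtrsim \abs{x_0}^{s-1}$, uniformly over $x, y \in (\nicefrac{x_0}{2}, \nicefrac{x_0}{4})$. Integrating in $x$ and dividing by $\varphi(\nicefrac{x_0}{4}) - \varphi(\nicefrac{x_0}{2})$ then yields $\wavespeed - \varphi(x_0) \gtrsim \wavespeed \abs{x_0}^s$, after which evenness of $\varphi$ concludes the estimate. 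The solitary case $P = \infty$ follows by noting that all constants in the periodic estimate depend on $P$ only through properties of $\paltkernel{s}$ that are uniform as $P \to \infty$, and $\altkernel{s}$ shares the relevant monotonicity and singularity properties by Lemma~\ref{lemma:asymptotic_altkernel_behavior}.

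The main obstacle here is the derivative asymptotic $\paltkernel{s}'(x_0) \gtrsim \abs{x_0}^{s-2}$ near the origin: it is not explicitly recorded in Lemma~\ref{lemma:asymptotic_altkernel_behavior}, but it follows from the finite expansion $m(\xi) = 4\sum_{n=1}^{N-1}(-3)^{n-1}\jap{\xi}{-ns} + R(\xi)$ used in the proof of Lemma~\ref{lemma:asymptotic_altkernel_behavior}(ii), combined with the derivative bound in Lemma~\ref{lemma:asympt_kernel_behavior}(iii) applied to the leading Bessel kernel summand and the smoothness of the correction; the contributions of higher order terms $\jap{\xi}{-ns}$ with $n \geq 2$ and of the integrable remainder are subdominant. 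Once this derivative singularity is in place, the rest of the argument is entirely parallel to the fKdV proof, with the prefactor $\tfrac{3}{4}\wavespeed$ being the only arithmetic bookkeeping that yields the $\wavespeed$-weighted right-hand sides in the statement.
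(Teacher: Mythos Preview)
Your proposal is correct and follows essentially the same route as the paper's proof: reformulate via \eqref{eq:main_steady_fdp_linearized}, obtain the integral inequality with the $\tfrac{3}{4}\wavespeed$ prefactor, restrict to $(\nicefrac{x_0}{2},\nicefrac{x_0}{4})$, and use the constant $C_P$ for the trough estimate and the mean value theorem together with the kernel singularity for the crest estimate. One minor point: the nonnegativity of the integrand comes directly from the monotonicity of $\paltkernel{s}$ in Lemma~\ref{lemma:asymptotic_altkernel_behavior}(ii) together with $\varphi' \geq 0$, not from Lemma~\ref{lemma:altoperator_monotone_odd_fnc}; and your remark about the derivative asymptotic $\paltkernel{s}'(x_0) \gtrsim \abs{x_0}^{s-2}$ is well taken --- the paper simply cites \eqref{eq:periodic_alt_kernel_singularity} for the step $-2y\,\paltkernel{s}'(x_0) \gtrsim \abs{x_0}^{s-1}$, leaving the derivative bound implicit in the expansion you describe.
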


\begin{proof}
	We work with the steady fDP equation in the form \eqref{eq:main_steady_fdp_linearized}, assuming first that $P < \infty$. In the same way as in \eqref{eq:lower_bound_starting_point}, we find
	\begin{equation} \label{eq:lower_bound_starting_point_fDP}
		(\wavespeed - \varphi(z)) \varphi'(x) \geq \frac{3}{4} \wavespeed \int_{x_0/2}^{x_0/4} \bigparanth{\paltkernel{s}(x-y) - \paltkernel{s}(x+y)} \varphi'(y) \slot \d{y}
	\end{equation}
	for $x_0 \in (\nicefrac{-P}{2}, 0)$, $x \in (\frac{x_0}{2}, \frac{x_0}{4})$ and $z \in [\nicefrac{-P}{2}, x]$. With
	\begin{equation*}
		C_{P} = \min \set{\paltkernel{s}(x - y) - \paltkernel{s}(x + y) \setsep x, y \in (\frac{x_0}{2}, \frac{x_0}{4})} > 0
	\end{equation*} 
	we deduce 
	\begin{equation*}
		(\wavespeed - \varphi(z)) \geq \frac{3}{16} C_{P} \wavespeed \abs{x_0}.
	\end{equation*}
	Choosing $x_0 = -P/4$ and $z \in (\nicefrac{-P}{2}, \nicefrac{-P}{8})$ gives \eqref{eq:lower_bound_end_interval_fDP}. Next, it suffices to observe that
	\begin{equation*}
		\paltkernel{s}(x - y) - \paltkernel{s}(x + y) \geq -2y \paltkernel{s}'\bigparanth{x_0} \gtrsim_{P} \abs{x_0}^{s-1}
	\end{equation*}
	by the mean value theorem and \eqref{eq:periodic_alt_kernel_singularity} uniformly over $x, y \in (x_0/2, x_0/4)$ with $\abs{x_0} \ll 1$. We insert this is \eqref{eq:lower_bound_starting_point_fDP}, whereupon integration over $x$ and setting $x = x_0$ gives
	\begin{equation*}
		(\wavespeed - \varphi(x_0) \gtrsim \wavespeed (x_0/4 - x_0/2) |x_0|^{s - 1} \gtrsim \wavespeed |x_0|^s
	\end{equation*}
	uniformly for $|x_0| \ll 1$. As before, the estimate can be obtained uniformly for large $P$, thereby proving the solitary case $P = \infty$.
\end{proof}

\begin{theorem} \label{thm:regularity_2_fDP}
	Let $P \in (0, \infty]$, and let $\varphi \leq \wavespeed$ be an even and nonconstant solution to the steady fDP equation which is nondecreasing on $(\nicefrac{-P}{2}, 0)$ and with $\varphi(0) = \wavespeed$. Then $\varphi \in C^{0, s}(\R)$. Moreover,
    \begin{equation*} \label{eq:exact_sigma_holder_at_crest_fDP}
        \wavespeed - \varphi(x) \eqsim |x|^s
    \end{equation*}
    uniformly for $|x| \ll 1$.
\end{theorem}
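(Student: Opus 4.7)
The plan is to follow the proof of Theorem~\ref{thm:regularity_2} almost verbatim, working with the linearized form \eqref{eq:main_steady_fdp_linearized} of the fDP equation so that the nonlocal operator is linear. Taking the difference of \eqref{eq:main_steady_fdp_linearized} evaluated at two points $x$ and $y$ gives
\begin{equation*}
    (2 \wavespeed - \varphi(x) - \varphi(y))(\varphi(x) - \varphi(y)) = \tfrac{3}{2} \wavespeed \bigparanth{(\altoperator{-s}\varphi)(x) - (\altoperator{-s}\varphi)(y)}.
\end{equation*}
Setting $y = 0$ and using $\varphi(0) = \wavespeed$ together with the evenness of $\varphi$ and $\altkernel{s}$, I obtain the analogue of \eqref{eq:holder_bootstrap_in_0}, namely
\begin{equation*}
    (\wavespeed - \varphi(x))^2 = \tfrac{3}{4}\wavespeed \int_{\R} \bigparanth{\altkernel{s}(x+y) + \altkernel{s}(x-y) - 2 \altkernel{s}(y)} (\varphi(0) - \varphi(y)) \slot \d{y}.
\end{equation*}

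The next step is to extract a decomposition $\altkernel{s}(x) = \widetilde{C}_s \abs{x}^{s-1} + \widetilde{J}_s(x)$ mirroring \eqref{eq:singular_regular_kernel}. This follows from the finite expansion of $m(\xi)$ used in the proof of Lemma~\ref{lemma:asymptotic_altkernel_behavior}: taking inverse Fourier transforms gives $\altkernel{s}(x) = 4 \besselkernel{s}(x) + (\textnormal{smoother terms})$, where the additional pieces are either $\besselkernel{ns}$ with $n \geq 2$ (hence less singular than $\abs{x}^{s-1}$ at the origin by Lemma~\ref{lemma:asympt_kernel_behavior}) or the inverse Fourier transform of an integrable function. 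Combining these contributions with the splitting of $\besselkernel{s}$ from Lemma~\ref{lemma:asympt_kernel_behavior} yields a regular remainder $\widetilde{J}_s$ satisfying derivative estimates of the same form as \eqref{eq:k_reg_first_derivative} and \eqref{eq:k_reg_second_derivative}. Once this decomposition is in place, the local bootstrap at the origin proceeds exactly as in Theorem~\ref{thm:regularity_2}: the singular part contributes $\abs{x}^{s + \alpha}$ after a change of variables, the regular part contributes $\abs{x}^2$ via the second-order Taylor remainder of $\widetilde{J}_s$, and iteration of the estimate $(\wavespeed - \varphi(x))^2 \lesssim \abs{x}^{s+\alpha}$ together with the lower bound from Proposition~\ref{prop:upper_regularity_bound_fDP} gives the exact $s$-Hölder behaviour $\wavespeed - \varphi(x) \eqsim \abs{x}^s$ near $x = 0$.

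For global $s$-Hölder regularity, I would again follow the argument of Theorem~\ref{thm:regularity_2}: use
\begin{equation*}
    (\varphi(x+h) - \varphi(x-h))^2 \leq \bigabs{(2\wavespeed - \varphi(x+h) - \varphi(x-h))(\varphi(x+h) - \varphi(x-h))} = \tfrac{3}{2}\wavespeed \bigabs{(\altoperator{-s}\varphi)(x+h) - (\altoperator{-s}\varphi)(x-h)}
\end{equation*}
together with the mapping properties $\altoperator{-s} \colon L^\infty \to \holderspace{0,s}$ and $\altoperator{-s} \colon \zygmundspace{\alpha} \to \zygmundspace{\alpha + s}$ (established after \eqref{eq:main_steady_fdp_linearized}) to get $\holderspace{0,\alpha}$-regularity for every $\alpha < s$; if $s > 1/2$, I pass the integer threshold by the same argument employing \eqref{eq:lower_bound_crest_fDP}. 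Then the interpolation between the global $\holderspace{0,\alpha}$ bound and the exact $\holderspace{0,s}$ bound at the crest is carried out by splitting the kernel as above and estimating the singular and regular parts separately; the singular part produces the same one-dimensional integrals of $\bigabs{\abs{t+1}^{s-1} - \abs{t-1}^{s-1}}$ weighted by powers of $\abs{t}$, and their convergence is guaranteed for every $\eta > 2 - 1/s$ as in the fKdV proof.

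The main obstacle is that $\altkernel{s}$ is not as explicitly characterized as $\besselkernel{s}$, so care is required to extract the singular/regular splitting with the correct pointwise bounds on $\widetilde{J}_s'$ and $\widetilde{J}_s''$. However, this is purely a bookkeeping exercise using the finite expansion of $m(\xi)$ and Lemma~\ref{lemma:asympt_kernel_behavior}, and once it is established the rest of the proof is a line-by-line transcription of the argument for Theorem~\ref{thm:regularity_2}, with the scalar $\tfrac{3}{4}\wavespeed$ absorbed into the implicit constants.
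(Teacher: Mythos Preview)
Your approach via the linearized form \eqref{eq:main_steady_fdp_linearized} and the kernel $\altkernel{s}$ is viable, but it differs from the paper's route and introduces an extra layer of work. The paper stays with the \emph{original} steady fDP equation and the Bessel kernel $\besselkernel{s}$: from $\varphi(0)=\wavespeed$ one gets
\[
(\wavespeed-\varphi(x))^2=\tfrac32\int_{\R}\bigl(\besselkernel{s}(x+y)+\besselkernel{s}(x-y)-2\besselkernel{s}(y)\bigr)\bigl(\varphi^2(0)-\varphi^2(y)\bigr)\,\d y,
\]
and analogously for the first-order difference. The only new ingredient compared with Theorem~\ref{thm:regularity_2} is the elementary bound $\abs{\varphi^2(x)-\varphi^2(y)}\le 2\norm{\varphi}_{L^\infty}\abs{\varphi(x)-\varphi(y)}$, after which every estimate in the fKdV proof carries over verbatim with the explicit splitting $\besselkernel{s}=C_s\abs{x}^{s-1}+\besselkernelreg{s}$ from Lemma~\ref{lemma:asympt_kernel_behavior}. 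No analysis of $\altkernel{s}$ beyond what is already in Lemma~\ref{lemma:asymptotic_altkernel_behavior} is needed.

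Your route requires a singular/regular decomposition of $\altkernel{s}$, and here there is a genuine wrinkle you gloss over. The finite expansion of $m$ gives $\altkernel{s}=4\besselkernel{s}-12\besselkernel{2s}+\cdots$, and for $s<\tfrac12$ the term $\besselkernel{2s}$ still carries a singularity $\abs{x}^{2s-1}$ at the origin (and likewise $\besselkernel{ns}$ for each $ns<1$). If you push all of these into the remainder $\widetilde J_s$, it will \emph{not} satisfy the pointwise derivative bounds \eqref{eq:k_reg_first_derivative}--\eqref{eq:k_reg_second_derivative}: for instance $\abs{(\,\cdot\,)^{2s-1}}''\sim\abs{x}^{2s-3}$, which is not $O(\abs{x}^{s-1})$. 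The fix is to treat each piece $\abs{x}^{ns-1}$ with $ns<1$ as its own singular term and handle it by the same scaling argument (it contributes $\abs{x}^{ns+\alpha}\le\abs{x}^{s+\alpha}$, so the bootstrap still closes); only the genuinely subcritical pieces go into $\widetilde J_s$. This is doable but is exactly the bookkeeping you flagged, and it is not ``of the same form'' as the fKdV splitting. The paper's approach avoids the issue entirely by never leaving $\besselkernel{s}$.
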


\begin{proof}
	Since $\varphi$ touches the value $\wavespeed$ in the origin we have
	\begin{equation*}
        \begin{aligned}
            (\wavespeed - \varphi(x))^2 & = 3 (\bessel{-s}\varphi^2)(0) - 3 (\bessel{-s}\varphi^2)(x) \\
            & = \frac{3}{2} \int_{\R} (\besselkernel{s}(x + y) + \besselkernel{s}(x - y) - 2 \besselkernel{s}(y)) (\varphi^2(0) - \varphi^2(y)) \slot \d{y}.
        \end{aligned}
    \end{equation*}
	In addition, one has the formula
	\begin{equation} \label{eq:global_bootstrap_fDP}
		\begin{aligned}
			& (\varphi(x + h) - \varphi(x - h))^2 \\
			& \leq \bigabs{(2\wavespeed - \varphi(x + h) - \varphi(x - h))(\varphi(x + h) - \varphi(x - h))} \\
			& = 3 \bigabs{(\bessel{-s}\varphi^2)(x + h) - (\bessel{-s}\varphi^2)(x - h)}
		\end{aligned}
	\end{equation}
	with
	\begin{equation*}
        (\bessel{-s}\varphi^2)(x+h) - (\bessel{-s} \varphi^2)(x-h) = \int_{-\infty}^0 (\besselkernel{s}(y+h) - \besselkernel{s}(y-h))(\varphi^2(y-x) - \varphi^2(y+x)) \slot \d{y},
    \end{equation*}
	Thus, the simple observation
    \begin{equation*} \label{eq:fDP_to_fKdV_estimate}
        \abs{\varphi^2(x) - \varphi^2(y)} \leq 2 \norm{\varphi}_{\lebesguespace{\infty}} \abs{\varphi(x) - \varphi(y)}
    \end{equation*}
    combined with Proposition~\ref{prop:upper_regularity_bound_fDP} allows us to prove Theorem~\ref{thm:regularity_2_fDP} in the same way as the proof of Theorem~\ref{thm:regularity_2}.
\end{proof}

\subsection{Bifurcation to a highest wave} \label{subsec:bifurcation_fdp}

We set $\beta \in (s, 1)$ and define
\begin{equation*} \label{eq:bifurcation_function_fDP}
	\bifurcationfuncfDP \colon (\varphi, \wavespeed) \mapsto \wavespeed \varphi - \frac{1}{2} \varphi^2 - \frac{3}{2} \bessel{-s} \varphi^2 + \intconst,
\end{equation*}
mapping $\holderspaceeven{0, \beta}(\ptorus) \times \R$ to $\holderspaceeven{0, \beta}(\ptorus)$. It is practical to bifurcate from a line of trivial solutions, so we consider the function
\begin{equation*} \label{eq:perturbed_bifurcation_function_fDP}
	\trivialfuncfDP(\shiftedsol, \wavespeed) = \bifurcationfuncfDP(\constsolpos(\wavespeed) + \shiftedsol, \wavespeed) = (\wavespeed - \constsolpos(\wavespeed))\shiftedsol - \frac{1}{2} \shiftedsol^2 - \frac{3}{2} \bessel{-s}\shiftedsol^2 - 3\constsolpos(\wavespeed)\bessel{-s}\shiftedsol,
\end{equation*}
where $\constsolpos(\wavespeed)$ is the largest constant solution to the steady fDP equation. Nonconstant periodic solutions have to cross this branch of constant solutions as shown in Lemma~\ref{prop:a_priori_inf_sup_fDP}. Moreover, the Fréchet derivative of $\trivialfuncfDP$ with respect to $\phi$ is
\begin{equation} \label{eq:Frechet_fDP_bifurcation}
    \partial_\phi \trivialfuncfDP[0, \wavespeed] = (\wavespeed - \constsolpos(\wavespeed)) \id - 3 \constsolpos(\wavespeed) \bessel{-s},
\end{equation}
and in order to have bifurcation points along the trivial solution curve of $\trivialfuncfDP = 0$ the kernel of $\partial_\phi \trivialfuncfDP[0, \wavespeed]$ must be nontrivial. That is, there must exist $k \in \N$ such that
\begin{equation} \label{fDP_kernel_dimension}
    \bigjap{\frac{2\pi k}{P}}{-s} = \frac{1}{3} \frac{\wavespeed - \constsolpos(\wavespeed)}{\constsolpos(\wavespeed)},
\end{equation}
which is possible only for $\constsolpos$. Constant $\varphi$-solutions of the problem $\bifurcationfuncfDP(\varphi, \wavespeed) = 0$ maps one-to-one to trivial $\phi$-solutions of the problem ${\trivialfuncfDP(\phi, \wavespeed) = 0}$ via the relation $\shiftedsol = \varphi - \constsolpos(\wavespeed)$. This allows us to prove the following lemma.

\begin{proposition} \label{prop:local_bifurcation_fDP}
    Assume that $-\frac{\wavespeed^2}{8} < \intconst < \infty$ and $P < \infty$.
    \begin{itemize}
        \item[(i)] If $\intconst < 0$, then for every $k \in \N$ with $\frac{2 \pi k}{P} < \sqrt{3^{2/s} - 1}$ there exists $\bifurcationpoint \in (\sqrt{-8 \intconst}, \infty)$,
        \item[(ii)] if $\intconst > 0$, then for every $k \in \N$ with $\frac{2 \pi k}{P} > \sqrt{3^{2/s} - 1}$ there exists $\bifurcationpoint \in (\sqrt{\intconst}, \infty)$
    \end{itemize}
    such that $(\constsolpos(\bifurcationpoint), \bifurcationpoint)$ is a bifurcation point for $\bifurcationfuncfDP$ in each case. Around each bifurcation point there is a local analytic curve
	\begin{equation*}
		\localbifurcationcurvefDP_{P, k} = \set{(\varphi_{P, k}(t), \wavespeed_{P, k}(t)) \setsep t \in (-\varepsilon, \varepsilon)} \subset \holderspaceeven{0, \beta}(\ptorus) \times \R
	\end{equation*}
    such that $\bifurcationfuncfDP(\varphi_{P, k}(t), \wavespeed_{P, k}(t)) = 0$ for all $t \in (-\varepsilon, \varepsilon)$ and $\varphi_{P, k}(0) = \constsolpos(\bifurcationpoint)$. Furthermore, the curves $\localbifurcationcurvefDP_{P, k}$ constitute all nonconstant solutions of the steady fDP equation in a neighborhood of the two constant solution curves.
\end{proposition}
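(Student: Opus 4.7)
The plan is to apply the analytic Crandall--Rabinowitz theorem \cite[Theorem 8.3.1]{buffoni_toland} to the shifted functional $\trivialfuncfDP$ at candidate bifurcation points $(0, \bifurcationpoint)$, and then transport the resulting analytic curves back to $\bifurcationfuncfDP$ via $\varphi = \shiftedsol + \constsolpos(\wavespeed)$. From \eqref{eq:Frechet_fDP_bifurcation}, the derivative $\partial_\shiftedsol \trivialfuncfDP[0, \wavespeed]$ acts diagonally on the Fourier cosine basis of $\holderspaceeven{0, \beta}(\ptorus)$, so its kernel is nontrivial precisely when \eqref{fDP_kernel_dimension} holds for some $k \in \N$; in that case the kernel is one-dimensional and spanned by $\cos(\tfrac{2 \pi k}{P} x)$.

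Writing $A_k := \jap{\tfrac{2 \pi k}{P}}{-s} \in (0, 1)$ and inserting the explicit expression for $\constsolpos(\wavespeed)$ into \eqref{fDP_kernel_dimension}, I would isolate the square root and square once to obtain the equivalent scalar condition
\begin{equation*}
    \intconst = \wavespeed^2 \, \frac{1 - 3 A_k}{(1 + 3 A_k)^2}.
\end{equation*}
Since $k \mapsto A_k$ is strictly decreasing, the sign of $\intconst$ is dictated by whether $A_k$ exceeds $1/3$, which corresponds exactly to the dichotomy $\tfrac{2 \pi k}{P} < \sqrt{3^{2/s} - 1}$ (case (i), $\intconst < 0$) versus $\tfrac{2 \pi k}{P} > \sqrt{3^{2/s} - 1}$ (case (ii), $\intconst > 0$). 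Solving for $\wavespeed$ under the matching sign produces a unique positive $\bifurcationpoint$, and the two direct algebraic identities
\begin{equation*}
    \bifurcationpoint^2 + 8 \intconst = \frac{9 (1 - A_k)^2}{(1 + 3 A_k)^2} \bifurcationpoint^2, \qquad \bifurcationpoint^2 - \intconst = \frac{9 A_k (1 + A_k)}{1 - 3 A_k} \intconst
\end{equation*}
place $\bifurcationpoint$ in $(\sqrt{-8 \intconst}, \infty)$ in case (i) and in $(\sqrt{\intconst}, \infty)$ in case (ii), simultaneously guaranteeing that $\constsolpos(\bifurcationpoint) = \bifurcationpoint / (1 + 3 A_k)$ is real.

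The remaining Crandall--Rabinowitz hypotheses follow the pattern of Proposition \ref{prop:local_bifurcation}: Fredholmness of index zero of $\partial_\shiftedsol \trivialfuncfDP[0, \wavespeed]$ on $\holderspaceeven{0, \beta}(\ptorus)$ follows from compactness of $\bessel{-s}$ through the embedding \eqref{eq:compact_zygmund_embedding}, and the Fourier diagonalization provides a simple kernel and codimension-one image. The only genuinely new input is transversality: differentiating \eqref{eq:Frechet_fDP_bifurcation} in $\wavespeed$ and evaluating at $\cos(\tfrac{2 \pi k}{P} x)$ produces
\begin{equation*}
    \partial^2_{\shiftedsol \wavespeed} \trivialfuncfDP[0, \bifurcationpoint]\bigparanth{1, \cos(\tfrac{2 \pi k}{P} x)} = \bigparanth{1 - \constsolpos'(\bifurcationpoint)(1 + 3 A_k)} \cos(\tfrac{2 \pi k}{P} x),
\end{equation*}
and substituting $\sqrt{\bifurcationpoint^2 + 8 \intconst} = \tfrac{3 \bifurcationpoint (1 - A_k)}{1 + 3 A_k}$ into $\constsolpos'(\wavespeed) = \tfrac{1}{4}\bigparanth{1 + \wavespeed/\sqrt{\wavespeed^2 + 8 \intconst}}$ gives $\constsolpos'(\bifurcationpoint) = \tfrac{1}{3(1 - A_k)}$; the scalar factor thus reduces to $\tfrac{2(1 - 3 A_k)}{3(1 - A_k)}$, which is nonzero precisely because the critical case $\tfrac{2 \pi k}{P} = \sqrt{3^{2/s} - 1}$ is excluded from both hypotheses. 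Crandall--Rabinowitz then supplies the analytic curve of $\trivialfuncfDP$-zeros, and translating by $\constsolpos$ yields $\localbifurcationcurvefDP_{P, k}$. Local uniqueness along the $\constsolpos$-branch follows from the implicit function theorem at parameter values $\wavespeed \neq \bifurcationpoint$, and the analogous linearization along the $\constsolneg$-branch has no kernel, since the analog of \eqref{fDP_kernel_dimension} forces $A_k \geq 1$ for $\wavespeed > 0$, so no nontrivial branches emanate from there. The main obstacle I anticipate is purely algebraic: cleanly extracting $\bifurcationpoint$ from \eqref{fDP_kernel_dimension}, keeping the two sign regimes separated, and simplifying the transversality factor down to $\tfrac{2(1 - 3 A_k)}{3(1 - A_k)}$.
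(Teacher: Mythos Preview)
Your proposal is correct and follows essentially the same approach as the paper: apply the analytic Crandall--Rabinowitz theorem to the shifted functional $\trivialfuncfDP$, verify Fredholmness via compactness of $\bessel{-s}$, identify the one-dimensional kernel from \eqref{fDP_kernel_dimension}, and check transversality by differentiating in $\wavespeed$. The only difference is presentational: where the paper argues existence and uniqueness of $\bifurcationpoint$ qualitatively by analyzing the range of $\wavespeed \mapsto 3\constsolpos(\wavespeed)/(\wavespeed - \constsolpos(\wavespeed))$ and phrases transversality as the condition $\constsolpos'(\bifurcationpoint) \neq \constsolpos(\bifurcationpoint)/\bifurcationpoint$ (which fails exactly when $\intconst = 0$), you square out \eqref{fDP_kernel_dimension} to the closed form $\intconst = \wavespeed^2 (1 - 3A_k)/(1 + 3A_k)^2$ and reduce the transversality coefficient explicitly to $\tfrac{2(1 - 3A_k)}{3(1 - A_k)}$---the two computations are equivalent, and your explicit algebra additionally verifies the ranges $\bifurcationpoint > \sqrt{-8\intconst}$ and $\bifurcationpoint > \sqrt{\intconst}$ that the paper's proof leaves implicit.
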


\begin{proof}
    Since solutions of the problem $G(\varphi, \wavespeed) = 0$ map one-to-one to solutions of ${\trivialfuncfDP(\phi, \wavespeed) = 0}$, it suffices to establish the existence of local bifurcation curves $\localshiftedcurvefDP_{P, k}$ of $\trivialfuncfDP$. We check the assumptions of Crandall-Rabinowitz \cite[Theorem 8.3.1]{buffoni_toland}. The Fréchet derivative of $\trivialfuncfDP$ given by \eqref{eq:Frechet_fDP_bifurcation} is a sum of the scaled identity and the scaled compact operator $\bessel{-s}$. As we have seen before, this implies that $\partial_\phi \trivialfuncfDP[0, \wavespeed]$ is Fredholm of index zero. The kernel of $\partial_\phi \trivialfuncfDP[0, \wavespeed]$ is one-dimensional precisely when there exists a unique $\wavespeed$ such that the equation \eqref{fDP_kernel_dimension} is satisfied, that is
    \begin{equation*}
        \frac{2\pi k}{P} = \sqrt{\biggparanth{3 \frac{\constsol(\wavespeed)}{\wavespeed - \constsol(\wavespeed)}}^{2/s} - 1}.
    \end{equation*}
    The right-hand side of this equation tends to $\sqrt{3^{2/s} - 1}$ when $\wavespeed \rightarrow \infty$. When $\intconst < 0$, the right-hand side is always larger than $\sqrt{3^{2/s} - 1}$, when $\intconst > 0$, the right-hand side is always smaller than $\sqrt{3^{2/s} - 1}$, and equality holds if $\intconst = 0$. Solutions $\wavespeed$ to \eqref{fDP_kernel_dimension} are only possible for the ranges of $P$ and $k$ given in the lemma. For such values of $P$ and $k$, solutions $\bifurcationpoint$ exist and are unique. Note that when $\intconst = 0$, the function is constant, and therefore only satisfied for a single value of $\frac{2\pi k}{P}$.

    For any $(0, \bifurcationpoint)$, the kernel of $\partial_\phi \trivialfuncfDP[0, \wavespeed]$ is one-dimensional and spanned by the function ${\phi_{P, k}^* = \cos(\frac{2\pi k}{P}x)}$. Differentiating $\partial_{\phi} \trivialfuncfDP[0, \bifurcationpoint]$ with respect to the bifurcation parameter $\wavespeed$, one can check that
    \begin{equation*}
        \partial_{\wavespeed \phi} \trivialfuncfDP[0, \bifurcationpoint](\phi_{P, k}^*, 1) = (1 - \constsolpos'(\bifurcationpoint))\phi_{P, k}^* - 3 \constsolpos'(\bifurcationpoint) \bessel{-s} \phi^*_{P, k},
    \end{equation*}
    which belongs to the image of $\partial_{\phi} \trivialfuncfDP[0, \bifurcationpoint]$ if and only if
    \begin{equation*}
        \constsolpos'(\bifurcationpoint) = \frac{\constsolpos(\bifurcationpoint)}{\bifurcationpoint}.
    \end{equation*}
    This is not possible provided $\intconst \neq 0$, and we conclude that the transversality condition holds.
\end{proof}

In contrast to the fKdV equation, Proposition~\ref{prop:local_bifurcation_fDP} shows that for given $s$, local bifurcation for the fDP equation can only happen if the fraction $\frac{2 \pi k}{P}$ is either strictly smaller or strictly larger than $\sqrt{3^{2/s} - 1}$, depending on the parameter $\intconst$. That is, we do not have complete freedom in choosing the period $P$ of solutions. In particular, for $\intconst > 0$ and small $s$ bifurcation only occurs when $P \ll 1$.

From this point on we assume $\intconst > 0$ and consider the local bifurcation branch $\localbifurcationcurvefDP_{P, 1}$ for a fixed period $P$ emanating from the curve $(\constsolpos(\wavespeed), \wavespeed)$ in $\wavespeed_{P, 1}^*$. It is henceforth denoted by $(\varphi(t), \wavespeed(t))$. Furthermore, let
\begin{equation*}
	\submaxsetfDP = \bigset{(\varphi, \wavespeed) \in \holderspaceeven{0, \beta}(\ptorus) \times (\sqrt{\intconst}, \infty) \setsep \varphi < \wavespeed}, \qquad \solutionsetfDP = \bigset{ (\varphi, \wavespeed) \in \submaxsetfDP \setsep \bifurcationfuncfDP(\varphi, \wavespeed) = 0}.
\end{equation*}
The local branch can be parametrized around $t = 0$ in the same way as \eqref{eq:local_branch_parametrization}, only now with $\varphicoeff{0} = \constsolpos(\wavespeed^*) \neq 0$. Moreover, we find that $\varphicoeff{1} = \cos(\frac{2\pi}{P}x)$ and furthermore
\begin{equation*}
	\begin{aligned}
		\varphicoeff{2} & = \frac{1}{3 \constsolpos(\wavespeed^*)} \biggparanth{\frac{1}{\altsymbol(\frac{2\pi}{P})-1} + \frac{1 + 3\altsymbol(\frac{4\pi}{P})}{4 (\altsymbol(\frac{2\pi}{P}) - \altsymbol(\frac{4\pi}{P}))} \cos\bigparanth{\frac{4\pi}{P} x}}, \\
		\wavespeedcoeff{2} & = \frac{1}{3 \constsolpos(\wavespeed^*)} \biggparanth{\frac{1 + 3 \altsymbol(\frac{2\pi}{P})}{\altsymbol(\frac{2\pi}{P}) - 1} + \frac{(1 + 3 \altsymbol(\frac{2\pi}{P}))(1 + 3\altsymbol(\frac{4\pi}{P}))}{8 (\altsymbol(\frac{2\pi}{P}) - \altsymbol(\frac{4\pi}{P}))}}.
	\end{aligned}
\end{equation*}

\begin{proposition} \label{prop:global_bifurcation_fDP}
	For any period $P < 2\pi/ \sqrt{3^{2/s} - 1}$ the local bifurcation branch $(\varphi(t), \wavespeed(t))$ from Proposition~\ref{prop:local_bifurcation_fDP} extends to a global continuous curve ${\globalbifurcationcurvefDP = \set{(\varphi(t), \wavespeed(t)) \setsep t \in [0, \infty)} \subset \submaxsetfDP}$, and one of the following alternatives holds.
	\begin{itemize}
		\item[(i)] $\norm{(\varphi(t), \wavespeed(t))}_{\holderspace{0, \beta} \times \R} \rightarrow \infty$ as $t \rightarrow \infty$,
		\item[(ii)] $\dist(\globalbifurcationcurvefDP, \boundary \submaxsetfDP) = 0$,
		\item[(iii)] $\globalbifurcationcurvefDP$ is a closed loop of finite period.
	\end{itemize}
\end{proposition}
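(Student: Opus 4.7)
The plan is to mirror the argument used for Proposition~\ref{prop:global_bifurcation} and apply \cite[Theorem 9.1.1]{buffoni_toland}. Local analytic bifurcation is already provided by Proposition~\ref{prop:local_bifurcation_fDP} under the assumption $P < 2\pi/\sqrt{3^{2/s} - 1}$, so only two ingredients remain: Fredholmness of index zero for $\partial_\varphi \bifurcationfuncfDP[\varphi, \wavespeed]$ along $\submaxsetfDP$, and relative compactness of closed bounded subsets of $\solutionsetfDP$.

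First I would compute the Fréchet derivative at $(\varphi, \wavespeed) \in \submaxsetfDP$:
\begin{equation*}
    \partial_\varphi \bifurcationfuncfDP[\varphi, \wavespeed] \psi = (\wavespeed - \varphi) \psi - 3 \bessel{-s}(\varphi \psi).
\end{equation*}
Since $\varphi < \wavespeed$, multiplication by $\wavespeed - \varphi$ is a linear homeomorphism on $\holderspaceeven{0, \beta}(\ptorus)$. For the nonlocal term, multiplication by $\varphi \in \holderspaceeven{0, \beta}(\ptorus)$ is bounded on $\zygmundspaceeven{\beta}(\ptorus)$, $\bessel{-s}$ gains $s$ derivatives into $\zygmundspaceeven{\beta + s}(\ptorus)$, and the embedding $\zygmundspace{\beta + s}(\ptorus) \doublehookrightarrow \zygmundspace{\beta}(\ptorus)$ from \eqref{eq:compact_zygmund_embedding} is compact. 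Hence $\psi \mapsto 3 \bessel{-s}(\varphi \psi)$ is a compact perturbation of the homeomorphism, and \cite[Theorem 2.7.6]{buffoni_toland} delivers the Fredholm property of index zero.

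Next I would handle the compactness condition. If $K \subset \solutionsetfDP$ is closed and bounded, then $\wavespeed$ is confined to a bounded subinterval of $(\sqrt{\intconst}, \infty)$, and closedness inside $\submaxsetfDP$ forces a uniform positive lower bound on the gap $\wavespeed - \varphi$ along $K$ (otherwise a limit point would lie on $\boundary \submaxsetfDP$, hence fail to belong to $K$). Feeding this into the bootstrap form
\begin{equation*}
    \varphi = \wavespeed - \sqrt{\wavespeed^2 + 2\intconst - 3\bessel{-s} \varphi^2}
\end{equation*}
from \eqref{eq:bootstrap_form_fDP} keeps the radicand uniformly away from zero so that the square root is smooth, and the smoothing property of $\bessel{-s}$ applied to the bounded family $\varphi^2 \in \zygmundspaceeven{\beta}(\ptorus)$ places $K^1 = \set{\varphi \setsep (\varphi, \wavespeed) \in K}$ in a bounded subset of $\zygmundspaceeven{\beta + s}(\ptorus)$. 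The compact embedding then makes $K^1$ relatively compact in $\holderspaceeven{0, \beta}(\ptorus)$, and closedness upgrades relative compactness to compactness of $K$.

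With both ingredients in hand, \cite[Theorem 9.1.1]{buffoni_toland} produces the global continuous extension $\globalbifurcationcurvefDP = \set{(\varphi(t), \wavespeed(t)) \setsep t \in [0, \infty)} \subset \submaxsetfDP$ and the trichotomy (i)–(iii). The main subtlety I anticipate is the careful treatment of the boundary $\boundary \submaxsetfDP$ in the compactness step: one must really use that closedness in $\submaxsetfDP$ prevents a sequence from approaching the wave-speed threshold, since otherwise the square-root bootstrap would degenerate and the $\zygmundspace{\beta + s}$ estimate would be lost. Once this is secured, everything else is a direct transcription of the fKdV argument, with $\bessel{-s}\varphi$ replaced throughout by $\bessel{-s}\varphi^2$.
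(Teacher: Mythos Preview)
Your proposal is correct and follows essentially the same route as the paper: verify that $\partial_\varphi \bifurcationfuncfDP[\varphi,\wavespeed] = (\wavespeed-\varphi)\id - 3\bessel{-s}(\varphi\,\cdot\,)$ is a compact perturbation of a homeomorphism (hence Fredholm of index zero) on $\submaxsetfDP$, and that closed bounded subsets of $\solutionsetfDP$ are compact via the bootstrap form \eqref{eq:bootstrap_form_fDP} together with the compact embedding \eqref{eq:compact_zygmund_embedding}, then invoke \cite[Theorem 9.1.1]{buffoni_toland}. The paper additionally remarks that the same properties transfer to the shifted map $\trivialfuncfDP$ on $\perturbedsubmaxsetfDP$, $\perturbedsolutionsetfDP$ (since the local bifurcation in Proposition~\ref{prop:local_bifurcation_fDP} was actually carried out for $\trivialfuncfDP$), but this is immediate from the affine change of variables $\varphi = \constsolpos(\wavespeed)+\phi$ and does not alter your argument.
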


\begin{proof}
	Again we verify the assumptions of \cite[Theorem 9.1.1]{buffoni_toland}. The operator $\partial_{\varphi}\bifurcationfuncfDP[\varphi, \wavespeed]$ is Fredholm of index zero for every $(\varphi, \wavespeed) \in \submaxsetfDP$. Indeed,
    \begin{equation*}
        \partial_\varphi \bifurcationfuncfDP[\varphi, \wavespeed] = (\wavespeed - \phi) \id - 3 \bessel{-s}(\phi\, \cdot\, );
    \end{equation*}
	a sum of the identity and a compact operator. Moreover, any closed and bounded subset of $\solutionsetfDP$ is compact, which can be seen from \eqref{eq:bootstrap_form_fDP} in the same way as before. If we let $\perturbedsubmaxsetfDP$ and $\perturbedsolutionsetfDP$ denote the transformed sets $\submaxsetfDP$ and $\solutionsetfDP$ via $\varphi = \constsolpos(\wavespeed) + \phi$, then both of the above claims hold also for $\partial_\phi \trivialfuncfDP$ in $\perturbedsubmaxsetfDP$ and $\perturbedsolutionsetfDP$.
\end{proof}

Recall the definition \eqref{eq:definition_of_cone} of the cone $\cone$ comprising functions in $\holderspaceeven{0, \beta}(\ptorus)$ which are nondecreasing on the half-period $(\nicefrac{-P}{2}, 0)$. By virtue of Lemma~\ref{lemma:nodal_properties_fdp} one can now prove, in the same way as the proof of Proposition~\ref{prop:exclusion_of_alt_3}, that each solution $\varphi \in \globalbifurcationcurvefDP^1 \cap \cone$ which is also in $\solutionsetfDP^1$ lies in the interior of $\cone$. In view of \cite[Theorem 9.2.2]{buffoni_toland} this allows us to derive the following conclusion.

\begin{proposition} \label{prop:exclusion_of_alt_3_fDP}
    The first component $\varphi(t)$ of the global bifurcation curve $\globalbifurcationcurvefDP$ belongs to $\cone \setminus \{\constsolpos\}$ for all $t > 0$, and  alternative (iii) in Proposition~\ref{prop:global_bifurcation_fDP} does not occur.
\end{proposition}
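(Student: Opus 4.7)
The plan is to mirror the proof of Proposition~\ref{prop:exclusion_of_alt_3} using the fDP-specific ingredients developed above. First I would show that every nonconstant $\varphi$ in $\globalbifurcationcurvefDP^1 \cap \cone \cap \solutionsetfDP^1$ lies in the \emph{interior} of $\cone$. Since $\varphi < \wavespeed$ on $\submaxsetfDP$, Lemma~\ref{lemma:regularity_1_fDP} yields that $\varphi$ is smooth on all of $\R$, and Lemma~\ref{lemma:nodal_properties_fdp} then supplies the strict sign information $\varphi' > 0$ on $(\nicefrac{-P}{2}, 0)$ together with $\varphi''(0) < 0$ and $\varphi''(\pm \nicefrac{P}{2}) > 0$. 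A neighbouring solution $\psi \in \solutionsetfDP^1$ in the $\holderspaceeven{0,\beta}$-topology can be turned into a $C^2$-close perturbation by bootstrapping \eqref{eq:bootstrap_form_fDP}, using that $\wavespeed^2 + 2\intconst - 3\bessel{-s}\varphi^2$ stays uniformly bounded away from zero on a small $\submaxsetfDP$-neighbourhood of $\varphi$. The strict second-derivative signs of $\varphi$ then persist for $\psi$ and rule out any sign change of $\psi'$ on $(\nicefrac{-P}{2}, 0)$, placing $\psi \in \cone$.

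Second, I would verify that the initial segment of $\localbifurcationcurvefDP_{P,1}$ actually enters the interior of $\cone$. The Lyapunov--Schmidt expansion computed after Proposition~\ref{prop:local_bifurcation_fDP} gives $\varphi(t) = \constsolpos(\bifurcationpoint) + t\cos(\nicefrac{2 \pi x}{P}) + O(t^2)$, so for small $t > 0$ the derivative $\varphi'(t)$ is strictly positive on $(\nicefrac{-P}{2}, 0)$, which together with the interior property established above lets me invoke \cite[Theorem~9.2.2]{buffoni_toland} and conclude that $\varphi(t) \in \cone$ along the whole curve $\globalbifurcationcurvefDP$. The strict inclusion $\varphi(t) \neq \constsolpos$ for $t > 0$ is then immediate, since every nonconstant function in the interior of $\cone$ is strictly monotone on the half-period.

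To exclude alternative (iii), I would combine the cone property with the local uniqueness given by Crandall--Rabinowitz. If $\globalbifurcationcurvefDP$ were a closed loop, it would have to return to a bifurcation point on the constant solution curve $\wavespeed \mapsto (\constsolpos(\wavespeed), \wavespeed)$, which by Proposition~\ref{prop:local_bifurcation_fDP} must be the original $(\constsolpos(\bifurcationpoint), \bifurcationpoint)$. In a neighbourhood of this point, all nonconstant solutions lie on the analytic branch $\localbifurcationcurvefDP_{P,1}$, and only the half corresponding to the Lyapunov--Schmidt parameter $t > 0$ intersects the interior of $\cone$; the half $t < 0$ yields functions that are strictly \emph{decreasing} on $(\nicefrac{-P}{2}, 0)$. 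Consequently both endpoints of the hypothetical loop would approach the bifurcation point along the \emph{same} half-branch, forcing a non-trivial self-intersection that contradicts the injective parametrization of $\globalbifurcationcurvefDP$ inherited from \cite[Theorem~9.1.1]{buffoni_toland}.

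The main delicate step is the $C^2$-persistence of strict monotonicity under $\holderspaceeven{0,\beta}$-small perturbations, because the fDP bootstrap \eqref{eq:bootstrap_form_fDP} involves the quadratic expression $\bessel{-s}\varphi^2$ rather than the linear $\bessel{-s}\varphi$ of the fKdV setting. This requires a short Moser-type argument showing that $\varphi \mapsto \varphi^2$ is locally Lipschitz between the relevant Hölder--Zygmund balls, after which the iteration proceeds with only cosmetic changes compared to the fKdV case and the rest of the argument goes through verbatim.
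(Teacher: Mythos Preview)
Your proposal is correct and follows essentially the same route as the paper, which simply states that by virtue of Lemma~\ref{lemma:nodal_properties_fdp} one proceeds as in the proof of Proposition~\ref{prop:exclusion_of_alt_3} to show that every nonconstant solution in $\globalbifurcationcurvefDP^1 \cap \cone \cap \solutionsetfDP^1$ lies in the interior of $\cone$, and then invokes \cite[Theorem~9.2.2]{buffoni_toland}. Your write-up is more detailed---you flag the quadratic bootstrap step for $\bessel{-s}\varphi^2$ and spell out the closed-loop exclusion via local uniqueness---but the underlying strategy is identical.
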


If the wave speed $\wavespeed$ is bounded along the bifurcation curve, we can find a limiting solution at the end of the curve:

\begin{lemma} \label{lemma:conv_subsequence_global_bifurcation_fDP}
	Any sequence of solutions $(\varphi_n, \wavespeed_n)_{n \in \N} \subset \solutionsetfDP$ to the steady fDP equation with bounded $(\wavespeed_n)_{n \in \N}$ converges uniformly along a subsequence to a solution $(\varphi, \wavespeed)$.
\end{lemma}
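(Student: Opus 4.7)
The plan is to mirror the argument of Lemma~\ref{lemma:conv_subsequence_global_bifurcation} for the fKdV equation, the main twist being that the nonlocal operator $\bessel{-s}$ now acts on the quadratic term $\varphi^2$ rather than on $\varphi$ itself.

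First I would establish a uniform $L^{\infty}$-bound on $(\varphi_n)_n$. Since $(\varphi_n, \wavespeed_n) \in \solutionsetfDP$, the defining inequality $\varphi_n < \wavespeed_n$ together with boundedness of $(\wavespeed_n)_n$ gives an upper bound, while Proposition~\ref{prop:a_priori_inf_sup_fDP} yields the lower bound $\varphi_n \geq \constsolneg(\wavespeed_n) = (\wavespeed_n - \sqrt{\wavespeed_n^2 + 8\intconst})/4$, which is uniformly bounded in $n$ by continuity in $\wavespeed$. Hence $\norm{\varphi_n^2}_{\lebesguespace{\infty}(\R)} \lesssim 1$.

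Next I would argue uniform equicontinuity of $(\bessel{-s}\varphi_n^2)_n$, exactly as in Lemma~\ref{lemma:conv_subsequence_global_bifurcation}, using the integrability of $\besselkernel{s}$ and its continuity off the origin (Lemma~\ref{lemma:asympt_kernel_behavior}) together with the standard convolution estimate
\begin{equation*}
\bigabs{(\bessel{-s}\varphi_n^2)(x+h) - (\bessel{-s}\varphi_n^2)(x)} \leq \norm{\varphi_n}_{\infty}^2 \int_\R \abs{\besselkernel{s}(y+h) - \besselkernel{s}(y)} \slot \d{y},
\end{equation*}
whose right-hand side tends to zero uniformly in $n$ as $h \to 0$. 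Combined with the uniform $L^\infty$-bound, Arzela--Ascoli on the compact torus $\ptorus$ yields, after also passing to a subsequence along which $\wavespeed_n \to \wavespeed$, uniform convergence of $\bessel{-s}\varphi_n^2$ to some continuous limit.

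Finally, I would invoke the bootstrap form~\eqref{eq:bootstrap_form_fDP},
\begin{equation*}
\varphi_n = \wavespeed_n - \sqrt{\wavespeed_n^2 + 2\intconst - 3\bessel{-s}\varphi_n^2},
\end{equation*}
which is legitimate since $\varphi_n \leq \wavespeed_n$ forces the radicand to equal $(\wavespeed_n - \varphi_n)^2 \geq 0$. The radicands being uniformly bounded and $t \mapsto \sqrt{t}$ being uniformly continuous on compact subsets of $[0, \infty)$, the right-hand side converges uniformly, so $\varphi_n \to \varphi$ uniformly for some continuous $\varphi$. Uniform convergence of $\varphi_n$ propagates to $\varphi_n^2$, hence to $\bessel{-s}\varphi_n^2 \to \bessel{-s}\varphi^2$, and passage to the limit in $-\wavespeed_n \varphi_n + \tfrac{1}{2}\varphi_n^2 + \tfrac{3}{2}\bessel{-s}\varphi_n^2 = \intconst$ shows that $(\varphi, \wavespeed)$ is again a solution. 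I do not anticipate a genuine obstacle here: the quadratic nonlinearity is harmless once the uniform $L^\infty$-bound is in place, and every subsequent step is a direct transcription of the fKdV argument.
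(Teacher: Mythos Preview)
Your proposal is correct and follows essentially the same approach as the paper: obtain a uniform $L^\infty$-bound on $(\varphi_n)_n$, use integrability and continuity of $\besselkernel{s}$ to get uniform equicontinuity of $(\bessel{-s}\varphi_n^2)_n$, apply Arzel\`a--Ascoli, and read off convergence of $\varphi_n$ from the equation. The only minor difference is how the $L^\infty$-bound is produced---the paper uses the equation directly together with $\bessel{-s}\varphi^2 \geq 0$ to get $\norm{\varphi}_\infty^2 \leq 2\intconst + 2\wavespeed\norm{\varphi}_\infty$, whereas you use the structural bounds $\constsolneg(\wavespeed_n) \leq \varphi_n < \wavespeed_n$ coming from membership in $\solutionsetfDP$ and Proposition~\ref{prop:a_priori_inf_sup_fDP}; both are valid and the rest of the argument is identical.
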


\begin{proof}
    Assume that $(\wavespeed_n)_n$ is bounded. Since $\varphi^2 > 0$ we have
    \begin{equation*}
        \norm{\varphi}_{\lebesguespace{\infty}}^2 \leq 2 \intconst + 2 \wavespeed \norm{\varphi}_{\lebesguespace{\infty}},
    \end{equation*}
    so $(\varphi_n)_n$ is bounded. This implies that $(\bessel{-s} \varphi_n^2)_n$ is uniformly equicontinuous ($K_{s}$ is integrable and continuous). So $(\bessel{-s}\varphi_n)_n$ has a uniformly convergent subsequence by Arzela--Ascoli, which also gives a uniformly convergent subsequence for $(\varphi_n)_n$.
\end{proof}

However, for the fDP equation the global bifurcation curve $\globalbifurcationcurvefDP$ is not necessarily bounded in~$\wavespeed$ --- alternative (i) could happen in that $\wavespeed(t) \to \infty$  while $\varphi(t) < \wavespeed(t)$ for all $t > 0$. But for small enough periods, it is possible to exclude this situation.

\begin{proposition} \label{prop:no_sols_large_speeds}
	For sufficiently small periods $P > 0$, the global bifurcation curve ${\globalbifurcationcurvefDP}$ from Proposition~\eqref{prop:global_bifurcation_fDP} is bounded from above in $\wavespeed$. That is, there exists a constant $\overline{\mu} > 0$ such that
	\begin{equation*}
		\sup_{t \in [0, \infty)} \mu(t) \leq \overline{\mu}.
	\end{equation*}
\end{proposition}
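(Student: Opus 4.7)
The strategy is a rescaling argument: if $\wavespeed$ were unbounded along $\globalbifurcationcurvefDP$, then the solutions $\varphi/\wavespeed$ would converge to a nontrivial solution of the limiting problem $(\wavespeed,\intconst) = (1,0)$, which for small $P$ can be ruled out via an implicit-function-theorem argument. Contradiction is ultimately obtained against Proposition~\ref{prop:exclusion_of_alt_3_fDP}, which says the curve stays in the interior of the cone.

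Assume for contradiction that there exists a sequence $(t_n) \subset [0,\infty)$ with $\wavespeed_n := \wavespeed(t_n) \to \infty$, and set $\varphi_n := \varphi(t_n)$, $\psi_n := \varphi_n/\wavespeed_n$. From Proposition~\ref{prop:a_priori_inf_sup_fDP} and $\varphi_n < \wavespeed_n$ we get $\constsolneg(\wavespeed_n)/\wavespeed_n \to 0$ and $\psi_n < 1$, so $(\psi_n)$ is uniformly bounded in $L^\infty$. Dividing the fDP equation by $\wavespeed_n^2$ yields
\begin{equation*}
    -\psi_n + \tfrac{1}{2}\psi_n^2 + \tfrac{3}{2}\bessel{-s}\psi_n^2 = \intconst/\wavespeed_n^2.
\end{equation*}
Using that $\besselkernel{s}$ is integrable and continuous off the origin, the family $(\bessel{-s}\psi_n^2)$ is uniformly equicontinuous, and Arzela--Ascoli produces a uniform limit $\psi$ along a subsequence. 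The limit is even, $P$-periodic, nondecreasing on $(\nicefrac{-P}{2},0)$, satisfies the steady fDP equation with $(\wavespeed,\intconst) = (1,0)$, and inherits $\max \psi \geq 1/2$ from $\max \varphi_n \geq \constsolpos(\wavespeed_n) \sim \wavespeed_n/2$.

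The key step is then to show $\psi \equiv 1/2$, the $\constsolpos$ of the limiting problem. The Fréchet derivative of the rescaled functional $\Psi_\infty(\psi) := -\psi + \tfrac12\psi^2 + \tfrac32\bessel{-s}\psi^2$ at $\psi \equiv 1/2$ is $-\tfrac12\id + \tfrac32\bessel{-s}$, with eigenvalues $-\tfrac12 + \tfrac32\jap{2\pi k/P}{-s}$ on $\cos(2\pi k x/P)$. These are nonzero for all $k \in \Z$ whenever $P < 2\pi/\sqrt{3^{2/s}-1}$ (strictly negative for $k \neq 0$, equal to $1$ for $k = 0$), so this operator is an isomorphism of $\holderspaceeven{0,\beta}(\ptorus)$. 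By the IFT applied to the parametrized problem $\Psi(\psi,\lambda) := \Psi_\infty(\psi) - \lambda = 0$ at $(1/2,0)$, solutions with small $\lambda = \intconst/\wavespeed^2$ and $\psi$ close to $1/2$ in $C^{0,\beta}$ lie on the smooth branch $\psi = \constsolpos(\wavespeed)/\wavespeed$. Hence for $n$ large, $\varphi_n \equiv \constsolpos(\wavespeed_n)$, contradicting $\varphi(t) \in \cone \setminus \{\constsolpos\}$ from Proposition~\ref{prop:exclusion_of_alt_3_fDP}.

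\textbf{Main obstacle.} The delicate step is upgrading the uniform convergence $\psi_n \to \psi$ to convergence in $C^{0,\beta}$, so that the IFT applies. Bootstrapping the rescaled version of \eqref{eq:bootstrap_form_fDP} gives uniform $C^{0,\alpha}$-bounds for $(\psi_n)$ with $\alpha > \beta$ provided $\max \psi_n$ stays bounded away from $1$. If instead $\max \psi_n \to 1$, the limit $\psi$ would be a rescaled highest wave of the $(\wavespeed,\intconst) = (1,0)$ problem — this scenario must be excluded separately for $P < 2\pi/\sqrt{3^{2/s}-1}$ small enough, using that on the small-$P$ bifurcation regime, the linearization of $\Psi_\infty$ around the trivial solution has its kernel frequency $\sqrt{3^{2/s}-1}$ strictly below the fundamental frequency $2\pi/P$, precluding even (large-amplitude) non-trivial periodic limits of the required monotone shape.
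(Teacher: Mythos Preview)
Your rescaling approach is genuinely different from the paper's and is a natural idea, but as written it has a real gap. The implicit function theorem at $(\psi,\lambda)=(1/2,0)$ gives only \emph{local} uniqueness: solutions of $\Psi(\psi,\lambda)=0$ that are $C^{0,\beta}$-close to $1/2$ and have small $\lambda$ lie on the constant branch. It does \emph{not} establish that every even, $P$-periodic, monotone-on-half-period solution of the limiting problem $(\wavespeed,\intconst)=(1,0)$ with $\psi\le 1$ equals $1/2$. Your uniform limit $\psi$ could, a priori, be a large-amplitude solution (smooth or highest) far from $1/2$, and then the IFT says nothing. The sentence ``precluding even (large-amplitude) non-trivial periodic limits'' is exactly the global uniqueness statement that needs proof, and invertibility of the linearization at the constant is not enough to deliver it.

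The paper proceeds quite differently and avoids this issue altogether. It works with the reformulation \eqref{eq:main_steady_fdp_linearized}, differentiates, multiplies by $\varphi_n'$, and integrates over a period to obtain
\[
\int_{\ptorus}|\varphi_n'|^2\,\d{x}\le\Big(\tfrac{1}{\wavespeed_n}\max\varphi_n+\tfrac{3}{4}\altsymbol(\tfrac{2\pi}{P})\Big)\int_{\ptorus}|\varphi_n'|^2\,\d{x},
\]
using Parseval and $\widehat{\varphi_n'}(0)=0$. The factor is then shown to be strictly below $1$ for small $P$ and large $\wavespeed_n$ via a pointwise estimate $\max\varphi_n/\wavespeed_n\le 1/2+\intconst/\wavespeed_n^2+CP^{s/2}$, obtained from the first-step H\"older bound at the crest combined with $\varphi_n(x_0)=\constsolpos$ for some $x_0$. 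This forces $\varphi_n'\equiv 0$, contradicting Proposition~\ref{prop:exclusion_of_alt_3_fDP}. Note that this energy inequality, specialized to $(\wavespeed,\intconst)=(1,0)$, is precisely the global uniqueness statement your argument is missing---so completing your route would in effect require the paper's estimate anyway.
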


\begin{remark}
	The requirement of a sufficiently small period for boundedness in the wave speed, and thereby convergence to a highest wave, is not just a technical condition --- numerical computations indicate that for large values of $P$ (of course still with $P < 2\pi / \sqrt{3^{2/s} - 1}$), the curve does not converge to a highest wave but grows without bound with $\varphi(t) < \wavespeed(t)$ for all $t > 0$.
\end{remark}

\begin{proof}
	Let $(\varphi_n, \wavespeed_n)_{k \in \N}$ be a sequence of solutions along the bifurcation curve $\globalbifurcationcurvefDP$, and assume by contradiction that $\wavespeed_n \to \infty$ as $k \to \infty$. Differentiating the fDP equation for $\varphi_n$ in the form \eqref{eq:main_steady_fdp_linearized} (note that $\varphi_n$ is smooth), multiplying with $\varphi_n'$, taking the absolute value, and integrating over one period gives
	\begin{equation} \label{eq:square_diff_eq}
		\int_{-P/2}^{P/2} \abs{\varphi'_n}^2 \slot \d{x} = \frac{1}{\wavespeed_n} \int_{-P/2}^{P/2} \varphi_n \abs{\varphi'_n}^2 \slot \d{x} + \frac{3}{4} \int_{-P/2}^{P/2} \varphi'_n \altoperator{-s}\varphi'_n \slot \d{x}.
	\end{equation}
	Using Parseval's theorem for the integral in the last term, we find
	\begin{equation*}
		\frac{1}{P} \int_{-P/2}^{P/2} \varphi'_n \altoperator{-s}\varphi'_n \d{x} = \sum_{k \in \Z} a_k \altsymbol\bigparanth{\frac{2\pi k}{P}} a_k \leq \altsymbol\bigparanth{\frac{2\pi}{P}} \sum_{k \in \Z} \abs{a_k}^2 = \altsymbol\bigparanth{\frac{2\pi}{P}} \frac{1}{P} \int_{-P/2}^{P/2} \abs{\varphi'_n}^2 \d{x},
	\end{equation*}
	where $a_k$ are the Fourier coefficients of $\varphi'_n$, and we have used that $a_0 = 0$ since $\varphi'_n$ is odd. From \eqref{eq:square_diff_eq} we now have
	\begin{equation*}
		\int_{-P/2}^{P/2} \abs{\varphi'_n}^2 \slot \d{x} \leq \biggparanth{\frac{1}{\mu_n} \max_{x \in \ptorus} \varphi_n + \frac{3}{4}\altsymbol\bigparanth{\frac{2\pi}{P}}} \int_{-P/2}^{P/2} \abs{\varphi'_n}^2 \slot \d{x}.
	\end{equation*}
	If we can show that the factor in front of the integral on the right-hand side is strictly smaller than $1$ when $\wavespeed_n \rightarrow \infty$ for sufficiently small $P$, this inequality implies $\varphi' \equiv 0$, and we reach a contradiction to Proposition \ref{prop:exclusion_of_alt_3_fDP}. To that end, we recall the first estimate for Hölder regularity in $x = 0$ from Theorem~\ref{thm:regularity_2_fDP}; assuming now that $\varphi_n(0) < \wavespeed_n$, it takes the form
	\begin{equation*}
        \begin{aligned}
			(\varphi_n(0) - \varphi_n(x))^2 & \leq (\wavespeed_n - \varphi_n(x))(\varphi_n(0) - \varphi_n(x)) \leq (2\wavespeed_n - \varphi_n(0) - \varphi_n(x))(\varphi_n(0) - \varphi_n(x)) \\
			& = \frac{3}{2} \int_{\R} (\besselkernel{s}(x + y) + \besselkernel{s}(x - y) - 2 \besselkernel{s}(y)) (\varphi_n^2(0) - \varphi_n^2(y)) \slot \d{y} \\
			& \leq 3 \norm{\varphi_n}_{L^\infty}^2 \int_{\R} \abs{\besselkernel{s}(x + y) + \besselkernel{s}(x - y) - 2 \besselkernel{s}(y)} \slot \d{y} \\
			& \leq C \norm{\varphi_n}_{L^\infty}^2 \abs{x}^s,
        \end{aligned}
    \end{equation*}
	where the final constant only depends on $s$ (via the integral of $K_s$). Note that we have used than $\varphi_n$ is even and nondecreasing on $(\nicefrac{-P}{2}, 0)$ in the above calculation, due to Proposition~\ref{prop:exclusion_of_alt_3_fDP}. This means that
	\begin{equation*}
			\frac{1}{\wavespeed_n} \max_{x \in \ptorus} \varphi_n = \frac{1}{\wavespeed_n} \varphi_n(0) \leq \frac{1}{\wavespeed_n} \varphi_n(x) + \frac{1}{\wavespeed_n} \abs{\varphi_n(0) - \varphi_n(x)} \leq \frac{1}{\wavespeed_n} \varphi_n(x) + \frac{C}{\wavespeed_n} \norm{\varphi_n}_{L^\infty} \abs{x}^{\frac{s}{2}}
	\end{equation*}
	for $x \in \ptorus$. But by Proposition \ref{prop:a_priori_inf_sup_fDP} we know that there is $x_0 \in \ptorus$ such that $\varphi_n(x_0) = \constsolpos$, which implies
	\begin{equation*}
		\frac{1}{\wavespeed_n} \max_{x \in \ptorus} \varphi_n \leq \frac{\constsolpos}{\wavespeed_n} + \frac{C}{\wavespeed} \norm{\varphi_n}_{L^\infty} \abs{x_0}^{\frac{s}{2}} < \frac{1}{2} + \frac{\intconst}{\wavespeed_n^2}+ C P^{\frac{s}{2}},
	\end{equation*}
	where we have used that $\constsolpos / \wavespeed_n \leq 1/2 + \intconst/\wavespeed_n^2$ and $\norm{\varphi_n}_{L^\infty} < \wavespeed$. Consequently, the factor
	\begin{equation*}
		\frac{1}{\mu_n} \max_{x \in \ptorus} \varphi_n + \frac{3}{4}\altsymbol\bigparanth{\frac{2\pi}{P}} \leq \frac{1}{2} + \frac{\intconst}{\wavespeed_n^2}+ C P^{\frac{s}{2}} + \frac{3}{4}\altsymbol\bigparanth{\frac{2\pi}{P}}
	\end{equation*}
	is strictly below $1$ for sufficiently small $P$ as $\wavespeed_n$ becomes large, concluding the proof.
\end{proof}

\newcommand{\constantcurves}[2]{%
    \draw[blue, thick]
    (-4, #1) to[out=0, in=193] (0.5, 0.5) to[out=13, in = 210] (4, #1+1.8);
    \draw[blue, thick]
    (-4, -#1-1.8) to[out=30, in=193] (-0.5, -0.5) to[out=13, in = 180] (4, -#1);
}
\newcommand{\intersections}[1]{%
    \draw[thick, dotted]
    (-#1, -2) to (-#1, 2) coordinate[label = {above:$-\sqrt{\intconst}$}];
    \draw[thick, dotted]
    (#1, -2) to (#1, 2) coordinate[label = {above:$\sqrt{\intconst}$}];
}
\newcommand{\bifurcationrightfdp}[3]{
    \draw[thick, blue] plot[smooth, tension=1]
    coordinates {(#1, #2) (#1 + 0.05, #2 + 0.18 * #3) (#1 + 0.35, #2 + 0.66 * #3) (#1 + 0.35, #2 + #3)};
}
\newcommand{\bifurcationleftfdp}[3]{
    \draw[thick, blue, densely dashed] plot[smooth, tension=1]
    coordinates {(#1, #2) (#1 - 0.05, #2 - 0.18 * #3) (#1 - 0.35, #2 - 0.66 * #3) (#1 - 0.35, #2 - #3)};
}

\begin{figure}[t]
    \centering
    \begin{subfigure}[b]{0.6\textwidth}
        \centering
        \resizebox{0.95\textwidth}{!}{%
        \begin{tikzpicture}
            \draw[line width=0.5em, gray!50, cap=round] plot[smooth, tension=1]
            (-4, 0.15) to[out=0, in=193] (0.44, 0.49);
            \draw[line width=0.5em, gray!50, cap=round] plot[smooth, tension=1]
            (-0.44, -0.49) to[out=13, in = 180] (4, -0.15);
            \draw[thick, black, ->]
            (-4, 0) to (4.2,0) coordinate[label = {right:$\mu$}];
            \draw[very thick, ->]
            (0,-3) to (0,3.5);
            \draw[thick, black]
            (-3, -3) to (3, 3);
            \constantcurves{0.15}{0.5}
            \intersections{0.5}
            \draw
            (3, 3) coordinate[label = {[rotate=45] right:$\mu$}];
            \draw
            (1.5, 2.8) coordinate[label = {$\bm{\max \varphi}$}];
            \draw
            (-1.5, 2.8) coordinate[label = {$\bm{\min \varphi}$}];
            \bifurcationrightfdp{2.4}{1.11}{1.65}
            \bifurcationrightfdp{1.8}{0.875}{1.27}
            \bifurcationrightfdp{1.2}{0.68}{0.87}
            \bifurcationleftfdp{-2.4}{-1.11}{1.65}
            \bifurcationleftfdp{-1.8}{-0.875}{1.27}
            \bifurcationleftfdp{-1.2}{-0.68}{0.87}
        \end{tikzpicture}
        }
        \vspace{0.9cm}
        \caption{Bifurcation diagram}
        \label{fig:bifurcation_sub_diagram_fdp}
    \end{subfigure}
    \begin{subfigure}[b]{0.35\textwidth}
        \begin{subfigure}[b]{\textwidth}
            \centering
            \includegraphics[width=\textwidth]{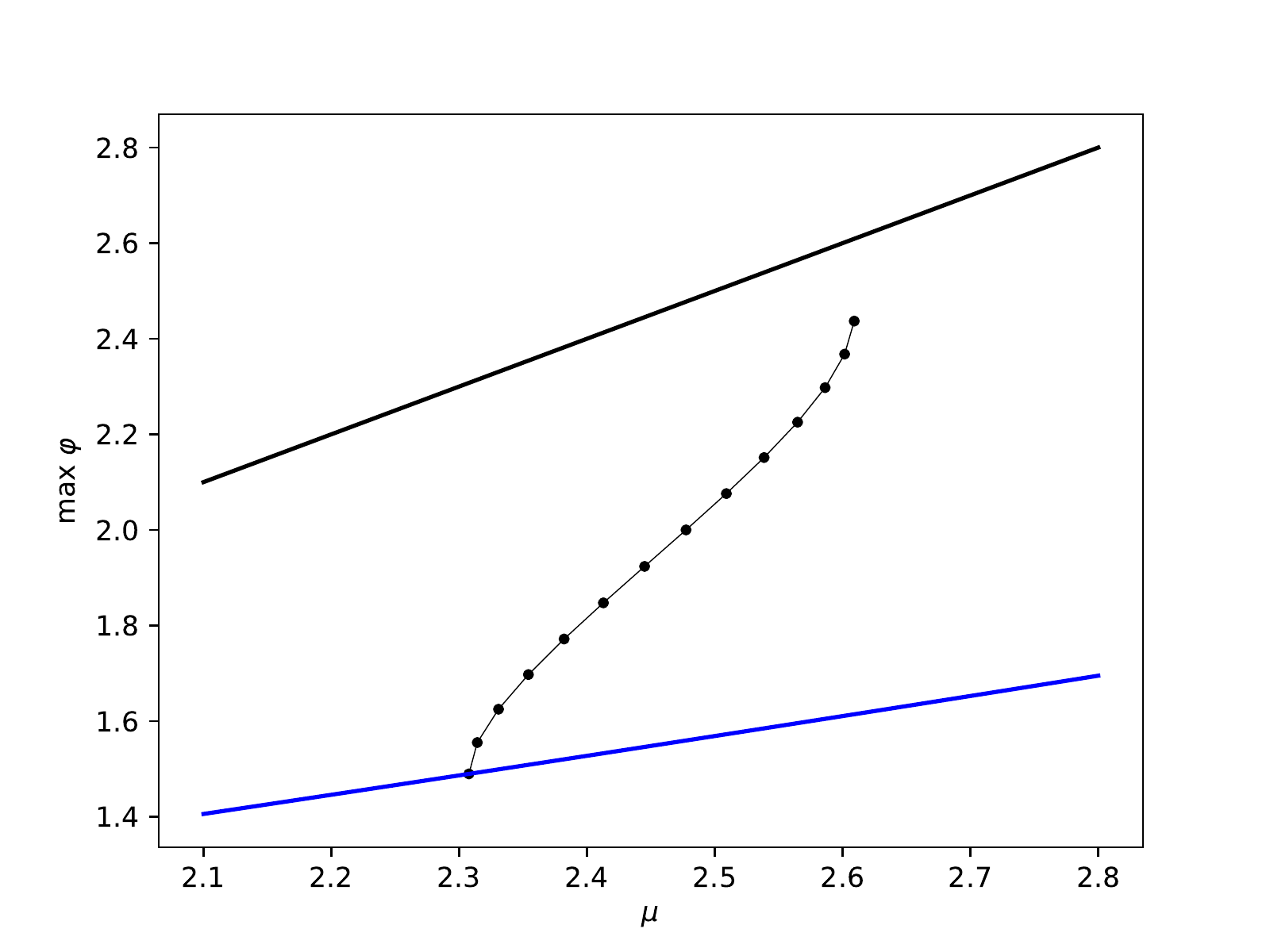}
            \caption{Numerical bifurcation branch}
            \label{fig:branch_fdp}
        \end{subfigure}

        \begin{subfigure}[b]{\textwidth}
            \centering
            \includegraphics[width=\textwidth]{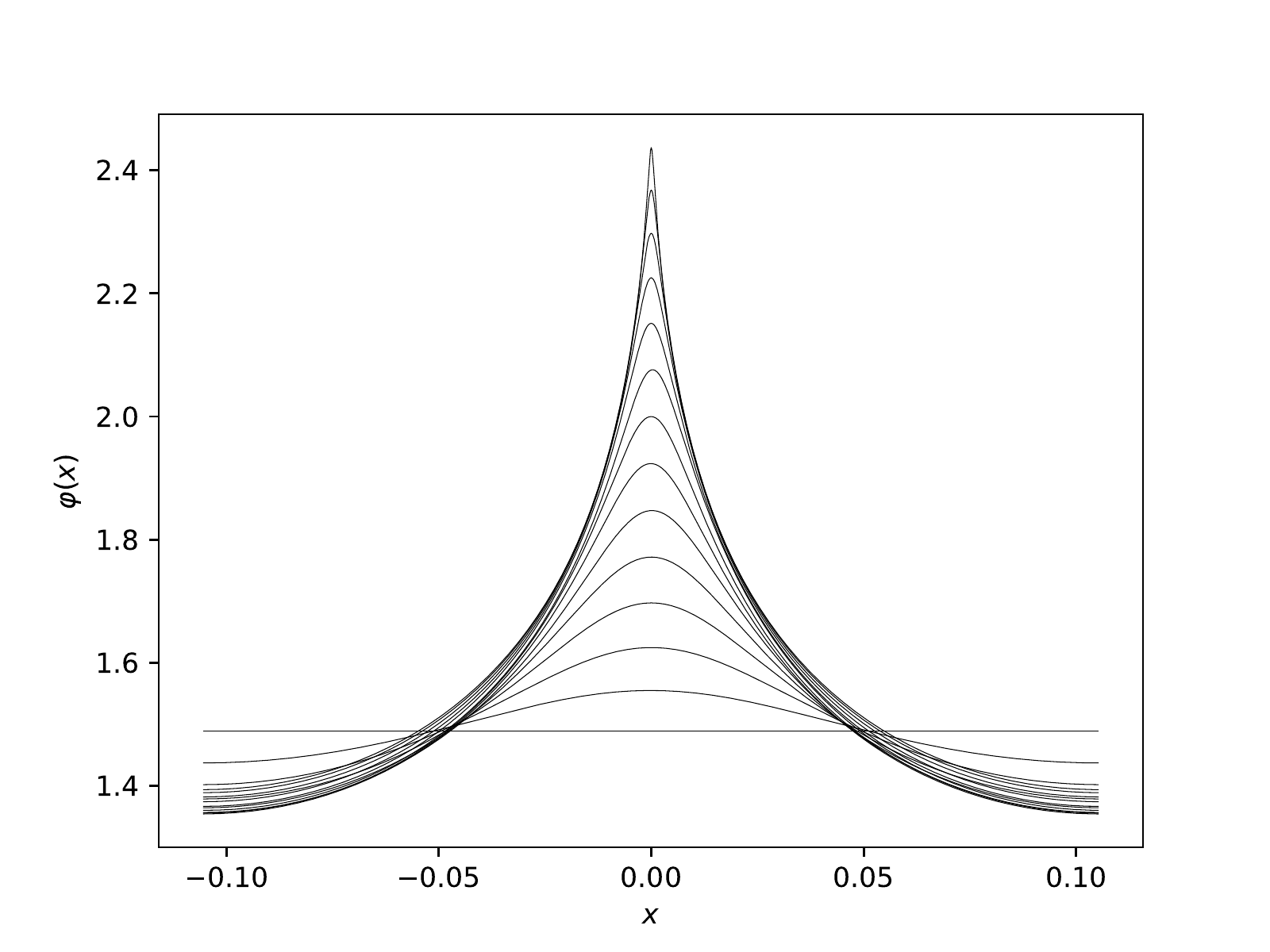}
            \caption{Traveling waves along numerical branch}
            \label{fig:waves_fdp}
        \end{subfigure}
    \end{subfigure}
    \caption{(a) Bifurcation diagram plotting $\max \varphi$ for $\wavespeed > 0$ and $\min \varphi$ for $\wavespeed < 0$ in accordance with the symmetry $(\varphi, \wavespeed) \mapsto (-\varphi(-\cdot), -\wavespeed)$ for the steady fDP equation. Bifurcation branches emanate from the curve $\constsolpos(\wavespeed)$ for $\wavespeed > \sqrt{\intconst}$, and there exist small periods such that local branches extend to a global curves which converges to a highest, cusped traveling wave with ${\varphi(0) = \wavespeed}$. The curves of constant solutions are otherwise locally unique. \mbox{(b)-(c)} Numerical example with $s = 0.5$.}
    \label{fig:bifurcation_diagram_FDP}
\end{figure}

Boundedness of the wave speed in the global bifurcation curve $\globalbifurcationcurvefDP = (\varphi(t), \wavespeed(t))_{t \geq 0}$ is the final ingredient which allows us to prove that there are highest waves for the fDP equation:

\begin{proof}[Proof of Theorem \ref{thm:alt_1_and_2_occur_fDP}]
	We claim that $\wavespeed(t)$ is strictly bounded from below by $\sqrt{\intconst}$. By contradiction, assume that there exists a sequence $(\varphi_n, \wavespeed_n)_n$ with $\wavespeed_n \rightarrow \sqrt{\intconst}$ as $n \rightarrow \infty$. According to Lemma~\ref{lemma:conv_subsequence_global_bifurcation_fDP} we can find a subsequence $(\varphi_n, \wavespeed_n)$ that converges to a solution $(\varphi_0, \wavespeed_0)$. For this subsequence we have
	\begin{equation*}
		\sqrt{\intconst} < \frac{\wavespeed_{n} + \sqrt{\wavespeed_{n}^2 + 8 \intconst}}{4} < \max \varphi_{n} < \wavespeed_n
	\end{equation*}
	owing to Proposition~\ref{prop:a_priori_inf_sup_fDP}. Passing to the limit yields $\max \varphi_0 = \sqrt{\intconst}$ which in turn gives $\max \bessel{-s} \varphi_0^2 = \intconst$. Since $\bessel{-s}$ is strictly monotone, this can only happen if $\varphi_0 \equiv \sqrt{\intconst}$, contradicting Proposition~\ref{prop:upper_regularity_bound_fDP}. Combined with Proposition \ref{prop:no_sols_large_speeds} this means that for every unbounded sequence $(t_n)_{n \in \mathbb{N}}$ of positive numbers, and for sufficiently small periods, there exists a subsequence of $(\varphi(t_n), \wavespeed(t_n))_{n \in \mathbb{N}}$ that converges to a solution $(\varphi, \wavespeed)$ to the steady fDP equation.
	
	Assume now that alternative (i) from Proposition~\ref{prop:global_bifurcation_fDP} occurs but not alternative (ii). Then bootstrapping \eqref{eq:global_bootstrap_fDP} yields $\varphi \in \holderspace{0, \alpha}$ for some $\alpha > \beta$, which is a contradiction. Conversely, assume that alternative (ii) occurs and that $\varphi(t_n)$ remains bounded in $\holderspace{0, \beta}(\R)$. Taking the limit of a subsequence in $C^{0, \beta'}(\R)$ for $s < \beta' < \beta$, the limit must be exactly $s$-Hölder regular at the crest by \eqref{eq:exact_sigma_holder_at_crest}, and we arrive at a contradiction to the boundedness of the sequence in $\holderspace{0, \beta}(\R)$. Consequently, both alternative (i) and (ii) in Proposition~\ref{prop:global_bifurcation_fDP} occur, and the limiting wave $\varphi$ must be even, $P$-periodic, strictly increasing on $(\nicefrac{-P}{2}, 0)$, and exactly \mbox{$s$-Hölder} continuous at $x \in P \mathbb{Z}$.
\end{proof}

\section*{Acknowledgements}
The author thanks M. Ehrnström for guidance and discussions, G. Brüell for suggesting the reformulation of the steady fDP equation to the form \eqref{eq:main_steady_fdp_linearized}, and the anonymous reviewer for pointing out mistakes in an earlier draft and for constructive feedback.

\end{document}